\DeclareMathAlphabet{\mathpzc}{OT1}{pzc}{m}{it}
\newtheorem{teo}{Theorem}
\newtheorem{cor}{Corollary}
\newtheorem{ttes}{Example}
\newcommand{\beq}{\begin{equation}}
\newcommand{\eeq}{\end{equation}}
\newcommand{\bear}{\begin{eqnarray}}
\newcommand{\eear}{\end{eqnarray}}
\newcommand{\longhookrightarrow}{\ensuremath{\lhook\joinrel\relbar\joinrel\rightarrow}}
\newcommand{\slanttwo}[2]{{\raisebox{.2em}{$#1$} \big/ \raisebox{-.2em}{$#2$}}}
 \newcommand{\virgolette}{``}
\newcommand{\set}[1]{\ensuremath{\mathbb{#1}}}
\newcommand{\slantone}[2]{{\raisebox{.1em}{$#1$}\left/\raisebox{-.1em}{$#2$}\right.}}
\newcommand*{\defeq}{\mathrel{\vcenter{\baselineskip0.5ex \lineskiplimit0pt
                     \hbox{\scriptsize.}\hbox{\scriptsize.}}}%
                     =}
\newcommand\asim{\mathrel{%
  \ooalign{\raise0.1ex\hbox{$\sim$}\cr\hidewidth\raise-0.8ex\hbox{\scalebox{0.9}{$\scriptstyle{x}$}}\hidewidth\cr}}}
\newcommand{\proj}[1]{\ensuremath{\mathbb{P}^{#1}}}
\newcommand{\acts}{\mathrel{\reflectbox{$\righttoleftarrow$}}}
\newcommand{\mani}{\ensuremath{\mathpzc{M}}}
\newcommand{\manir}{\ensuremath{\mathpzc{M}_{red}}}
\newcommand{\stsheafm}{\ensuremath{\mathcal{O}_{\mathpzc{M}}}}
\newcommand{\stsheafred}{\ensuremath{\mathcal{O}_{\mathpzc{M}_{red}}}}
\newcommand{\gsheaf}[2]{\ensuremath{\mathcal{#1}_{\mathpzc{#2}}}}
\newcommand{\smani}[1]{\ensuremath{\mathpzc{#1}}}
\begin{document}

\title{PROJECTIVE SUPERSPACES IN PRACTICE}


\author{SERGIO LUIGI CACCIATORI}
\address{Dipartimento di Scienza e Alta Tecnologia, Università dell'Insubria, Via Valleggio 11, 22100 Como, Italy}
\email{sergio.cacciatori@uninsubria.it}

\author{SIMONE NOJA}
\address{Dipartimento di Matematica, Università degli Studi di Milano, Via Saldini 50, 20133 Milano, Italy}
\email{simone.noja@unimi.it}



\begin{abstract} 

This paper is devoted to the study of supergeometry of complex projective superspaces $\mathbb{P}^{n|m}$. First, we provide formulas for the cohomology of invertible sheaves of the form $\mathcal{O}_{\mathbb{P}^{n|m}} (\ell)$, that are pull-back of ordinary invertible sheaves on the reduced variety $\mathbb{P}^n$. Next, by studying the even Picard group $\mbox{Pic}_0 (\mathbb{P}^{n|m})$, classifying invertible sheaves of rank $1|0$, we show that the sheaves $\mathcal{O}_{\mathbb {P}^{n|m}} (\ell)$ are \emph{not} the only invertible sheaves on $\proj{n|m}$, but there are also new genuinely supersymmetric invertible sheaves that are unipotent elements in the even Picard group. We study the $\Pi$-Picard group $\mbox{Pic}_\Pi (\mathbb{P}^{n|m})$, classifying $\Pi$-invertible sheaves of rank $1|1$, proving that there are also non-split $\Pi$-invertible sheaves on supercurves $\proj {1|m}$. Further, we investigate infinitesimal automorphisms and first order deformations of $\proj {n|m}$, by studying the cohomology of the tangent sheaf using a supersymmetric generalisation of the Euler exact sequence. A special special attention is paid to the meaningful case of supercurves $\mathbb{P}^{1|m}$ and of Calabi-Yau's $\mathbb{P}^{n|n+1}$. Last, with an eye to applications to physics, we show in full detail how to endow $\mathbb{P}^{1|2}$ with the structure of $\mathcal{N}=2$ super Riemann surface and we obtain its SUSY-preserving infinitesimal automorphisms from first principles, that prove to be the Lie superalgebra $\mathfrak{osp} (2|2)$. A particular effort has been devoted to keep the exposition as concrete and explicit as possible.

\end{abstract}

\maketitle

\tableofcontents

\section{Introduction}

\noindent The aim of this paper is to study the supergeometry of complex projective superspaces in some depth. Indeed, even if projective superspaces are considered well-understood supermanifolds - they are \emph{split} supermanifolds and different realisations are known (\cite{Cat1} \cite{Cat2} \cite{Manin} \cite{ManinNC}) - and they entered several well-known formal constructions in theoretical physics (see for example \cite{AgaVafa} \cite{BeiJia} \cite{Witten}), some of their geometric structures, constructions and properties have never been established on a solid basis and investigated in detail. In this paper we would like to fill this gap and give a complete a rigorous treatment of the subject.

After having reviewed the main definitions in the theory of supermanifolds and the various constructions of projective superspaces, we concentrate on invertible sheaves of rank $1|0$ on $\proj {n|m}$. We carry out a detailed computation of \v{C}ech cohomology of the sheaves of the form $\mathcal{O}_{\proj {n|m}} (\ell)$, thus providing a (partial) supersymmetric analog of the celebrated \emph{Bott formulas} for ordinary projective spaces.

Then, we study the \emph{even Picard group} of projective superspaces, $\mbox{Pic}_0\, (\proj {n|m})$, that classifies locally-free sheaves of rank $1|0$ over $\proj {n|m}$. In particular, we show that in the case of the supercurves $\proj {1|m}$ the even Picard group has a continuous part and we give the explicit form of its generators, proving that there exist genuinely supersymmetric invertible sheaves on $\proj {n|m}$ that do not come from any ordinary invertible sheaves $\mathcal{O}_{\proj n} (\ell)$ on $\proj n$. These prove to be non-trivial geometric objects, indeed they have in general a non-trivial cohomology, as we show by mean of an example. 

Further, we show that the case of the supercurves $\proj {1|m}$ proves to be special also when looking at the \emph{$\Pi$-invertible sheaves}, that is sheaves of rank $1|1$ with a certain \virgolette exchange symmetry'' between their even and odd part, called $\Pi$-symmetry. Indeed, when looking at $\Pi$-invertible sheaves on $\proj {1|m}$, one finds that for $m>2$, beside the usual \emph{split} $\Pi$-invertible sheaves of the kind $\mathcal{L} \oplus \Pi \mathcal{L}$, for $\mathcal{L}$ a locally-free sheaf of rank $1|0$ on $\proj {1|m}$, one gets \emph{non-split} $\Pi$-invertible sheaves as well, that cannot be presented in the split form.

Later, we study the cotangent sheaf on a generic, possibly non-projected, supermanifold, establishing the short exact sequences it fits into. We then specialise to the case of projected supermanifolds, proving that in this case the Berezinian sheaf - that plays a fundamental role in the theory of integration on supermanifolds - can be reconstructed by means of elements defined of the reduced variety, i.e. the canonical bundle of the reduced variety $\manir$ itself and the determinant of the fermionic sheaf, that is actually a sheaf of locally-free $\mathcal{O}_{\manir}$-modules. This allows to establish on rigorous basis a result that has been already used, especially in theoretical physics: $\mbox{Ber} (\proj {n|m}) \cong \mathcal{O}_{\proj {n|m}} (m-n-1).$ In particular, we call supermanifolds having trivial Berezinian sheaf - such as $\proj {n|n+1}$ for each $n\geq1$ - \emph{generalised Calabi-Yau supermanifolds} (henceforth Calabi-Yau supermanifolds, see \cite{CNR} and \cite{1DCY} for some issues related to this definition in supergeometry), by similarity with the ordinary setting. This is so because the Berezinian sheaf somehow plays the role of canonical sheaf on ordinary manifolds: indeed, as in the ordinary setting one integrates sections of canonical sheaf, in a supersymmetric setting one integrates sections of the Berezinian sheaf instead.     
 
Next, using a supersymmetric generalisation of Euler exact sequence, we study the cohomology of the tangent sheaf of $\proj {n|m}$, which is related to the \emph{infinitesimal automorphisms} and the \emph{first order deformations} of $\proj {n|m}$. In this context, we find that supercurves over $\proj 1$ yield again the richest scenario, allowing for many deformations as their odd dimension increases. 

Then, after we have dealt with the case of supercurves, the example of the Calabi-Yau supermanifold $\proj {1|2}$ is examined. In particular, we show in full details how to endow $\proj {1|2}$ with a structure of $\mathcal{N} = 2$ \emph{super Riemann surface}. In this context, we show how to recover from first principles the $\mathcal{N} =2$ SUSY-preserving automorphisms of $\proj {1|2}$ when structured as a $\mathcal{N}=2$ super Riemann surface. These SUSY-preserving automorphisms prove to be isomorphic to the Lie superalgebra $\mathfrak{osp}(2|2)$, we give a physically relevant presentation of, by exhibiting a particularly meaningful system of generators and displaying their structure equations.

Next we briefly address how to give the structure of $\mathcal{N} =2 $ semi-rigid Riemann surface to $\proj {1|2}$ by means of a certain \emph{twist}: this supermanifold is the basic ingredient in genus 0 \emph{topological string theory}, a topic that we will addressed from a purely supergeometric point of view in a forthcoming paper. To conclude, we study the supergeometry of the so-called twistorial Calabi-Yau $\proj {3|4}$, that enters many constructions in theoretical physics. A supersymmetric generalisation of the exponential exact sequence - we call it \emph{even} exponential exact sequence -, that is used throughout the paper is proved in Appendix A.\\

\noindent {\bf Acknowledgments:} the authors would like to thank Riccardo Re for several illuminating discussions and useful comments on the draft of this paper.

\section{General Definitions and Notations}

\noindent Before we start, we give some fundamental definitions in supergeometry and we fix some notations For a more thorough treatment of the general theory of supermanifolds, see for example \cite{Manin}.

First of all, on the algebraic side, we recall that a superalgebra is a $\mathbb{Z}_2$-graded algebra $A=A_0 \oplus A_1$, whose \emph{even} elements commute and whose \emph{odd} elements anti-commute. \\
The most basic example of complex supermanifold is the complex superspace $\mathbb{C}^{n|m}\defeq ( \mathbb{C}^{n} , \mathcal{O}_{\mathbb{C}^{n}} \otimes \bigwedge [\xi_1, \ldots, \xi_m])$, that is the ringed space whose underlying topological space is given by $\mathbb{C}^{n}$ endowed with the complex topology and whose structure sheaf is given by the sheaf of superalgebras obtained by tensoring the sheaf of holomorphic function on $\mathbb{C}^{n}$ with the Grassmann algebra generated by $m$ elements $\{ \xi_1, \ldots, \xi_m\}.$

More in general, in the same fashion of the ordinary theory of complex manifold, a \emph{complex supermanifold} of dimension $n|m$ is a locally ringed space $(|\mathpzc{M}|, \mathcal{O}_{\mathpzc{M}})$, for $|\mani|$ a topological space and $\stsheafm$ a sheaf of superalgebras, that it is \emph{locally} isomorphic to $\mathbb{C}^{n|m}$ (in the $\mathbb{Z}_2$-graded sense).  
Clearly, a morphism of supermanifolds $\varphi : \mani \rightarrow \mathpzc{N}$ is therefore a morphism of locally ringed space, that is a pair
\begin{align}
(\phi, \phi^\sharp ) : (|\mani| , \stsheafm) \longrightarrow (|\mathpzc{N}|, \mathcal{O}_{\mathpzc{N}} )
\end{align}
where $\phi : |\mani| \rightarrow |\mathpzc{N}|$ is a continuous map and $\phi^\sharp : \mathcal{O}_{\mathpzc{N}} \rightarrow \phi_* \stsheafm$ is a morphism of sheaves of superalgebras (thus respecting the $\mathbb{Z}_2$-grading). These are the kind of morphisms defining the local isomorphisms entering the definition of supermanifold given above.\\
To every supermanifold $\mani $ is attached its reduced manifold $\manir$, consisting, as a locally ringed space, of a pair $(|\mani|, \stsheafred):$ this is an ordinary manifold, whose structure sheaf $\stsheafred$ is given by the quotient $\slantone{\stsheafm}{\mathcal{J}_{\mani}}$, where $\mathcal{J}_\mani$ is the \emph{nilpotent sheaf}, actually a sheaf of ideals in $\stsheafm$ (looking at it as a usual classical scheme, $\manir$, is a reduced scheme, while $\mani$ is not). In other words, given any supermanifold $\mani$, this corresponds to the existence of a \emph{close embedding} as follows:  
\bear
\iota \defeq ( i , i^\sharp ) :  \left ( | \mani | , \slantone{\stsheafm}{\mathcal{J}_\mani} \right ) \rightarrow (| \mani |, \stsheafm).
\eear
Collectively, we will just denote it by $\iota: \manir \rightarrow \mani $. By the way, what really matter here, is that a supermanifold comes endowed with a sheaf morphism $i^\sharp : \stsheafm \rightarrow \stsheafred $ (notice that $i : |\mani | \rightarrow |\mani|$ is the identity, for the underlying topological space remains the same!). Being $\iota : \manir \rightarrow \mani $ a closed embdedding, we have that $i^\sharp : \stsheafm \rightarrow \stsheafred$ is surjective on the stalks, therefore this leads to the existence of the following short exact sequence of sheaves of $\stsheafm$-modules over $|\mani|$: 
\bear
\xymatrix@R=1.5pt{ 
0 \ar[rr] && \mathcal{J}_\mani \ar[rr] & &  \stsheafm \ar[rr] && \slantone{\stsheafm}{\mathcal{J}_\mani} \ar[rr] && 0,
 }
\eear
which we will call the \emph{structural exact sequence} for $\mani$. \\
The above short exact sequence does \emph{not} split in general and if it does the supermanifold $\mani$ is said to be \emph{projected}. Indeed, a splitting corresponds to the existence of a morphism of supermanifolds $\pi : \mani \rightarrow \manir$ given by
\bear
\pi \defeq (\,p , p^\sharp ) : (| \mani |, \stsheafm ) \longrightarrow (|\mani| , \stsheafred),
\eear
splitting the short exact sequence above: 
\bear
\xymatrix@R=1.5pt{ 
0 \ar[rr] && \mathcal{J}_\mani \ar[rr] & &  \stsheafm  \ar[rr]_{\iota} && \ar@{-->}@/_1.3pc/[ll]_{\pi} \slantone{\stsheafm}{\mathcal{J}_\mani} \ar[rr] && 0
 }
\eear
In case such a splitting ($\pi \circ \iota = id_{\manir}$) exists, one has that \emph{globally}, $\stsheafm = \stsheafred \oplus \mathcal{J}_{\mani}$ and the structure sheaf $\stsheafm$ becomes a sheaf of $\stsheafred$-module also, because: 
\bear
\xymatrix@R=1.5pt{ 
\left ( \stsheafred \otimes_{\gsheaf O M} \stsheafm \right ) (U) \ar[r] & \stsheafm (U) \nonumber \\
f \otimes s \ar@{|->}[r] &  \pi^\sharp_U (f) \cdot s.
}
\eear
Notice that in general the projection above does not exist: in this case the supermanifold is said \emph{non-projected} and its structure sheaf is \emph{not} a sheaf of $\stsheafred$-modules, see \cite{CNR} for more details and examples of non-projected supermanifolds over projective spaces. 

Given a supermanifold $\mani$ such that $\dim_{\mathbb{C}} \mani = n|m$, and its nilpotent sheaf $\mathcal{J}_\mani$, we can form a $\mathcal{J}_\mani$-\emph{adic filtration on} $\stsheafm$ of length $m$, that is
\bear
\mathcal{J}_\mani^0 \defeq \stsheafm \supset \mathcal{J}_\mani \supset \mathcal{J}_\mani^2 \supset \mathcal{J}_\mani^3 \supset \ldots \supset \mathcal{J}_\mani^{m} \supset \mathcal{J}_\mani^{m+1} = 0.
\eear
We call $\mbox{{Gr}}^{(i)} \stsheafm \defeq \slantone{\mathcal{J}_\mani^i}{\mathcal{J}_\mani^{i+1}}$ the $\mathcal{J}_\mani^i$-adic component of $\stsheafm$ and we define the following $\set{Z}_2$-graded sheaf
\bear
\mbox{{Gr}}\, \stsheafm \defeq \bigoplus_{i=0}^m \mbox{{Gr}}^{(i)}  \stsheafm = \stsheafred \oplus \slantone{\mathcal{J}_\mani}{\mathcal{J}^2_{\mani}} \oplus \ldots \oplus \slantone{\mathcal{J}_\mani^{m-1}}{\mathcal{J}^m_{\mani}} \oplus {\mathcal{J}^m_\mani}.
\eear
where the $\set{Z}_2$-grading is obtained by taking the obvious $\set{Z}$-grading $\mbox{\emph{mod}}\,2$. Then, we call the superspace $\mbox{{Gr}}\, \mani \defeq (|\mani|, \mbox{{Gr}}\, \stsheafm)$ the \emph{split supermanifold associated to} $\mani$. By definition, every supermanifold $\mani$ is \emph{locally} isomorphic to $\mbox{{Gr}}\, \mani$. We say that a supermanifold $\mani$ is \emph{split} if it is \emph{globally} isomorphic to $\mbox{{Gr}}\, \mani$.\\
Moreover, because of its importance, we reserve the name of \emph{fermionic sheaf} to the first component of the filtration above, $\mbox{{Gr}}^{(1)} \defeq \mathcal{J}_\mani / \mathcal{J}^2_\mani$ and we denote it with $\mathcal{F}_\mani.$ Notice this is a locally-free sheaf of $\stsheafred$-modules of rank $0|m$.\vspace{0.1cm}\\



\noindent For future use, we also briefly recall the notion of \emph{direct image sheaf} and \emph{inverse image sheaf}. \\
Given a continuous map $\phi : |\mani| \rightarrow |\mathpzc{N}|$ between two topological spaces and a sheaf $\mathcal{F}$ over $ | \mani |$, we say that the direct image sheaf of $\mathcal{F}$ is defined as
\begin{align}
|\mathpzc{N}| \owns V \longmapsto \phi_* \mathcal{F} (V) \defeq \mathcal{F} ( \phi^{-1} (V)). 
\end{align}
This is nothing but a (sort of) \emph{push-forward} operation, for it makes a sheaf over $|\mathpzc{N}|$ out of a sheaf over $|\mani|$. \\
The inverse image sheaf cannot be defined the same way, indeed given a sheaf $\mathcal{G}$ over $|\mathpzc{N}|$, it cannot be defined as $|\mathpzc{M}| \owns U \mapsto \phi^* G (U) = \mathcal{G} (\phi (U))$ for in general $\phi (U)$ is not an open set in $|\mathpzc{N}|$, therefore instead of $\phi (U)$ we take its \virgolette best approximation'' as open set and we define the inverse image sheaf as follows
\begin{align}
|\mani | \owns U \longmapsto \phi^{-1} \mathcal{G} (U) \defeq \widetilde{ \varinjlim_{ V \supseteq \phi (U)} \mathcal{G} (V)}.
\end{align}
Note that the above assignation defines a pre-sheaf, so we need to take its sheafification in order to have a sheaf over $|\mani|.$ This, instead, corresponds to a (sort of) \emph{pull-back} operation, indeed we get a sheaf over $|\mani|$ out of a one over $|\mathpzc{N}|.$ \\
Actually, the situation changes a bit when dealing with locally ringed space or schemes and sheaves over them. We therefore consider two locally ringed spaces or schemes $\mani \defeq (|\mathpzc{M}|, \mathcal{O}_{\mathpzc{M}})$ and $\mathpzc{N} \defeq (|\mathpzc{N}|, \mathcal{O}_{\mathpzc{N}} )$ together with a morphism $\varphi: \mani \rightarrow \mathpzc{N}$ between them. Given a sheaf $\mathcal{F}$ of $\mathcal{O}_\mani$-modules, we can take its \emph{push-forward} $\varphi_* \mathcal{F}$ as above: this defines a sheaf of $(\varphi_* \mathcal{O}_\mani)$-modules, but thanks to $\phi^\sharp : \mathcal{O}_{\mathpzc{N}} \rightarrow \phi_* \mathcal{O}_{\mani}$ this becomes naturally a sheaf of $\mathcal{O}_{\mathpzc{N}}$-modules. Indeed, taking $V \in |\mathpzc{N}|$ we have 
\bear
\xymatrix@R=1.5pt{ 
\phi^\sharp_V :  \gsheaf O N \ar[r]  & \gsheaf {O} {M}  (\phi^{-1} (V)) \\
s \ar@{|->}[r] &  \phi^\sharp_V (s)
}
\eear
and we can give a $\gsheaf {O} N$-module structure on $\varphi_* \mathcal{F}$ as follows
\begin{align}
(\gsheaf O N \otimes_{\phi_{\scriptscriptstyle \ast} \gsheaf O M }\varphi_* \mathcal{F}) ( V) & \longrightarrow \varphi_* \mathcal{F} (V) \\
s \otimes t & \longmapsto f^\sharp_V (s) \cdot t. 
\end{align}
Given a sheaf of $\gsheaf O N$-modules, instead, we can define its \emph{pull-back} sheaf as the sheaf of $\gsheaf O M$-modules defined as follows
\bear
U \longmapsto \varphi^{*} \mathcal{G} (U)\defeq (\gsheaf O M \otimes_{\phi^{-1} \gsheaf O N} \phi^{-1} \mathcal{G}) (U). 
\eear
Notice that here it is mandatory to take the tensor product with $\gsheaf O M$ in order to have that $\varphi^*\mathcal{G}$ is indeed a sheaf of $\gsheaf O M$-modules (otherwise it would just be a sheaf of $(\phi^{-1} \gsheaf O N)$-modules).

\section{Projective Superspaces and Cohomology of $\mathcal{O}_{\mathbb{P}^{n|m}} (\ell)$}

Given the usual complex superspace $
\mathbb{C}^{n+1|m} \defeq  ( \mathbb{C}^{n+1} , \mathcal{O}_{\mathbb{C}^{n+1}} \otimes \bigwedge [\xi_1, \ldots, \xi_m])$, one can form the superspace $(\mathbb{C}^{n+1|m})^\times$ simply by considering the obvious restriction of $\mathbb{C}^{n+1|m}$ to the open set $\mathbb{C}^\times \defeq \mathbb{C}^{n+1}\, \setminus \,\{0\}$.

As in \cite{Manin}, the (complex) projective superspace $\proj {n|m}$ is the supermanifold obtained as the quotient of $(\mathbb{C}^{n+1|m})^\times$ by the action $\mathbb{C}^\ast \acts (\mathbb{C}^{n+1|m})^\times$ defined as
\begin{align}
\lambda \cdot (x_0,\ldots,x_n,\xi_1,\ldots,\xi_m) \defeq (\lambda x_0,\ldots,\lambda x_n,\lambda\xi_1,\ldots,\lambda \xi_m)
\end{align}
where $\lambda $ is an element of the multiplicative group $\mathbb{C}^{\ast} \defeq \mathbb{C} \, \setminus \, \{ 0\}$.

Following again \cite{Manin}, we have that $\proj{n|m} $ is canonically isomorphic to $\mbox{Gr}\, \proj{n|m}$, thus it is a \emph{split} supermanifold. As we have seen, it follows that there exists a \emph{projection} $\pi : \proj {n |m} \rightarrow \proj {n}$, more precisely 
$
(id_{|\proj{n}|}, \pi^{\sharp}) : (\proj {n} , \mathcal{O}_{\proj {n|m}}) \rightarrow (\proj {n }, \mathcal{O}_{\proj n})
$
where $\pi^\sharp : \mathcal{O}_{\proj n} \rightarrow id_*\mathcal{O}_{\proj {n|m}}$ is a homomorphism of sheaves that embeds $\mathcal{O}_{\proj {n}}$ into $\mathcal{O}_{\proj {n|m}}$ and it endows $\mathcal{O}_{\proj{n|m}}$ with the structure of sheaf of $\mathcal{O}_{\proj n}$-modules. Given the projection $\pi : \proj {n|m} \rightarrow \proj {n}$, for the sake of precision, we should denote by $\pi_* \mathcal{O}_{\proj{n|m}}$ the structure sheaf of $\proj {n|m}$ when looked at as a sheaf of $\mathcal{O}_{\proj {n}}$-module. Since this specification does not make any difference for our purposes, we will simply keep writing $\mathcal{O}_{\proj {n|m}}$ instead of $\pi_* \mathcal{O}_{\proj{n|m}}$.\\
In particular, we have the splitting as $\mathcal{O}_{\proj n}$-modules 
\bear \label{fact}
\pi_*\mathcal{O}_{\proj {n|m}} = \bigoplus_{k = 0}^{\lfloor m/2 \rfloor} \mathcal{O}_{\proj n} (-2k )^{\oplus {m \choose 2k}} \oplus \Pi \bigoplus_{k = 0}^{\lfloor m/2 \rfloor - \delta_{0, {m {\mbox{\tiny{mod}}}2}}} \mathcal{O}_{\proj n} (-2k-1)^{\oplus { m \choose 2k+1}},
\eear
where we have inserted the symbol $\Pi$ as a reminding for the parity reversing.\\
The $\ell$-shifted sheaf $\pi_* \mathcal{O}_{\proj {n|m}} (\ell)$ therefore is given by
\bear
\pi_* \mathcal{O}_{\proj {n|m}} (\ell )= \bigoplus_{k = 0}^{\lfloor m/2 \rfloor} \mathcal{O}_{\proj n} (-2k + \ell )^{\oplus {m \choose {2k}}} \oplus \Pi \bigoplus_{k = 0}^{\lfloor m/2 \rfloor - \delta_{0, {m {\mbox{\tiny{mod}}}2}}} \mathcal{O}_{\proj n} (-2k-1 + \ell)^{\oplus { {m \choose {2k+1}}}}.
\eear
Thus, we can use the well-known results about the cohomology of $\mathcal{O}_{\proj n} (\ell)$ to compute the cohomology of $\pi_* \mathcal{O}_{\proj {n|m}} (\ell)$: by the way, here and in what follows, for the sake of brevity we will denote it simply as $\mathcal{O}_{\proj {n|m}} (\ell)$ since, again, due to the projected/split property it does not make any difference for our ends. We recall that 
\bear
h^0 ( \mathcal{O}_{\proj n} (\ell )) = {\ell+n \choose \ell} \qquad \quad h^n ( \mathcal{O}_{\proj n} (\ell )) = {|\ell| -1 \choose |\ell | - n -1 }
\eear
where $\ell \geq 0 $ in the first equality and $\ell < 0 $ and $|\ell| \geq n+1$ in the second equality.\\
It is an easy consequence of the previous decomposition that when dealing with $\proj {n|\star}$ we will only have $0$-th and $n$-th cohomology, while all the other cases will be zero. \\
Before we start, we also observe that we will consider together the \emph{even} and \emph{odd} dimensions of the cohomology, by looking at the sheaf of $\mathcal{O}_{\proj n}$-modules above simply as
\bear
\mathcal{O}_{\proj {n|m}} (\ell )= \bigoplus_{k = 0}^{m} \mathcal{O}_{\proj n} (-k + \ell)^{\oplus {m \choose {k}}}, 
\eear  
this will make the combinatorics easier. It will be nice and useful to separate even and odd dimension at some point. \\
We start considering the $0$-th cohomology of $\mathcal{O}_{\proj {n|m}} (\ell)$. We have to deal with two cases: when $m < \ell$, and therefore all the bits in the decomposition are contributing, and when $m \geq \ell$ and therefore just the first $\ell$ contribute.
\begin{itemize}
\item ${m < \ell}:$ in this case we sum over all the contributions:
\begin{align}
h^0 (\mathcal{O}_{\proj{n|m}} (\ell)) &=  \sum_{k = 0}^{m} {m \choose k}{\ell - k + n \choose \ell -k } = \sum_{k = 0}^m \frac{m! (\ell -k + n)!}{(m-k)! k! (\ell - k )! n!} \nonumber \\
& =  \frac{1}{n!} \sum_{k = 0}^m {m \choose k} \frac{(\ell - k + n)!}{(\ell-k)!} = \frac{1}{n!} \sum_{k = 0}^m {m \choose k} (\ell - k + n)\cdot \ldots \cdot (\ell-k +1)  \nonumber \\
& = \frac{1}{n!} \sum_{k = 0}^m {m \choose k}  \left [ \frac{d^n}{dx^n} x^{\ell - k + n} \right ]_{x = 1} = \frac{1}{n!} \frac{d^n}{dx^n} \left [ x^{\ell +n } \left (1 + \frac{1}{x}\right )^{m}  \right  ]_{x=1}
 \nonumber \\
& = \frac{1}{n!} \frac{d^n}{dx^n} \left [ (x+1)^{\ell +n  - m } (x+2)^m \right ]_{x = 0 } 
\end{align}
\item $m \geq \ell:$ in this case we only sum over the first $\ell$ contributions:
\begin{align}
h^0 (\mathcal{O}_{\proj{n|m}} (\ell)) & = \sum_{k = 0}^{\ell} {m \choose k}{\ell - k + n \choose \ell -k } = \sum_{k = 0}^\ell \frac{m! (\ell -k + n)!}{(m-k)! k! (\ell - k )! n!} \nonumber \\
& =  \frac{m!}{n! \, \ell !} \sum_{k = 0}^\ell {\ell \choose k} \frac{(\ell - k + n)!}{(m-k)!} = \frac{m!}{n! \, \ell !} \sum_{k = 0}^\ell {\ell \choose k} (\ell - k + n) \cdot \ldots \cdot ( m - k +1)  \nonumber \\
& = \frac{m!}{n! \, \ell !} \sum_{k = 0}^\ell {\ell \choose k}  \left [ \frac{d^{\ell + n- m} }{dx^{\ell + n- m}} x^{\ell -k  + n } \right ]_{x = 1} =  \frac{m!}{n! \, \ell !} \frac{d^{\ell + n - m}}{dx^{\ell +n -m}} \left [ x^{\ell +n} \left (1 + \frac{1}{x}\right )^{\ell}  \right  ]_{x=1}
 \nonumber \\
& = \frac{m!}{n! \, \ell !} \frac{d^{\ell + n -m}}{dx^{\ell + n - m}} \left [ (x+1)^{n } (x+2)^\ell \right ]_{x = 0 } 
\end{align}
\end{itemize}
We now keep our attention on the contribution given by the $n$-th cohomology. Again, one needs to distinguish between two cases: namely, when $\ell + n + 1 \leq 0 $ we have that all the bits in the decomposition contribute to the cohomology, while if $\ell + n + 1 > 0$ we find that the only bits contributing are the ones having $k \geq \ell + n + 1$.  
\begin{itemize}
\item $\ell + n + 1 \leq 0:$ in this case we sum over all the contributions:
\begin{align}
h^n (\mathcal{O}_{\proj {n|m}} (\ell)) & = \sum_{k = 0}^m {m \choose k } {{k - \ell -1} \choose {k- \ell - n - 1}} = \sum_{k = 0}^m \frac{m! (k- \ell -1)!}{(m-k)! k! n! (k- \ell - n -1)!} \nonumber \\ 
& = \frac{1}{n!} \sum_{k = 0}^m {m \choose k } \frac{(k- \ell -1)!}{(k- \ell - n - 1)!} = \frac{1}{n!} \sum_{k = 0}^m {m \choose k } (k- \ell - 1) \cdot \ldots \cdot (k - \ell - n ) \nonumber \\
& = \frac{1}{n!} \sum_{k = 0}^m {m \choose k } \left [ \frac{d^n}{dx^n} x^{k+ |\ell | -1}  \right ]_{x=1} = \frac{1}{n!} \left [ \frac{d^n}{dx^n} x^{ |\ell | -1} (1+x)^{m}  \right ]_{x=1}  \nonumber \\
& = \frac{1}{n!} \left [ \frac{d^n}{dx^n} (x +1)^{ |\ell | -1} (x+2)^{m}  \right ]_{x=0}
\end{align} 
Actually, this holds in the case $1 \leq |\ell | \leq n$. In the special sub-case $\ell = -1$, one finds:
\begin{align}
h^n (\mathcal{O}_{\proj {n|m}} (-1)) & = \frac{1}{n!} \left [ \frac{d^n}{dx^n} (x+2)^{m}\right ] = \frac{1}{n!}m \cdot (m-1)\cdot  \ldots \cdot (m-n + 1) \cdot 2^{m-n} \nonumber \\
& = {m \choose n} \cdot 2^{m-n} 
\end{align} 
\item $\ell + n + 1 > 0 :$ in this case, the first contribution comes at $k = \ell + n + 1$:
\begin{align}
h^n (\mathcal{O}_{\proj {n|m}} (\ell)) & = \sum_{k = \ell + n + 1}^m {m \choose k } {{k - \ell -1} \choose {k- \ell - n - 1}} =  \frac{1}{n!} \sum_{k = \ell  + 1}^m {m \choose k } \left [ \frac{d^n}{dx^n} x^{k- \ell  -1}  \right ]_{x=1} \nonumber \\ 
= & \frac{1}{n!} \left [ \frac{d^n}{dx^n}  \frac{1}{x^{\ell + 1}} \left ( (x+1)^m - \sum_{k = 0 }^{\ell} {\ell \choose k } x^k \right )  \right ]_{x= 1}\nonumber \\
= &  \frac{1}{n!} \left [ \frac{d^n}{dx^n}  \frac{1}{(x+1)^{\ell + 1}} \left ( (x+2)^m - \sum_{k = 0 }^{\ell} {\ell \choose k } (x+1)^k \right )  \right ]_{x= 0}
\end{align} 
where we stress that we have changed the sum from $\ell + n + 1$ to $\ell + 1$ for the derivative kills the relative terms, which therefore do not give contribution. Also, this holds for $k \geq \ell + n + 1 \leq m.$
\end{itemize}
For the sake of notation we introduce the following definitions: 
\bear
\chi_{m<\ell} (n | m ; \ell) \defeq \frac{1}{n!} \frac{d^n}{dx^n} \left [ (x+1)^{\ell +n  - m } (x+2)^m \right ]_{x = 0 } 
\eear
\bear
\chi_{m \geq \ell } (n | m ; \ell ) \defeq \frac{m!}{n! \, \ell !} \frac{d^{\ell + n -m}}{dx^{\ell + n - m}} \left [ (x+1)^{n } (x+2)^\ell \right ]_{x = 0 }
\eear
\bear
\zeta_{\ell + n + 1 \leq 0} ( n |m ; \ell) \defeq \frac{1}{n!} \left [ \frac{d^n}{dx^n} (x +1)^{ |\ell | -1} (x+2)^{m}  \right ]_{x=0}
\eear
\bear
\zeta_{\ell + n +1 >0} (n |m ; \ell )\defeq \frac{1}{n!} \left [ \frac{d^n}{dx^n}  \frac{1}{(x+1)^{\ell + 1}} \left ( (x+2)^m - \sum_{k = 0 }^{\ell} {\ell \choose k } (x+1)^k \right )  \right ]_{x= 0}
\eear
In conclusion, we have thus proved the following theorem.
\begin{teo} \label{dimensions} Let $\mathcal{O}_{\proj {n |m} } (\ell)$ be the sheaf of $\mathcal{O}_{\proj {n|m}}$-modules as above. Then one has the following dimensions in cohomology:  
\bear
h^i (\mathcal{O}_{\proj {n|m}} (\ell)) = \left \{ \begin{array}{llll} 
\chi_{m < \ell } ( n |m ; \ell)& & \quad & i = 0, \, m < \ell \\  
\chi_{m \geq \ell } ( n |m ; \ell) & & \quad & i = 0, \, m \geq \ell \\ 
\zeta_{\ell + n + 1 \leq 0} ( n |m ; \ell) & & \quad & i = n,\, \ell + n + 1 \leq 0 \\
\zeta_{\ell + n +1 >0} (n |m ; \ell )& &  \quad & i = n, \, \ell + n + 1 > 0 \\
\end{array} 
\right. 
\eear
All the other cohomologies are null.
\end{teo}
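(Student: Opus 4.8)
The plan is to reduce the whole statement to the classical Bott formulas on $\proj n$, taking advantage of the fact that $\proj{n|m}$ is split. First I would invoke the splitting (\ref{fact}): after the $\ell$-shift and after grouping the even and odd summands together as in the text, the structure sheaf becomes the finite direct sum $\mathcal{O}_{\proj{n|m}}(\ell) \cong \bigoplus_{k=0}^{m} \mathcal{O}_{\proj n}(-k+\ell)^{\oplus \binom{m}{k}}$ of ordinary invertible sheaves on $\proj n$. Since \v{C}ech cohomology is additive over finite direct sums, this at once gives $h^i(\mathcal{O}_{\proj{n|m}}(\ell)) = \sum_{k=0}^{m} \binom{m}{k}\, h^i(\mathcal{O}_{\proj n}(-k+\ell))$, so that the entire problem is transferred to $\proj n$.

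Next I would recall that $\mathcal{O}_{\proj n}(d)$ has nonvanishing cohomology only in degrees $0$ and $n$; consequently every intermediate $h^i$ vanishes termwise, which disposes of the final clause that all other cohomologies are null and leaves only the cases $i=0$ and $i=n$ to be computed. For $i=0$ I substitute $h^0(\mathcal{O}_{\proj n}(d)) = \binom{d+n}{d}$, valid for $d\geq 0$; the summand indexed by $k$ therefore contributes precisely when $\ell-k\geq 0$, i.e. $k\leq \ell$. This forces the dichotomy between $m<\ell$ (all $m+1$ summands survive) and $m\geq\ell$ (only $k=0,\dots,\ell$ survive), producing $\chi_{m<\ell}$ and $\chi_{m\geq\ell}$ respectively. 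Symmetrically, for $i=n$ I substitute $h^n(\mathcal{O}_{\proj n}(d)) = \binom{|d|-1}{|d|-n-1}$, valid for $d<0$ with $|d|\geq n+1$; the summand indexed by $k$ now contributes exactly when $-k+\ell\leq -(n+1)$, i.e. $k\geq \ell+n+1$, which splits into $\ell+n+1\leq 0$ (full sum) and $\ell+n+1>0$ (sum truncated from below), yielding $\zeta_{\ell+n+1\leq 0}$ and $\zeta_{\ell+n+1>0}$.

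The decisive computational device in each of the four regimes is the generating-function identity that converts the binomial sums into derivatives of polynomials. Concretely, one rewrites a ratio of factorials such as $(\ell-k+n)!/(\ell-k)!$ as the falling factorial $\big[\tfrac{d^n}{dx^n}x^{\ell-k+n}\big]_{x=1}$, sums the binomial series to a closed form like $x^{\ell+n}(1+\tfrac1x)^m = x^{\ell+n-m}(x+1)^m$, and finally performs the substitution $x\mapsto x+1$ so as to evaluate the derivative at $x=0$; the analogous manipulation with $x^{|\ell|-1}(1+x)^m$ handles the $h^n$ computations. After this step the closed forms are exactly the definitions of $\chi$ and $\zeta$, and the theorem follows by collecting the four cases.

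The genuine difficulty is bookkeeping rather than conceptual. The main obstacle will be pinning down the summation ranges correctly in each regime: in particular, remembering that the $n$-th derivative annihilates the lowest-order terms (this is what licenses shifting the lower limit from $\ell+n+1$ to $\ell+1$ in the $\zeta_{\ell+n+1>0}$ case, where one subtracts the partial binomial expansion $\sum_{k=0}^{\ell}\binom{\ell}{k}(x+1)^k$), and keeping the constraint $\ell+n+1\leq m$ in view so that the truncated sum is nonempty. Once the generating function is in closed form, evaluating its $n$-th derivative at $x=0$ is entirely mechanical, so no further obstruction arises.
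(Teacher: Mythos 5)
Your proposal is correct and follows essentially the same route as the paper: the splitting \eqref{fact} reducing everything to $\bigoplus_{k=0}^{m}\mathcal{O}_{\proj n}(-k+\ell)^{\oplus\binom{m}{k}}$, termwise use of the classical $h^0$ and $h^n$ formulas on $\proj n$ with exactly the same four-regime case split, and the same falling-factorial/derivative generating-function device (including the shift of the lower summation limit from $\ell+n+1$ to $\ell+1$ justified by the vanishing of low-order terms under $d^n/dx^n$). The paper's argument is precisely this computation carried out in the paragraphs preceding the theorem, so nothing further is needed.
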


\section{Even Picard Group and $\Pi$-Picard Group}

\noindent The \emph{even} invertible sheaves, that is the locally-free sheaves of $\mathcal{O}_\mani$-modules of rank $1|0$ are classified by the so-called \emph{even} Picard group $\mbox{Pic}_0 (\mani)$, that can be proved to be such that $\mbox{Pic}_0 (\mani) \cong H^1 (\mathcal{O}^\ast_{\mani,0})$ (where $\mathcal{O}^\ast_{\mani, 0}$ is a sheaf of \emph{abelian} group), as one might easily get by similarity with the ordinary case. Actually, the above theorem gives the cohomology of \emph{all} possible even invertible sheaves on $\proj {n|m}$, for $n \geq 2$ - in other words all of the even invertible sheaves are of the form $\mathcal{O}_{\proj {n|m}} (\ell)$ for some $\ell$ in the case $n\geq 2$. On the contrary, the situation is very different for supercurves of the kind $\proj {1|m}$, as we prove in the following
\begin{teo}[Even Picard Group for $\proj {n|m}$] The even Picard group of the projective superspace $\proj {n|m}$ is given by 
\bear
\mbox{\emph{Pic}}_0 (\proj {n|m}) \cong \left \{ \begin{array}{lrrl}
\mathbb{Z}\oplus \mathbb{C}^{\{2^{m-2} (m-2) +1\}} & & & n=1, \; m\geq 2 \\
\mathbb{Z} & & & \mbox{else}
\end{array}
\right.
\eear
\end{teo}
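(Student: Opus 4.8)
The plan is to compute $\mbox{Pic}_0 (\proj {n|m}) \cong H^1(\mathcal{O}^\ast_{\proj {n|m},0})$ by reducing the multiplicative sheaf of \emph{even units} to additive cohomological data supported on the reduced space $\proj n$. First I would observe that an even section $f$ of $\stsheafm$ is invertible precisely when its reduction $f_{red}\in\stsheafred$ is nowhere vanishing; writing $f = f_{red}\,(1+\nu)$ with $\nu$ an even section of the nilpotent ideal $\mathcal{J}_\mani$, one factors every even unit as a reduced unit times a \emph{unipotent} unit. Since $\proj {n|m}$ is split, the projection $\pi$ furnishes a section of the reduction map, so this factorisation upgrades to a \emph{split} short exact sequence of sheaves of abelian groups
\begin{equation}
1 \longrightarrow (1 + \mathcal{J}_{\mani,0}) \longrightarrow \mathcal{O}^\ast_{\proj {n|m},0} \longrightarrow \mathcal{O}^\ast_{\proj n} \longrightarrow 1,
\end{equation}
where $\mathcal{J}_{\mani,0}$ denotes the even part of the nilpotent ideal.

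Next, I would invoke the even exponential exact sequence of Appendix A: because $\mathcal{J}_{\mani,0}$ consists of nilpotent sections, the truncated logarithm $1+\nu \mapsto \log(1+\nu)$ is a well-defined isomorphism of sheaves of abelian groups between the multiplicative sheaf $1+\mathcal{J}_{\mani,0}$ and the additive sheaf $\mathcal{J}_{\mani,0}$. Passing to cohomology in the split sequence above and using $H^1(\mathcal{O}^\ast_{\proj n}) = \mbox{Pic}(\proj n) = \mathbb{Z}$ then yields
\begin{equation}
\mbox{Pic}_0 (\proj {n|m}) \cong \mathbb{Z} \oplus H^1(\mathcal{J}_{\mani,0}),
\end{equation}
the splitting guaranteeing that no connecting homomorphism or $H^2$ term can intervene.

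It remains to compute $H^1(\mathcal{J}_{\mani,0})$ as ordinary sheaf cohomology on $\proj n$. Reading off the even, strictly nilpotent part of the decomposition \eqref{fact}, I would write $\mathcal{J}_{\mani,0} \cong \bigoplus_{k=1}^{\lfloor m/2\rfloor}\mathcal{O}_{\proj n}(-2k)^{\oplus\binom{m}{2k}}$, so that $H^1(\mathcal{J}_{\mani,0}) = \bigoplus_{k=1}^{\lfloor m/2\rfloor} H^1(\mathcal{O}_{\proj n}(-2k))^{\oplus\binom{m}{2k}}$. For $n\geq 2$ every summand vanishes, since $H^1(\mathcal{O}_{\proj n}(\ell)) = 0$, leaving $\mbox{Pic}_0 \cong \mathbb{Z}$ and covering the "else" case (including $n=1,\,m\le 1$, where there are no even nilpotents at all). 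For $n=1$ I would instead use $h^1(\mathcal{O}_{\proj 1}(-2k)) = 2k-1$, reducing the claim to the combinatorial identity
\begin{equation}
\sum_{k=1}^{\lfloor m/2\rfloor} (2k-1)\binom{m}{2k} = 2^{m-2}(m-2) + 1,
\end{equation}
which I would split as $\sum_{k\ge 1} 2k\binom{m}{2k} - \sum_{k\ge 1}\binom{m}{2k}$ and evaluate from the classical sums $\sum_{j\text{ even}} j\binom{m}{j} = m\,2^{m-2}$ and $\sum_{j\text{ even}}\binom{m}{j} = 2^{m-1}$, giving $m\,2^{m-2} - (2^{m-1}-1) = (m-2)2^{m-2}+1$.

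The main obstacle is conceptual rather than computational: establishing that the factorisation of even units genuinely produces a \emph{split} short exact sequence of \emph{abelian} sheaves, so that the even exponential sequence applies and $H^1$ decomposes as a direct sum rather than only fitting into a long exact sequence. Once this is secured, the identification $H^1(1+\mathcal{J}_{\mani,0}) \cong H^1(\mathcal{J}_{\mani,0})$, the vanishing and dimension counts on $\proj n$, and the final combinatorial identity are all routine.
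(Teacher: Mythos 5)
Your proof is correct, but it reaches the result by a different mechanism than the paper. The paper applies the even exponential sequence $0 \to \mathbb{Z}_\mani \to \mathcal{O}_{\mani,0} \to \mathcal{O}^\ast_{\mani,0} \to 0$ directly on $\proj{n|m}$ and runs the long exact cohomology sequence: for $n \geq 2$ the vanishing of $H^1(\mathcal{O}_{\proj{n|m},0})$ and $H^2(\mathcal{O}_{\proj{n|m},0})$ pins $\mbox{Pic}_0$ against $H^2(\mathbb{Z}_{\proj n}) \cong \mathbb{Z}$, while for $n=1$ one gets the extension $0 \to H^1(\mathcal{O}_{\proj{1|m},0}) \to \mbox{Pic}_0(\proj{1|m}) \to \mathbb{Z} \to 0$ and then computes $h^1(\mathcal{O}_{\proj{1|m},0}) = \sum_{k\geq 1} (2k-1)\binom{m}{2k}$ exactly as you do. You instead exploit the split structure multiplicatively: the projection $\pi$ gives a sheaf-level section of $\mathcal{O}^\ast_{\proj{n|m},0} \to \mathcal{O}^\ast_{\proj n}$, so the even units decompose as a direct product $\mathcal{O}^\ast_{\proj n} \times (1+\mathcal{J}_{\mani,0})$ of abelian sheaves (your splitting argument is sound: all groups involved are abelian, $\pi^\sharp$ is multiplicative, and the factorisation $f = \pi^\sharp(f_{red})(1+\nu)$ is unique), and the truncated logarithm — a polynomial map, since $\mathcal{J}_\mani^{m+1}=0$ — identifies $1+\mathcal{J}_{\mani,0}$ with the additive sheaf $\mathcal{J}_{\mani,0}$. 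Your route buys two things: it gives the direct-sum structure of $\mbox{Pic}_0$ for free, whereas the paper's $n=1$ extension splits only because $\mathbb{Z}$ is free abelian (a point the paper passes over silently), and it makes structurally visible that the continuous part consists of unipotent classes, which is precisely what the paper's next theorem exhibits via transition functions of the form $1 + \sum c_\ell^{[\underline{s}]}\psi^{[\underline{s}]}/w^\ell$. It also replaces the transcendental exponential by purely algebraic nilpotent calculus. The trade-off is that your argument leans essentially on the existence of the projection $\pi$, i.e.\ on $\proj{n|m}$ being split, while the exponential-sequence method in principle applies to non-projected supermanifolds as well. One small mislabel: what you invoke is not really the even exponential sequence of Appendix A but the elementary truncated $\log$/$\exp$ isomorphism on the unipotent subsheaf — the statement you actually use is correct and self-contained, so nothing breaks. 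Your dimension count $h^1(\mathcal{O}_{\proj 1}(-2k)) = 2k-1$ and the identity $m\,2^{m-2} - (2^{m-1}-1) = (m-2)2^{m-2}+1$ agree with the paper's computation, and your handling of the edge cases $n=1$, $m \leq 1$ (where $\mathcal{J}_{\mani,0} = 0$) is correct.
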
  
\begin{proof} The main tool to be used in order to compute the even Picard group is the \emph{even} exponential short exact sequence, that is
\bear
\xymatrix@R=1.5pt{ 
0 \ar[rr] && \mathbb{Z}_\mani \ar[rr] &&  \mathcal{O}_{\mani,0} \ar[rr] && \mathcal{O}^\ast_{\mani,0} \ar[rr] && 0.
}
\eear
where $\mathcal{O}_{\mani,0}$ is the even part of structure sheaf of $\mani,$ and likewise for $\mathcal{O}_{\mani, 0}^\ast$. We now consider separately the case $n\geq 3$, $n=2$ and $n=1$.
\begin{itemize}
\item[$n\geq 2$] This is the easiest case, as one has $H^i(\mathcal{O}_{\proj {n|m},0}) =0$ for $i =1,2.$ So the part of the long exact cohomology sequence we are interested into reduces to 
\bear
\xymatrix@R=1.5pt{ 
0 \ar[rr] && \mbox{Pic}_0 (\proj {n|m}) \ar[rr] &&  H^2 (\mathbb{Z}_{\proj n}) \cong \mathbb{Z} \ar[rr] && 0, \nonumber
}
\eear
so that one has $\mbox{Pic}_0 (\proj {n|m}) \cong \mathbb{Z}.$
\item[$n = 2$] The long exact cohomology sequence reduces to 
\bear
\xymatrix@R=1.5pt{ 
0 \ar[r] & \mbox{Pic}_0 (\proj {2|m}) \ar[r] &  H^2 (\mathbb{Z}_{\proj 2}) \cong \mathbb{Z}  \ar[r] & H^2 (\mathcal{O}_{\proj {2|m},0}) \ar[r] & H^2 (\mathcal{O}^\ast_{\proj {2|m},0}) \ar[r] & 0, \nonumber
}
\eear
this splits to give $\mbox{Pic}_0 (\proj {2|m}) \cong \mathbb{Z}$ and $H^2 (\mathcal{O}_{\proj {2|m},0}) \cong H^2 (\mathcal{O}^\ast_{\proj {2|m},0})$.
\item[$n=1$] This is the richest case, as one finds
\bear
\xymatrix@R=1.5pt{ 
0 \ar[r] &   H^1 (\mathcal{O}_{\proj {1|m},0}) \ar[r] & \mbox{Pic}_0 (\proj {1|m}) \ar[r] & H^2 (\mathbb{Z}_{\proj 1})\cong \mathbb{Z} \ar[r] & 0, \nonumber
}
\eear
computing the dimension of $H^1 (\mathcal{O}_{\proj {1|m},0})$, one has 
\bear
h^1 (\mathcal{O}_{\proj {1|m},0}) = \sum_{k= 1}^{\lfloor m/2 \rfloor} {m \choose 2k}  (2k-1) = 2^{m-2}(m-2) +1
\eear
Indeed, one can observe that
\bear \nonumber
\sum_{k= 1}^{\lfloor m/2 \rfloor} {m \choose 2k}  (2k-1) = - \sum_{k = 1}^{\lfloor m/2 \rfloor} {m \choose 2k} + \sum_{k = 1}^{\lfloor m/2 \rfloor} 2k {m \choose 2k} 
\eear
and using that 
\bear \nonumber
(1 + \epsilon x)^m = \sum_{k = 0}^m \epsilon^k x^k {m \choose j} \quad \rightsquigarrow \quad x \frac{d}{dx} \left ( 1 + \epsilon x \right )^m = \sum^{m}_{k = 0} k \epsilon^k x^k {m \choose k}
\eear 
so that for $m \geq 2$ one might write
\bear \nonumber
x \frac{d}{dx}\left [ (1 + x )^m + (1-x)^m \right ] = 2 \cdot \sum_{k = 1}^{\lfloor m/2 \rfloor} 2k x^k {m \choose 2k }. 
\eear
At $x = 1$, the sums yields
\bear \nonumber
m 2^{m-2} = \sum_{k = 1}^{\lfloor m/2 \rfloor} 2k {m \choose 2k }
\eear \nonumber 
Putting the two bits together one finds
\begin{align}
- \sum_{k = 1}^{\lfloor m/2 \rfloor} {m \choose 2k} + \sum_{k = 1}^{\lfloor m/2 \rfloor} 2k {m \choose 2k} = (- 2^{m-1} +1) + (m 2^{m-2}) = (m-2)2^{m-2} +1.
\end{align}
So that the conclusion follows, 
\bear
\mbox{Pic}_0 (\proj {1|m}) \cong \mathbb{Z}\oplus \mathbb{C}^{\{2^{m-2} (m-2) +1\}} 
\eear
which concludes the proof.
\end{itemize}
\end{proof}
\noindent The previous theorem tells us that all of the invertible sheaves on $\proj {n|m}$ for $n \geq 2$ are of the form $\mathcal{O}_{\proj {n|m} } (\ell)$. In other words, we can say that all of the invertible sheaves on $\proj {n|m}$ for $n \geq 2$ are the pull-backs via the projection $\pi : \proj {n|m} \rightarrow \proj n$ of the invertible sheaves $\mathcal{O}_{\proj n} (\ell)$ on $\proj n.$ This is no longer true in the one-dimensional case: indeed over $\proj {1|m}$, for $m \geq 2$, there are invertible sheaves that cannot the obtained by the pull-back of a certain invertible sheaf $\mathcal{O}_{\proj 1} (\ell)$ via $\pi : \proj {1|m} \rightarrow \proj {1}$, that is there are \emph{genuinely supersymmetric invertible sheaves on $\proj {1|m}$}: in view of this even supergeometry of projected, actually split supermanifolds, could effectively become a richer geometric setting compared to its ordinary counterpart.\\
In the following theorem we provide the explicit form of the transition functions of the invertible sheaves on the supercurves $\proj {1|m}$.
\begin{teo}[Invertible Sheaves on $\proj {1|m}$] The invertible sheaves on $\proj {1|m}$ are generated by the following transition functions
\bear
\mbox{\emph{Pic}}_0 (\proj {1|m}) \cong \left \langle  w^k , 1 + \sum_{|[\underline s] | = 1}^{\lfloor m/2 \rfloor } \sum_{\ell  = 1}^{2 |[\underline s]| - 1} c_{\ell}^{[\underline s]} \frac{\psi^{[\underline s]}}{w^\ell}\right \rangle. 
\eear
where $k \in \mathbb{Z}$ and $c_\ell^{[\underline s]} \in \mathbb{C}$ for each $|[\underline s]| = 1, \ldots, \lfloor m/2 \rfloor$ and $\ell = 2|[\underline s]| -1$.
\end{teo}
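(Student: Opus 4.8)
The plan is to realise every class of $\mbox{Pic}_0(\proj{1|m})$ by an explicit \v{C}ech cocycle for the standard two-chart cover and to peel off the contribution of the body from that of the nilpotents. First I would fix the cover $\{U_0,U_1\}$ of $\proj{1|m}$ coming from the two affine charts of $\proj 1$, with even coordinate $w=x_1/x_0$ and odd coordinates $\psi_i=\xi_i/x_0$ on $U_0$, and $w'=1/w$, $\psi_i'=\xi_i/x_1=\psi_i/w$ on $U_1$; on the overlap $\mathcal{O}(U_0\cap U_1)=\mathbb{C}[w,w^{-1}]\otimes\bigwedge[\psi_1,\dots,\psi_m]$. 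Since the cover has only two charts the cocycle condition is vacuous, so an even invertible sheaf of rank $1|0$ is determined by a single transition function $g\in\mathcal{O}^\ast_{\proj{1|m},0}(U_0\cap U_1)$, two such $g$'s giving isomorphic sheaves exactly when they differ by a multiplicative coboundary $g_1 g_0^{-1}$ with $g_i\in\mathcal{O}^\ast_0(U_i)$.

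Next I would factor any even invertible $g$ as $g=(\text{body})\cdot(1+\mathfrak{n})$ with $\mathfrak{n}$ an even nilpotent element, which is compatible with both products and coboundaries. The body is an invertible holomorphic function on the punctured line $\mathbb{C}^\ast$, and modulo units on $U_0$ and $U_1$ it is classified by its degree; this reproduces the transition functions $w^k$, $k\in\mathbb{Z}$, i.e.\ the pull-backs of $\mathcal{O}_{\proj 1}(k)$ that account for the factor $H^2(\mathbb{Z}_{\proj 1})\cong\mathbb{Z}$ appearing in the previous theorem. All the genuinely supersymmetric information therefore sits in the unipotent factor $1+\mathfrak{n}$, which via the even exponential sequence corresponds to a class in $H^1(\mathcal{O}_{\proj{1|m},0})$, so it suffices to produce canonical representatives of this group.

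The core of the argument is the coboundary reduction for a single monomial. For a subset $I\subseteq\{1,\dots,m\}$ of even cardinality $2j$ write $\psi_I$ for the corresponding product; on the overlap $\psi_I=w^{2j}\psi_I'$, so in the additive \v{C}ech complex a cochain $f(w)\psi_I$ is a coboundary precisely when $f$ lies in $\mbox{span}\{w^p:p\ge 0\}$ (restrictions from $U_0$) together with $\mbox{span}\{w^p:p\le -2j\}$ (restrictions from $U_1$, where $\psi_I'$ carries the extra factor $w^{-2j}$). Hence $f(w)\psi_I$ reduces uniquely modulo coboundaries to $\sum_{\ell=1}^{2j-1}c_\ell\,\psi_I/w^\ell$, contributing $2j-1$ free complex parameters for each of the $\binom{m}{2j}$ subsets of size $2j$. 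Because the $\psi_I$ for distinct $I$ are linearly independent over $\mathbb{C}[w,w^{-1}]$, the complex splits as a direct sum over $I$ with no accidental relations, and summing over $j$ recovers $\sum_{j\ge 1}\binom{m}{2j}(2j-1)=2^{m-2}(m-2)+1$, matching the dimension already computed and exhibiting the surviving generators as $\psi^{[\underline s]}/w^\ell$ with $1\le\ell\le 2|[\underline s]|-1$. Passing back to the multiplicative picture and using that $\psi_I^2=0$, each single generator exponentiates exactly to the unipotent transition function $1+c\,\psi_I/w^\ell$, and the family $1+\sum_{[\underline s],\ell}c_\ell^{[\underline s]}\psi^{[\underline s]}/w^\ell$ together with the $w^k$ is the asserted generating set.

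I expect the main obstacle to be the careful passage between the additive group $H^1(\mathcal{O}_{\proj{1|m},0})$ and the multiplicative group $\mbox{Pic}_0$. The displayed family $1+\sum c_\ell^{[\underline s]}\psi^{[\underline s]}/w^\ell$ is \emph{not} literally the exponential of the additive representative, since the product of two such factors produces genuine cross-terms $\psi_{I\cup J}/w^{\ell+\ell'}$ whose power $\ell+\ell'$ falls in the surviving range $[1,|I\cup J|-1]$ and hence is cohomologically nontrivial. The point to check is that, with respect to the filtration by $\psi$-degree, the assignment $c\mapsto[\,1+\sum c_\ell^{[\underline s]}\psi^{[\underline s]}/w^\ell\,]$ is triangular with identity linear part, so that a degree-by-degree back-substitution shows it is a bijection of $\mathbb{C}^{\,2^{m-2}(m-2)+1}$ onto the unipotent part of $\mbox{Pic}_0$. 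Once this triangularity and the single-monomial reduction are in place, the identification of the full set of generators is immediate.
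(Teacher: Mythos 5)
Your proposal is correct, and at its core it is the same two-chart \v{C}ech computation as the paper's, but organised along a genuinely different decomposition. The paper works directly in the multiplicative complex for $\mathcal{O}^\ast_{\proj{1|m},0}$: it writes a general $0$-cochain $(P,Q)$ and a general $1$-cochain $W$ on $\mathcal{U}\cap\mathcal{V}$, computes the coboundary $QP^{-1}$ explicitly, and matches coefficients order by order in the $\psi$'s, noting that $b/a$ normalises the constant, that the regular parts $\tilde{Q}_{[\underline s]}(w)$ kill the regular part of each $\tilde{W}_{[\underline s]}$ while the $\tilde{P}_{[\underline s]}(1/w)/w^{2j}$ terms kill the principal parts of order $\geq 2j$, and that the mixed terms (e.g.\ $\tilde{Q}_{ij}\tilde{P}_{kl}(1/w)/w^2$) only involve data already fixed at lower $\psi$-degree. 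You instead split off the body $w^k$ from the unipotent factor $1+\mathfrak{n}$, pass to the additive group $H^1(\mathcal{O}_{\proj{1|m},0})$ via the even exponential sequence, and carry out the linear single-monomial reduction --- powers $\geq 0$ killed from one chart, powers $\leq -2j$ from the other, leaving exactly $\psi_I/w^\ell$ with $1\leq \ell\leq 2j-1$ --- which is precisely the mechanism implicit in the paper's coefficient matching, but with the advantage that the dimension count $\sum_{j\geq 1}\binom{m}{2j}(2j-1)=2^{m-2}(m-2)+1$ falls out of the monomial-by-monomial splitting with no bookkeeping. The most valuable difference is your last paragraph: you identify explicitly that the displayed family is \emph{not} the literal exponential of the additive representative, since cross-terms $\psi_{I\cup J}/w^{\ell+\ell'}$ land in the cohomologically surviving range, and you close this gap with a triangularity/back-substitution argument with respect to the filtration by $\psi$-degree. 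The paper handles the same issue only with the brief remark that the mixed terms ``are completely fixed'' at lower order; your filtration argument is the rigorous form of that remark, so on this point your write-up is actually more careful than the original.
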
 
\begin{proof} One has to explicitly compute the representative of $\mbox{Pic}_0 (\proj {1|m}) \cong H^1(\mathcal{O}^\ast_{\proj {1|m},0})$. In order to achieve the usual covering of $\proj {1}$ given by the two open sets $\{\mathcal{U}, \mathcal{V}\}$ can be used, so that one has 
\begin{align}
&C^0 (\{\mathcal{U}, \mathcal{V} \}, \mathcal{O}_{\proj {1|m},0}^\ast) = \mathcal{O}^\ast_{\proj {1|m}, 0} (\mathcal{U}) \times \mathcal{O}^\ast_{\proj {1|m},0} (\mathcal{V}) \\
&C^1 (\{\mathcal{U}, \mathcal{V} \}, \mathcal{O}_{\proj {1|m},0}^\ast) = \mathcal{O}^\ast_{\proj {1|m}, 0} (\mathcal{U} \cap \mathcal{V}).
\end{align}
The \v{C}ech $0$-cochains are thus given by pairs of elements of the type $(P (z, \theta_1, \ldots, \theta_m), Q (w, \psi_1, \ldots, \psi_m))$. In order to write the expressions of the elements $(P,Q)$ we use the following notation: we set $[\underline s] = \{ i_1, \ldots, i_m\} $ to be a multi-index with $ i_k = \{ 0, 1\}$ such that $|[\underline s] | = \sum_{k=1}^m i_k \leq m$ and we put 
\bear
\theta^{[\underline s]} \defeq \theta_1^{i_1} \ldots \theta_k^{i_k} \ldots \theta^{i_m}_m,
\eear  
where, clearly, $\theta^0_k = 1_\mathbb{C}$. We can thus write
\begin{align}
P (z, \theta_1, \ldots, \theta_m) & = a + \sum_{k=1}^{\lfloor m/2 \rfloor}\sum_{|[\underline s]| = 2k} \tilde{P}_{[\underline s]} (z) \theta^{[\underline s]} = \nonumber \\
& = a + \sum_{i < j}^m \tilde{P}_{ij} (z) \theta^i \theta^j + \sum_{i<j<k<l = 1}^m \tilde{P}_{ijkl} (z)\theta^i \theta^j \theta^k \theta^l + \ldots
\end{align}
\begin{align}
Q (w, \psi_1, \ldots, \psi_m) & = b + \sum_{k=1}^{\lfloor m/2 \rfloor}\sum_{|[\underline s]| = 2k} \tilde{Q}_{[\underline s]} (w) \psi^{[\underline s]} = \nonumber \\
& = b + \sum_{i < j}^m \tilde{Q}_{ij} (w) \psi^i \psi^j + \sum_{i<j<k<l = 1}^m \tilde{Q}_{ijkl} (w)\psi^i \psi^j \psi^k \psi^l + \ldots
\end{align}
where $a, b \in \mathbb{C}^\ast.$ The boundary maps $\delta : C^0 (\{\mathcal{U}, \mathcal{V} \}, \mathcal{O}_{\proj {1|m},0}^\ast) \rightarrow C^1 (\{\mathcal{U}, \mathcal{V} \}, \mathcal{O}_{\proj {1|m},0}^\ast)$ acts as 
\bear
\delta ((P, Q)) = Q (w, \psi_1, \ldots, \psi_m) P^{-1} (z, \theta_1, \ldots, \theta_m)\big \lfloor_{\mathcal{U}\cap \mathcal{V}}.
\eear
Explicitly, one finds
\begin{align} \label{cob}
\delta ((P,Q)) & = \frac{b}{a} + \sum_{i < j=1}^m \left ( \frac{\tilde{Q}_{ij}(w)}{a} + \frac{b}{a^2} \frac{\tilde{P}_{ij} (1/w)}{w^2}\right ) \psi_i \psi_j + \nonumber \\
& + \sum_{i <j < k < l =1}^m \left ( \frac{\tilde{Q}_{ijkl} (w)}{a} - \frac{b}{a^3} \frac{\tilde{P}_{ijkl} (1/w)}{w^4} - \frac{1}{a^2} \frac{ \tilde{Q}_{ij}(w) \tilde{P}_{kl} (1/w)}{w^2} \right ) \psi_i \psi_j \psi_k \psi_l + \ldots
\end{align} 
Clearly, one immediately sees that $H^0(\mathcal{O}_{\proj {1|m},0}^\ast) \cong \mathbb{C}^\ast$, as the group is represented by the constant cocycles $ (a, a) $ with $a \neq 0.$\\
On the other hand, the elements in $\mathcal{O}^\ast_{\proj {1|m}, 0} (\mathcal{U}\cap \mathcal{V})$ are given by expressions having the following form
\begin{align} \label{cocy}
W (w, 1/w, \psi_1, \ldots, \psi_m) & = c w^k + \sum_{k=1}^{\lfloor m/2 \rfloor}\sum_{|[\underline s]| = 2k} \tilde{W}_{[\underline s]} (w, 1/w) \psi^{[\underline s]}  = \nonumber \\
& = c w^k + \sum_{i < j}^m \tilde{W}_{ij} (w, 1/w) \psi^i \psi^j + \sum_{i<j<k<l = 1}^m \tilde{W}_{ijkl} (w, 1/w) \psi^i \psi^j \psi^k \psi^l + \ldots
\end{align}
where again, clearly $c \in \mathbb{C}^\ast$, $k\in \mathbb{Z}$ and $\tilde{W}_{[\underline s]} \in \mathbb{C}[w, 1/w]$ for all the multi-index $[\underline s]$. Confronting the expressions in \eqref{cob} and \eqref{cocy} one see that 
\begin{itemize}
\item $b/a$ can be used to set the coefficient $c$ of $w^k$ to $1$;
\item For every power in the $\theta$'s, the polynomials $\tilde{Q}_{[\underline s]} (w)$ kill the regular part the corresponding $\tilde{W}_{[\underline s]}$;
\item The mixed terms, such as for example ${ \tilde{Q}_{ij}(w) \tilde{P}_{kl} (1/w)}/{w^2},$ in \eqref{cob} does not interfere anyway, as they enter in lower-order powers in the theta's, so that they are completely fixed. 
\end{itemize}
We thus see that the non-exact $1$-cocycles are given by transition functions having the following form 
\bear
H^1 (\mathcal{O}^\ast_{\proj {1|m},0}) \cong \left \langle w^k , 1 + \sum_{|[\underline s] | = 1}^{\lfloor m/2 \rfloor } \sum_{\ell  = 1}^{2 |[\underline s]| - 1} c_{\ell}^{[\underline s]} \frac{\psi^{[\underline s]}}{w^\ell} \right \rangle
\eear
where $k \in \mathbb{Z}$ and each of the $(m-2)2^{m-2} +1$ coefficients $c_\ell^{[\underline s]}$ is a complex number.
\end{proof}
 
\noindent Before we go on, we stress that one can check that $\mbox{Pic}_0 (\proj {1|m})$, as seen via the isomorphism with $ \mathbb{Z}\oplus \mathbb{C}^{(m-2)2^{m-2} +1}$, has the structure of an abelian group with addition, that is 
\bear
\xymatrix@R=1.5pt{ 
\mathbb{Z} \oplus (\mathbb{C} \oplus \ldots \oplus \mathbb{C}) \times \mathbb{Z} \oplus (\mathbb{C} \oplus \ldots \oplus \mathbb{C}) \ar[rr] && \mathbb{Z} \oplus (\mathbb{C} \oplus \ldots \oplus \mathbb{C}) \\
( (k, c, \ldots, c_{f(m)}) , (\tilde k, \tilde c, \ldots, \tilde c_{f(m)}) ) \ar@{|->}[rr] && (k+ \tilde k, c + \tilde c, \ldots c_{f(m)} + \tilde c_{f(m)} ).
}
\eear
where $f(m) = (m-2)2^{m-2} +1.$\\
It is fair to say that, if on the one hand we have been able to compute the cohomology of the invertible sheaves of the kind $\mathcal{O}_{\proj {n|m}} (k)$ (actually, pull-back of some $\mathcal{O}_{\proj n} (k)$ by $\pi : \mani \rightarrow \manir$), it is instead not certainly a trivial task to deduce a general formula for the cohomology of the most general invertible supersymmetric sheaf on $\proj {1|n}$ for $n\geq 2$, originating by tensor product of the generators shown above. \\
At this stage, it would be easy to provide a general formula for the genuinely supersymmetric generators of the even Picard group above, but this would not help to solve the general question. We thus limit ourselves to provide the reader with an example, as to show that these invertible sheaves have an interesting non-trivial cohomology. 
\begin{ttes}[The Cohomology of a Supersymmetric Invertible Sheaf] We consider the following supersymmetric invertible sheaf on $\proj {1|3}$:
\bear
\mathcal{L}_{\proj {1|3}} \defeq \left \{ \{ \mathcal{U}, \mathcal{V} \} ,\quad e_{\mathcal{U}} = \left ( 1 + \sum_{i < j; i,j= 1}^3 \frac{\psi_{i} \psi_{j}}{w}\right ) e_{\mathcal{V}} \right \}
\eear
for $e_\mathcal{U}$ and $e_{\mathcal{V}}$ two local frames on the open sets $\mathcal{U}$ and $\mathcal{V}$ respectively. Notice this is a generator of the even Picard group for $\proj {1|3}$. It is easy to actually compute \v{C}ech cohomology. We have that 
\begin{align}
& C^{0} (\{ \mathcal{U}, \mathcal{V} \}, \mathcal{L}_{\proj {1|3}}) \defeq \mathcal{L}_{\proj {1|3}} (\mathcal{U})\times \mathcal{L}_{\proj {1|3}} (\mathcal{V}) \owns \left (  P(z, \theta_1, \theta_2) e_{\mathcal{U}}, \; Q (w, \psi_1, \psi_2) e_{\mathcal{V}} \right ) \nonumber \\
&C^{1} (\{ \mathcal{U}, \mathcal{V} \}, \mathcal{L}_{\proj {1|3}} ) \defeq \mathcal{L}_{\proj {1|3}} (\mathcal{U} \cap \mathcal{V})\owns W (w, 1/w, \psi_1, \psi_2) e_{\mathcal{V}}
\end{align}
where $P \in \mathbb{C}[z, \theta_1, \theta_2], \; Q\in \mathbb{C}[w, \psi_1 \psi_2]$ and $W \in \mathbb{C}[w, 1/w, \psi_1, \psi_2]$. \\
By following the usual strategy, we change coordinates as to get
\begin{align}
P (z, \theta_1, \theta_2) e_{\mathcal{U}} & = \left (A^{(0)} (z) + \sum_{i = 1}^3 A^{(1)}_i (z) \theta_i + \sum_{i<j; i ,j =1}^3 A^{(2)}_{ij}(z) \theta_i \theta_j + A^{(3)}(z) \theta_1 \theta_2 \theta_3 \right ) e_{\mathcal{U}} \nonumber \\ 
& = \Bigg ( A^{(0)} (1/w) + \sum_{i = 1}^3 A^{(1)}_i (1/w) \frac{\psi_i }{w} +  \sum_{i<j; i ,j =1}^3 \left ( \frac{A^{(2)}_{ij} (1/w)}{w^2} + \frac{A^{(0)} (1/w)}{w} \right ) \psi_{i} \psi_j + \nonumber  \\
& \quad + \sum_{i<j; i,j =1}^3 \left ( \sum_{i = 1}^3 (-1)^{i-1}\frac{ A^{(1)}_{i} (1/w)}{w^2} + \frac{A^{(3)} (1/w)}{w^3} \right ) \psi_1 \psi_2 \psi_3 \Bigg ) e_{\mathcal{V}}.
\end{align}
One can clearly see that there is no way to get a globally defined holomorphic section, that is to extend $P(z, \theta_1, \theta_2) e_{\mathcal{U}}$ to the whole $\proj {1|3}$ without hitting a singularity, and this tells that $h^0 (\mathcal{L}_{\proj {1|3}}) = 0|0$. \\
Instead, considering $\left ( Q  -  P\right )\lfloor_{\mathcal{U} \cap \mathcal{V}}$, upon using the expression above for $P$ in the chart $\mathcal{V}$, one finds that $h^1 (\mathcal{L}_{\proj {1|3}}) = 3 |2$, and in particular, it is generated by the following elements:
\begin{align}
H^1 ( \mathcal{L}_{\proj {1|3}} ) = \left  \langle \frac{\psi_1 \psi_2}{w}, \frac{\psi_1 \psi_3}{w}, \frac{\psi_2 \psi_3}{w} \; \Bigg |\;  \frac{\psi_1 \psi_2 \psi_3}{w}, \frac{\psi_1 \psi_2 \psi_3}{w^2} \right \rangle_{\mathbb{C}}  
\end{align}
where we have written the representative in the chart $\mathcal{V}$: notice that all of these elements are nilpotent, they live in $\mathcal{J}_{\proj {1|3}} (\mathcal{U} \cap \mathcal{V})$. The cohomology of $\mathcal{L}_{\proj {1|3}}$ is thus given by
\bear
h^i (\mathcal{L}_{\proj {1|3}}) = \left \{ \begin{array}{l} 
0|0 \qquad \quad i = 0 \\
3|2  \qquad \quad i = 1.
\end{array}
\right.
\eear
Similar computation can be easily done for any invertible sheaves of this kind: in general, one would again a vanishing zeroth cohomology group, while a non-vanishing - and possibly very rich as the fermionic dimension of $\proj {1|m}$ increases - first cohomology group.  
\end{ttes}
Actually, the case of the supercurves differs from the higher-dimensional case also when looking at the $\Pi$-invertible sheaves, we call them $\mathcal{L}_\Pi$ (see in particular \cite{Manin}, \cite{ManinNC} for an introduction to the matter and \cite{Noja} for a recent construction of $\Pi$-projective spaces, the supermanifolds supporting these particular kind of sheaves): these are \virgolette special'' sheaves of rank $1|1$ on $\mani$ endowed with an \emph{odd} endomorphism $\Pi : \mathcal{L}_{\Pi} \rightarrow \mathcal{L}_\Pi$, exchanging their even with their odd part. \\
Given a locally-free sheaf of $\mathcal{O}_\mani$-modules $\mathcal{L}$ on $\mani$, a trivial, or better, \emph{split} $\Pi$-invertible sheaf on $\mani$ is simply given by $\mathcal{L}^{s}_{\Pi} \defeq \mathcal{L} \oplus \Pi \mathcal{L}$ and the odd endomorphism $\Pi$ acts as the exchange of factors. But there might be non-trivial (non-split) $\Pi$-invertible sheaves. Indeed, following Manin, the set of isomorphism classes of $\Pi$-invertible sheaves, call it $\mbox{Pic}_{\Pi} (\mani)$ is isomorphic to the cohomology group $H^1 (\mathcal{O}^\ast_{\mani})$. Notice that this is \emph{not} actually a group, but actually a pointed-set, as $\mathcal{O}^\ast_{\mani}$ is \emph{not} a sheaf of abelian groups. This pointed-set, in turn fits into an exact sequence
\bear 
\xymatrix@R=1.5pt{ 
\cdots \ar[r] & \mbox{Pic}_0 (\mani) \ar[r] & \mbox{Pic}_{\Pi} (\mani) \ar[r] & H^1 (\mathcal{O}_{\mani, 1}) \ar[r]^\delta & H^2(\mathcal{O}^\ast_{\mani,0}). \nonumber
}
\eear
The first map above is defined as follows: 
\bear 
\xymatrix@R=1.5pt{ 
\mbox{Pic}_0(\mani) \ar[rr] && \mbox{Pic}_{\Pi} (\mani)  \\
\mathcal{L} \ar@{|->}[rr] && \mathcal{L} \oplus \Pi \mathcal{L}.
}
\eear
It is clear that, because of exactness, the obstruction to split a $\Pi$-invertible sheaf lies in the kernel of the boundary map $\delta$. We thus have the following simple corollaries to the previous theorem
\begin{cor}[$\Pi$-Invertible Sheaves on $\proj {n|m}$ for $n>1$] All of the $\Pi$-invertible sheaves on $\proj {n|m}$ for $n>1$ are split, that is they are of the form
\bear
\mathcal{L}^s_{\Pi} = \mathcal{O}_{\proj {n|m}} (\ell) \oplus \Pi \mathcal{O}_{\proj {n|m}} (\ell) 
\eear
for some $\mathcal{O}_{\proj {n|m}} (\ell).$
\end{cor}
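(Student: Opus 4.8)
The plan is to read off the splitting of every $\Pi$-invertible sheaf directly from the exactness of the sequence
\[
\cdots \rightarrow \mbox{Pic}_0 (\proj {n|m}) \rightarrow \mbox{Pic}_\Pi (\proj {n|m}) \rightarrow H^1 (\mathcal{O}_{\proj {n|m},1}) \xrightarrow{\;\delta\;} H^2(\mathcal{O}^\ast_{\proj {n|m},0})
\]
recalled just above. Since the first arrow sends an even invertible sheaf $\mathcal{L}$ to the split $\Pi$-sheaf $\mathcal{L} \oplus \Pi \mathcal{L}$, the assertion that every $\Pi$-invertible sheaf on $\proj {n|m}$ is split is literally the assertion that this first arrow is surjective. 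By exactness of the sequence of pointed sets at $\mbox{Pic}_\Pi (\proj {n|m})$, its image coincides with the preimage of the base point of $H^1 (\mathcal{O}_{\proj {n|m},1})$; hence surjectivity is guaranteed as soon as $H^1 (\mathcal{O}_{\proj {n|m},1}) = 0$. So the whole statement reduces to a vanishing of the first cohomology of the odd part of the structure sheaf.

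First I would isolate $\mathcal{O}_{\proj {n|m},1}$ from the splitting \eqref{fact}. As a sheaf of $\mathcal{O}_{\proj n}$-modules it is precisely the purely odd summand
\[
\mathcal{O}_{\proj {n|m},1} = \bigoplus_{k=0}^{\lfloor m/2 \rfloor - \delta_{0, {m {\mbox{\tiny{mod}}}2}}} \mathcal{O}_{\proj n} (-2k-1)^{\oplus {m \choose 2k+1}}.
\]
Because cohomology commutes with finite direct sums, $H^1 (\mathcal{O}_{\proj {n|m},1})$ is then a direct sum of copies of $H^1 (\proj n, \mathcal{O}_{\proj n} (\ell))$ with $\ell$ odd and negative. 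For $n \geq 2$ the degree $i = 1$ is strictly between $0$ and $n$, so each such term vanishes by the Bott vanishing for intermediate cohomology on projective space --- exactly the observation, already recorded before Theorem \ref{dimensions}, that on $\proj {n|\star}$ only the $0$-th and $n$-th cohomology can be non-zero. Therefore $H^1 (\mathcal{O}_{\proj {n|m},1}) = 0$ for every $n > 1$, and the arrow $\mbox{Pic}_0 (\proj {n|m}) \rightarrow \mbox{Pic}_\Pi (\proj {n|m})$ is surjective: every $\Pi$-invertible sheaf is split.

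To finish I would feed in the classification of the even Picard group from the preceding theorem: for $n \geq 2$ one has $\mbox{Pic}_0 (\proj {n|m}) \cong \mathbb{Z}$, generated by the $\mathcal{O}_{\proj {n|m}} (\ell)$. Combining this with the surjectivity just established, every $\Pi$-invertible sheaf arises as $\mathcal{L} \oplus \Pi \mathcal{L}$ for some even invertible $\mathcal{L}$, and this $\mathcal{L}$ must be $\mathcal{O}_{\proj {n|m}} (\ell)$ for some $\ell \in \mathbb{Z}$, giving exactly the stated form. I expect no genuine obstacle here: the cohomological input is immediate from intermediate-degree Bott vanishing, and the only point deserving a little care is the bookkeeping of exactness \emph{for pointed sets}, so that surjectivity of the first map really does follow from the vanishing of the target $H^1$ rather than from a group-theoretic argument that is unavailable since $\mathcal{O}^\ast_{\mani}$ is not a sheaf of abelian groups.
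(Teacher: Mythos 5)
Your proposal is correct and takes essentially the same route as the paper: both read off splitness from the exact sequence of pointed sets via the vanishing of $H^1(\mathcal{O}_{\proj{n|m},1})$ for $n>1$, and then identify the underlying invertible sheaf using $\mbox{Pic}_0(\proj{n|m})\cong\mathbb{Z}$. The only difference is that you make explicit, via the decomposition \eqref{fact} and intermediate-degree Bott vanishing on $\proj{n}$, the cohomology vanishing that the paper simply asserts.
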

\begin{proof} In the case $\proj {n|m}$, when $n>1$, one finds $H^1 (\mathcal{O}_{\proj {2|m}, 1}) =0$ for every $m$, therefore one has that the only $\Pi$-invertible sheaves on $\proj {n|m}$ are those of the form $\mathcal{L}^{s}_\Pi =\mathcal{O}_{\proj {2|m}} (\ell) \oplus \Pi \mathcal{O}_{\proj {2|m}}(\ell),$ as we have already proved that $\mathcal{L} = \mathcal{O}_{\proj {n|m}} (\ell)$ are the only invertible sheaves on $\proj {n|m}$ in the case $n>1$. \end{proof}
\begin{cor}[$\Pi$-Invertible Sheaves on $\proj {1|m}$] There are non-split $\Pi$-invertible sheaves on $\proj {1|m}$ if and only if $m > 2$.
\end{cor}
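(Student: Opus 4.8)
The plan is to extract the dichotomy directly from the exact sequence of pointed sets recorded above, specialised to $\mani = \proj {1|m}$:
\[
\cdots \rightarrow \mbox{Pic}_0 (\proj {1|m}) \rightarrow \mbox{Pic}_\Pi (\proj {1|m}) \rightarrow H^1 (\mathcal{O}_{\proj {1|m},1}) \xrightarrow{\;\delta\;} H^2 (\mathcal{O}^\ast_{\proj {1|m},0}).
\]
Since the split $\Pi$-invertible sheaves $\mathcal{L} \oplus \Pi \mathcal{L}$ are exactly the image of the first arrow, exactness at $\mbox{Pic}_\Pi (\proj {1|m})$ says that a $\Pi$-invertible sheaf fails to be split precisely when its image in $H^1 (\mathcal{O}_{\proj {1|m},1})$ is nonzero. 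Hence non-split $\Pi$-invertible sheaves exist if and only if the image of $\mbox{Pic}_\Pi (\proj {1|m}) \rightarrow H^1 (\mathcal{O}_{\proj {1|m},1})$ is nontrivial, and by exactness at $H^1 (\mathcal{O}_{\proj {1|m},1})$ this image is exactly $\ker \delta$. The whole corollary therefore reduces to deciding when $\ker \delta \neq 0$.

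First I would compute $H^1 (\mathcal{O}_{\proj {1|m},1})$ from the splitting. The odd part of the structure sheaf is $\bigoplus_{k} \mathcal{O}_{\proj 1} (-2k-1)^{\oplus \binom{m}{2k+1}}$, and on the curve $\proj 1$ one has $h^1 (\mathcal{O}_{\proj 1}(d)) = -d-1$ for $d \leq -2$ and $0$ otherwise. Thus the $k=0$ block $\mathcal{O}_{\proj 1}(-1)^{\oplus m}$ contributes nothing, while each block with $k \geq 1$ contributes $2k \binom{m}{2k+1}$, so that $h^1 (\mathcal{O}_{\proj {1|m},1}) = \sum_{k \geq 1} 2k \binom{m}{2k+1}$. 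This is strictly positive exactly when the leading term $2\binom{m}{3}$ is nonzero, i.e. when $m \geq 3$, and it vanishes for $m \leq 2$. This already settles the \virgolette only if'' direction: for $m \leq 2$ the target group is zero, so $\ker \delta = 0$ and every $\Pi$-invertible sheaf on $\proj {1|m}$ is split.

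For the \virgolette if'' direction I would show that for $m > 2$ the boundary map $\delta$ vanishes, so that $\ker \delta = H^1 (\mathcal{O}_{\proj {1|m},1}) \neq 0$ and non-split sheaves must appear. The key input is $H^2 (\mathcal{O}^\ast_{\proj {1|m},0}) = 0$: feeding the even exponential short exact sequence $0 \rightarrow \mathbb{Z}_{\proj {1|m}} \rightarrow \mathcal{O}_{\proj {1|m},0} \rightarrow \mathcal{O}^\ast_{\proj {1|m},0} \rightarrow 0$ into its long exact cohomology sequence traps $H^2 (\mathcal{O}^\ast_{\proj {1|m},0})$ between $H^2 (\mathcal{O}_{\proj {1|m},0})$ and $H^3 (\mathbb{Z}_{\proj 1})$, both of which vanish — the former because $\mathcal{O}_{\proj {1|m},0}$ is a finite sum of coherent sheaves on the curve $\proj 1$, the latter for dimensional reasons. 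With $H^2 (\mathcal{O}^\ast_{\proj {1|m},0}) = 0$ the map $\delta$ necessarily lands in the trivial group, so $\ker \delta$ is all of $H^1 (\mathcal{O}_{\proj {1|m},1})$, which is nonzero for $m > 2$. I expect the main obstacle to be organisational rather than computational: one must keep the pointed-set exactness straight — reading \virgolette kernel'' and \virgolette image'' as preimages of the distinguished point — since it is precisely exactness at $\mbox{Pic}_\Pi (\proj {1|m})$ that upgrades \virgolette $\ker \delta \neq 0$'' into the genuine existence of a non-split $\Pi$-invertible sheaf.
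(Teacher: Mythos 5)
Your proposal is correct and follows essentially the same route as the paper: the exact sequence of pointed sets, the computation $h^1(\mathcal{O}_{\proj{1|m},1}) = \sum_{k\geq 1} 2k\binom{m}{2k+1}$ (which the paper evaluates in closed form as $2^{m-2}(m-2)$, though positivity for $m\geq 3$ and vanishing for $m\leq 2$ is all the corollary needs), and the vanishing of $H^2(\mathcal{O}^\ast_{\proj{1|m},0})$ forcing $\ker\delta$ to be all of $H^1(\mathcal{O}_{\proj{1|m},1})$. Your only genuine additions are refinements the paper leaves implicit: you justify $H^2(\mathcal{O}^\ast_{\proj{1|m},0})=0$ via the even exponential sequence (where the paper simply asserts it as clear) and you spell out the pointed-set exactness at $\mathrm{Pic}_\Pi(\proj{1|m})$ that converts $\ker\delta \neq 0$ into actual existence of a non-split sheaf.
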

\begin{proof}
On $\proj {1|m}$, one finds that clearly $H^2 (\mathcal{O}^\ast_{\proj {1|m}, 0}) =0$ and $H^0 (\mathcal{O}_{\proj {1|m}, 1}) = 0$ for every $m$. Also, $H^1 (\mathcal{O}_{\proj {1|m}, 1} ) \neq 0 $ for $m\geq 3$, thus there can be non-split $\Pi$-invertible sheaves on $\proj {1|m}$ for $m\geq 3$. In particular 
\bear
h^1 (\mathcal{O}_{\proj {1|m}, 1}) = \sum_{m=1}^{\lfloor m/2 \rfloor - \delta_{0, {m {\mbox{\tiny{mod}}}2}}} { m\choose 2k+1} 2k = 2^{m-2} (m-2),  
\eear 
so that one has a short exact sequence in the case $m\geq 3$, that reads
\bear
\xymatrix@R=1.5pt{ 
0 \ar[r] & \mathbb{Z} \oplus \mathbb{C}^{2^{m-2} (m-2) +1} \ar[r] & \mbox{Pic}_{\Pi} (\mani) \ar[r]  & \mathbb{C}^{2^{m-2} (m-2)} \ar[r]^\delta & 0. \nonumber
}
\eear
One finds that $\ker (\delta ) \cong \mathbb{C}^{2^{m-2} (m-2)}$ (as a pointed-set). \\
The cases $m=1 $ and $m=2$ are special in that one has $H^1 (\mathcal{O}_{\proj {1|m},1}) = 0$, so that the only $\Pi$-invertible sheaves are split of the form $\mathcal{L}_{\Pi}^s = \mathcal{L}\oplus \Pi \mathcal{L}$ for a certain invertible sheaf $\mathcal{L}$ on $\proj {1|m} $, for $m = 1, 2.$
\end{proof}

\section{Cotangent Sheaf and Berezinian Sheaf of a Supermanifold}

\noindent In this section we first work in full generality to see what happens when dealing with the sheaf of $1$-forms on a generic supermanifold $\mani$. Once the general framework is established, we specialise to the case of projected supermanifolds and workout the example of projective superspaces $\proj {n|m}$, we are particularly concerned with.\vspace{.5cm}

It is an early result due to Leites that the tangent sheaf $\mathcal{T}_\mani$ of a (say complex) supermanifold $\mani$ of dimension $p|q$ is locally-free, having a local basis given by the derivations $\{ \partial_{z_1}, \ldots, \partial_{z_p}, \partial_{\theta_1}, \ldots, \partial_{\theta_q} \}$. The cotangent sheaf or sheaf of $1$-forms $\Omega_\mani^1$ is defined as the dual of the tangent sheaf $(\mathcal{T}_\mani)^\vee = \mbox{Hom}_{\stsheafm} (\mathcal{T}_{\mani}, \stsheafm).$ It is locally free as well and a local basis is given by $\{ dz_1, \ldots, dz_p , d\theta_1, \ldots,  d\theta_q \}$, with a duality pairing with the tangent space (locally) given by:  
\bear
\xymatrix@R=1.5pt{ 
\langle \cdot , \cdot \rangle_{U} : \, (\mathcal{T}_\mani \otimes_{\stsheafm} \Omega^1_\mani )(U) \ar[r] & \stsheafm (U) \nonumber \\
D \otimes \omega \ar@{|->}[r] &  \langle D , \omega \rangle_{U} 
}
\eear
if $D $ and $\omega$ are local sections of $\mathcal{T}_\mani$ and $\Omega^1_\mani$ respectively. Given two local sections of the structure sheaf $f, g \in \stsheafm (U)$, the duality paring reads
\bear
\langle f D, g \,\omega \rangle_U = (-1)^{|D| \cdot |g|} f g \, \langle D, \omega \rangle_U.  
\eear

We now consider a generic supermanifold $\mani$, that is, in principle, we only have an embedding $\iota : \manir \rightarrow \mani$, which allows us to have an exact sequence of $\stsheafm$-modules as follows 
\bear
\xymatrix@R=1.5pt{ 
0 \ar[rr] && \mathcal{N}_{\stsheafm} \ar[rr] & &  \Omega^1_{\mani} \ar[rr]^{res_{\stsheafm}} && \iota_* \Omega^1_{\manir} \ar[rr] && 0
 }
\eear
where $\mathcal{N}$ is a suitable sheaf of $\stsheafm$-module, actually kernel of the map $res_{\stsheafm} :  \Omega^1_{\mani}  \rightarrow \iota_* \Omega^1_{\manir}  $, where $\iota_*\Omega^1_{\manir}$ is the push-forward of the sheaf of $1$-forms over the reduced variety $\manir$, that is indeed a sheaf of $\stsheafm$-modules. \\
Likewise, we can also consider the pull-back of the previous short exact sequence: 
\bear
\xymatrix@R=1.5pt{ 
0 \ar[rr] && \mathcal{N}_{\stsheafred} \ar[rr] &&  \iota^* \Omega^1_{\mani} \ar[rr]^{res_{\stsheafred}} && \Omega^1_{\manir} \ar[rr] && 0
}
\eear
This gives a short exact sequence of $\stsheafred$-modules. Here, similarly as above $ \mathcal{N}_{\stsheafred}$ is the kernel. Notice that the pull-back by $\iota $ makes the short exact sequence well-defined for we have $\iota^* \Omega^1_\mani = i^{-1}\Omega^1_\mani \otimes_{i^{-1} \stsheafm} \stsheafred.$\\
We now wonder if there actually exists a projection $\pi : \mani \rightarrow \manir$ splitting the exact sequence above. In presence of the projection, it makes sense to consider the following short exact sequence of $\stsheafm$-modules:
\bear
\xymatrix@R=1.5pt{ 
0 \ar[rr] && \pi^* \Omega^1_{\manir } \ar[rr] &&  \Omega^1_{\mani} \ar[rr] \ar[rr] && \mathcal{Q}_{\stsheafm} \ar[rr] && 0
 }
\eear
where now $\mathcal{Q}_{\stsheafm}$ is a suitable quotient and $\pi^* \Omega^1_{\manir} = \stsheafm \otimes_{p^{-1} \stsheafred} p^{-1} \Omega^1_{\manir}$. This short exact sequence splits, 
\bear
\xymatrix@R=1.5pt{ 
0 \ar[rr] && \pi^* \Omega^1_{\manir }  \ar[rr]_{imm} & &  \ar@{-->}@/_1.3pc/[ll]_{proj}  \Omega^1_{\mani}  \ar[rr] && \mathcal{Q}_{\stsheafm} \ar[rr] && 0.
}
\eear
Notice that $\mathcal{Q}_{\stsheafm}$ is therefore the quotient $\mathcal{Q}_{\stsheafm} \defeq \slanttwo{\Omega^1_{\mani}}{\pi^* \Omega^1_{\manir}} $, so locally, we have that elements in $\mathcal{Q}_{\stsheafm}$ are of the form $\stsheafm \cdot \{ dz_1, \ldots, dz_p, d\theta_1, \ldots, d\theta_q \} \mbox{mod}\, \stsheafm \cdot \{ dz_1, \ldots, dz_p \}.$\\
Locally, over an open set $U \subseteq |\mani |$ we have: 
\bear
\xymatrix@R=1.5pt{ 
\pi^* \Omega^1_{\manir} (U) \ar[r]^{imm_U} & \Omega^1_\mani \ar[r]^{proj_U} (U) & \pi^{*} \Omega^1_{\manir} (U) \nonumber \\
\stsheafm \cdot \{dz_1, \ldots, dz_p \}  \ar@{|->}[r] &  \stsheafm \cdot \{dz_1 , \ldots, dz_p , 0 \ldots, 0 \} \ar@{|->}[r] & \stsheafm \cdot \{ dz_1, \ldots, dz_p  \}. 
 }
\eear
Therefore, when dealing with a projected / split supermanifold that possess a morphism $\pi : \mani \rightarrow \manir $, we can consider the sheaf of $1$-form $\Omega_\mani^1$ as given by a direct sum, as follows: 
\bear
\xymatrix@R=1.5pt{ 
0 \ar[rr] &&\pi^\ast \Omega^1_{\manir} \ar[rr] && \pi^\ast \Omega^1_{\manir} \oplus \mathcal{Q}_{\mathcal{O}_{\mani}} \ar[rr] && \mathcal{Q}_{\mathcal{O}_{\mani}} \ar[rr] && 0.
 }
\eear
Now we need the following
\begin{cor} Let $\mani$ be a projected supermanifold, with projection given by $\pi : \mani \rightarrow \manir$. The the following isomorphism holds
\bear
\pi^\ast \mathcal{F}_\mani \cong \slantone{\Omega^1_\mani}{\pi^\ast \Omega^1_{\manir}}.
\eear
\end{cor}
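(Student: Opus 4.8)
The plan is to exhibit a natural $\stsheafm$-linear morphism between the two sheaves and then check it is an isomorphism by a purely local argument, since both sides turn out to be locally free $\stsheafm$-modules of the same rank. Writing $\dim_{\mathbb{C}}\mani = p|q$, the fermionic sheaf $\mathcal{F}_\mani = \slantone{\mathcal{J}_\mani}{\mathcal{J}^2_\mani}$ is locally free of rank $0|q$ over $\stsheafred$, with local frame $\{\theta_1\,\mbox{mod}\,\mathcal{J}^2_\mani,\ldots,\theta_q\,\mbox{mod}\,\mathcal{J}^2_\mani\}$. Since $\pi=(p,p^\sharp)$ is the projection and $p^\sharp:\stsheafred\hookrightarrow\stsheafm$ is the splitting, the pull-back $\pi^*\mathcal{F}_\mani = \stsheafm\otimes_{p^{-1}\stsheafred}p^{-1}\mathcal{F}_\mani$ is locally free of rank $0|q$ over $\stsheafm$, on the frame $\{1\otimes(\theta_j\,\mbox{mod}\,\mathcal{J}^2_\mani)\}$. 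On the other side, the local description of the split sequence recalled just above the statement shows that $\mathcal{Q}_{\stsheafm} = \slanttwo{\Omega^1_\mani}{\pi^*\Omega^1_{\manir}}$ is locally free of rank $0|q$ over $\stsheafm$, on the frame $\{[d\theta_1],\ldots,[d\theta_q]\}$ given by the classes of the odd generators of $\Omega^1_\mani$ modulo $\pi^*\Omega^1_{\manir}$. So the two candidates match rank for rank, and the whole content is to match them \emph{canonically}.

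The map I would use comes from the universal derivation $d:\stsheafm\to\Omega^1_\mani$. On a coordinate patch I set
\[
\Phi : \pi^*\mathcal{F}_\mani \longrightarrow \mathcal{Q}_{\stsheafm}, \qquad 1\otimes(\theta_j\,\mbox{mod}\,\mathcal{J}^2_\mani)\longmapsto [d\theta_j],
\]
extended $\stsheafm$-linearly; morally this is $f\otimes s \mapsto p^\sharp(f)\cdot[ds]$ for $s\in\mathcal{J}_\mani$, i.e. the composite of $d$ with the quotient $\Omega^1_\mani\twoheadrightarrow\mathcal{Q}_{\stsheafm}$. The reason the odd generators are exactly the right target is that in $\mathcal{Q}_{\stsheafm}$ the even differentials $dz_i$ have been killed, and — this is where being \emph{projected} enters — the differential of any function pulled back from $\manir$ lands in $\pi^*\Omega^1_{\manir}$, so that scalars coming from $p^\sharp(\stsheafred)$ are respected. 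On each patch $\Phi$ sends a frame to a frame, hence is an isomorphism there; the only remaining issue is independence of the chart.

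This gluing step is the heart of the matter, and I expect it to be the only real obstacle. Given two overlapping charts with projected coordinates $(z|\theta)\rightsquigarrow(w|\psi)$ — so that, $\mani$ being projected, the even coordinates transform among themselves, $w=w(z)$ — I would compare the two transition cocycles. For $\mathcal{Q}_{\stsheafm}$ one has
\[
d\psi_a = \sum_c \frac{\partial\psi_a}{\partial z_c}\,dz_c + \sum_j \frac{\partial\psi_a}{\partial\theta_j}\,d\theta_j, \qquad [d\psi_a] = \sum_j \frac{\partial\psi_a}{\partial\theta_j}\,[d\theta_j],
\]
the even part dying in the quotient; for $\pi^*\mathcal{F}_\mani$ the transition of the frame $\{\theta_j\,\mbox{mod}\,\mathcal{J}^2_\mani\}$ is governed by the \emph{reduced} odd–odd Jacobian, read through $p^\sharp$, coming from $\psi_a\equiv\sum_j c_{aj}(z)\,\theta_j\,\mbox{mod}\,\mathcal{J}^2_\mani$ with $c_{aj}=\overline{\partial\psi_a/\partial\theta_j}$. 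The crux is thus to see that these two matrices agree. They always coincide modulo $\mathcal{J}_\mani$, and they coincide \emph{on the nose} precisely when the odd coordinate change is linear in the $\theta$'s, so that the Jacobian block $\partial\psi_a/\partial\theta_j$ is already reduced; this is automatic for split supermanifolds, and in particular for the projective superspaces $\proj{n|m}$ that are the object of this paper, where $\psi_i = \theta_i/z$. Once the two transition systems are identified, $\Phi$ patches to a global isomorphism $\pi^*\mathcal{F}_\mani\cong\mathcal{Q}_{\stsheafm}=\slantone{\Omega^1_\mani}{\pi^*\Omega^1_{\manir}}$, which is the assertion.
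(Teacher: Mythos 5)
Your construction is the same as the paper's: the paper defines precisely your frame map $\theta^j \,\mbox{mod}\, \mathcal{J}^2_\mani \mapsto d\theta^j \,\mbox{mod}\, \pi^\ast\Omega^1_{\manir}$ and checks chart-independence by differentiating the transition $\eta^j \equiv \sum_b f^j_b(x)\theta^b \,\mbox{mod}\,\mathcal{J}^2_\mani$, using projectedness exactly as you do, to kill the $dx^b$-terms in the quotient. Where you diverge is at the gluing step, and there your diagnosis is in fact sharper than the paper's own bookkeeping: the paper concludes $d\eta^j \equiv \sum_b f^j_b(x)\, d\theta^b \,\mbox{mod}\,\pi^\ast\Omega^1_{\manir}$, but since only the class of $\eta^j$ modulo $\mathcal{J}^2_\mani$ is controlled, the computation really gives agreement modulo $\pi^\ast\Omega^1_{\manir} + \mathcal{J}^2_\mani\cdot\Omega^1_\mani$. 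The honest transition cocycle of $\slantone{\Omega^1_\mani}{\pi^\ast \Omega^1_{\manir}}$ is the full Jacobian block $\partial\eta^j/\partial\theta^b$, while that of $\pi^\ast\mathcal{F}_\mani$ is its reduction $p^\sharp(f^j_b)$; as you observe, these coincide on the nose exactly when the odd transitions are linear in the $\theta$'s. A related symptom, worth noting: your ``moral'' intrinsic formula $f\otimes s \mapsto p^\sharp(f)\cdot[ds]$ is not actually well defined on classes in $\mathcal{F}_\mani$, because $d(\mathcal{J}^2_\mani) \subseteq \mathcal{J}_\mani\Omega^1_\mani$ but $d(\mathcal{J}^2_\mani) \not\subseteq \pi^\ast\Omega^1_{\manir}$ — only the frame-wise definition makes sense, which is why the gluing question is the whole content.

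The gap in your write-up, relative to the statement, is that a projected supermanifold need not admit an adapted atlas with $\theta$-linear odd transitions: that is the definition of \emph{split}, and projected non-split supermanifolds exist as soon as the odd dimension is at least $3$ (for instance over $\proj 1$, choosing $\mathcal{F}_\mani$ with $H^1(\mathcal{H}om(\mathcal{F}_\mani, \wedge^3 \mathcal{F}_\mani))\neq 0$ and gluing with a cubic term $\eta^j = \sum_b f^j_b(x)\theta^b + \sum_{b<c<d} g^j_{bcd}(x)\theta^b\theta^c\theta^d$). For such $\mani$ your local isomorphisms differ on overlaps by even matrices of the form $1 + N$ with $N$ valued in $\mathcal{J}^2_\mani$, and whether they can be corrected so as to glue is a genuine further question: order by order the obstruction lies in groups of the shape $H^1(|\mani|, \mathcal{H}om(\mathcal{F}_\mani,\mathcal{F}_\mani)\otimes \mbox{Gr}^{(2k)}\stsheafm)$, and neither you nor the paper shows these classes vanish. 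So, as written, your argument proves the corollary for split supermanifolds in a split atlas adapted to the given $\pi$ — which covers everything the paper actually uses downstream, in particular $\proj{n|m}$ and the Berezinian computation, since the latter only needs the exact sequence and the identification of the quotient — but it does not establish the statement in the generality claimed (``projected''). To be clear, this restriction is not a defect of your proposal relative to the paper: the paper's proof silently promotes coefficients known only $\mbox{mod}\,\mathcal{J}^2_\mani$ to exact ones at the last step, i.e.\ it has the same gap you isolated, without acknowledging it.
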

\begin{proof} 
Locally elements in $\pi^\ast \mathcal{F}_\mani$ can be written as $\theta^a \mbox{mod} \mathcal{J}^2_\mani$ for $a = 1, \ldots, m$ where $m$ is the odd dimension of $\mani,$ while elements in $\slanttwo{\Omega^1_\mani}{\pi^\ast \Omega^1_{\manir}}$ have a local form given by $d\theta^a \, \mbox{mod} \, \pi^\ast \Omega^1_{\manir},$ again for $a= 1, \ldots, m$ where $m$ odd dimension of $\mani$. The isomorphism we are considering reads
\begin{align}
\xymatrix@R=1.5pt{
\pi^\ast \mathcal{F}_\mani \ar[rr] && \slantone{\Omega^1_{\mani}}{\pi^\ast \Omega^1_{\mani}} \\
\theta^j \, \mbox{mod} \, \mathcal{J}_\mani^2 \ar@{|->}[rr] &&  d\theta^j \,\mbox{mod}\, \pi^\ast \Omega^1_{\manir}.
 }
\end{align}
We need this to hold true when passing from chart to chart, that is we need that $ \eta^j \, \mbox{mod}\, \mathcal{J}^2_\mani$ go to $d\eta^j \,\mbox{mod}\, \pi^\ast \Omega^1_{\manir},$ therefore we consider another local chart of $\mani$ having local coordinates given by $(y^i | \theta^j)$, and  we consider the transformation of $dx^i $ and of $d\theta^j$ for 
\begin{align}
d y^i & = \sum_b \frac{\partial y^i}{\partial x^b} dx^b + \sum_b \frac{\partial y^i}{\partial \theta^b} d\theta^b = \sum_b \frac{\partial y^i}{\partial x^b} dx^b \equiv 0\, \mbox{mod}\, \pi^\ast \Omega^1_{\manir},
\end{align}
as $\partial_{\theta^b} y^i = 0 $ since $\mani $ is projected and therefore $y = y (x)$. Moreover, remembering that $\eta^j \equiv \sum_{b} f_b^j(x) \theta^b \, \mbox{mod}\, \mathcal{J}^2_\mani,$ one has
\begin{align}
d \eta^j & = \sum_b \frac{\partial \eta^j}{\partial x^b} dx^b + \sum_b \frac{\partial \eta^j}{\partial \theta^b} d\theta^b \nonumber \\ 
& = \sum_b \frac{\partial}{\partial x^b } \left (  \sum_c f_c^j (x) \theta^c \, \mbox{mod} \, \mathcal{J}^2_\mani \right ) dx^b + \sum_b \frac{\partial}{\partial \theta^b} \left (  \sum_c f_c^j (x) \theta^c \, \mbox{mod} \, \mathcal{J}^2_\mani\right ) d\theta^b \nonumber \\
& = \sum_{b, c } \frac{\partial f^{j}_c (x)}{\partial x^b} \theta^c \, \mbox{mod}\, \mathcal{J}^2_\mani \, dx^b + \sum_{b} f^{j}_b (x) \, \mbox{mod} \, \mathcal{J}^2_\mani \, d\theta^b \nonumber \\
& \equiv \sum_b f^j_b (x) d\theta^b \, \mbox{mod} \, \left ( \pi^\ast \Omega^1_{\manir} \right ), 
\end{align}
thus concluding the proof.
\end{proof}
\noindent Then, the previous short exact sequence can be re-written in the more useful form 
\bear
\xymatrix@R=1.5pt{ 
0 \ar[rr] &&\pi^\ast \Omega^1_{\manir} \ar[rr] && \pi^\ast \Omega^1_{\manir} \oplus \pi^\ast \mathcal{F}_{{\mani}} \ar[rr] && \pi^\ast \mathcal{F}_{{\mani}} \ar[rr] && 0,
 }
\eear
so that one can get the following
\begin{teo}[Berezinian of Projected Supermanifold] \label{bersup} Let $\mani$ be a projected supermanifold, with projection given by $\pi : \mani \rightarrow \manir$, then one has
\bear
\mbox{\emph{Ber}}\, (\Omega^1_\mani) \cong \pi^\ast \left ( \det (\Omega^1_{\manir} )\otimes_{\mathcal{O}_{\manir}} (\det \mathcal{F}_\mani)^{\otimes -1 } \right )
\eear
\end{teo}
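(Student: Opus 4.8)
The plan is to apply the Berezinian functor to the split short exact sequence established immediately above,
\[
0 \longrightarrow \pi^\ast \Omega^1_{\manir} \longrightarrow \pi^\ast \Omega^1_{\manir} \oplus \pi^\ast \mathcal{F}_\mani \longrightarrow \pi^\ast \mathcal{F}_\mani \longrightarrow 0,
\]
and to exploit the one structural property of the Berezinian that makes it computable on such data: it is \emph{multiplicative} on short exact sequences of locally-free sheaves. Concretely, for any short exact sequence $0 \to \mathcal{E}' \to \mathcal{E} \to \mathcal{E}'' \to 0$ of locally-free $\stsheafm$-modules one has $\mbox{Ber}(\mathcal{E}) \cong \mbox{Ber}(\mathcal{E}') \otimes_{\stsheafm} \mbox{Ber}(\mathcal{E}'')$ (see \cite{Manin}). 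Applied to the sequence above this yields $\mbox{Ber}(\Omega^1_\mani) \cong \mbox{Ber}(\pi^\ast \Omega^1_{\manir}) \otimes_{\stsheafm} \mbox{Ber}(\pi^\ast \mathcal{F}_\mani)$, reducing the problem to computing the two factors.

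Next I would compute each factor separately, the key being that each is \emph{homogeneous} in parity. The sheaf $\pi^\ast \Omega^1_{\manir}$ is purely even of rank $p|0$: its transition matrices are even blocks $A$, the superdeterminant reduces to the ordinary determinant $\det(A)$, and hence $\mbox{Ber}(\pi^\ast \Omega^1_{\manir}) \cong \det(\pi^\ast \Omega^1_{\manir})$, the top exterior power. Dually, $\pi^\ast \mathcal{F}_\mani$ is purely odd of rank $0|q$ (recall that $\mathcal{F}_\mani$ is locally-free of rank $0|m$ over $\stsheafred$): its transition matrices are odd blocks $D$, and for a purely odd module the superdeterminant formula $\det(A - BD^{-1}C)\det(D)^{-1}$ collapses to $\det(D)^{-1}$. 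Therefore $\mbox{Ber}(\pi^\ast \mathcal{F}_\mani) \cong (\det \pi^\ast \mathcal{F}_\mani)^{\otimes -1}$, the \emph{inverse} of the ordinary top exterior power.

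Finally I would use that both $\det$ and $\mbox{Ber}$ commute with the pullback $\pi^\ast$ — immediate, since all these operations act on transition functions and $\pi^\ast$ simply pulls them back — so that $\det(\pi^\ast \Omega^1_{\manir}) \cong \pi^\ast \det(\Omega^1_{\manir})$ and $(\det \pi^\ast \mathcal{F}_\mani)^{\otimes -1} \cong \pi^\ast (\det \mathcal{F}_\mani)^{\otimes -1}$. Combining the three isomorphisms and using that $\pi^\ast$ is monoidal, i.e.\ commutes with $\otimes$, gives
\[
\mbox{Ber}(\Omega^1_\mani) \cong \pi^\ast \det(\Omega^1_{\manir}) \otimes_{\stsheafm} \pi^\ast (\det \mathcal{F}_\mani)^{\otimes -1} \cong \pi^\ast\!\left( \det(\Omega^1_{\manir}) \otimes_{\stsheafred} (\det \mathcal{F}_\mani)^{\otimes -1}\right),
\]
which is exactly the asserted isomorphism.

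The main obstacle is not computational but conceptual bookkeeping: one must correctly track the \emph{parity} contributions — the even summand contributing $+\det$ and the odd summand its inverse $-\det$ — and make sure the multiplicativity of the Berezinian is legitimately applicable. This last point is in fact automatic precisely because the sequence is split: $\Omega^1_\mani \cong \pi^\ast \Omega^1_{\manir} \oplus \pi^\ast \mathcal{F}_\mani$ has block-diagonal transition matrices $\mathrm{diag}(A,D)$, on which the superdeterminant is exactly $\det(A)\det(D)^{-1}$. Thus the splitting furnished by the preceding corollary, identifying $\pi^\ast \mathcal{F}_\mani$ with $\slantone{\Omega^1_\mani}{\pi^\ast \Omega^1_{\manir}}$, is precisely what removes the only genuine difficulty and makes the parity bookkeeping transparent.
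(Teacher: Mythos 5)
Your proof is correct and takes essentially the same route as the paper: the paper's proof likewise applies the multiplicativity of the Berezinian to the splitting $\Omega^1_\mani \cong \pi^\ast \Omega^1_{\manir} \oplus \pi^\ast \mathcal{F}_\mani$, identifies $\mbox{Ber}$ of the purely even summand with $\det$ and of the purely odd summand with $(\det)^{\otimes -1}$, and commutes these operations with $\pi^\ast$. Your explicit parity bookkeeping through block-diagonal transition matrices only spells out steps the paper leaves implicit.
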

\begin{proof} We have seen that in presence of a projection $\pi : \mani \rightarrow \manir$, one has that $\Omega^1_{\mani} \cong \pi^\ast \Omega^1_{\manir} \oplus \pi^\ast \mathcal{F}_{{\mani}} $, then it is enough to take the Berezinian of the both sides of the isomorphism. In particular, the right-hand side reads
\begin{align}
\mbox{Ber}\, (\pi^\ast \Omega^1_{\manir} \oplus \pi^\ast \mathcal{F}_{{\mani}} ) & \cong \mbox{Ber}\, \left ( \pi^\ast \Omega^1_{\manir} \right ) \otimes_{\mathcal{O}_\mani} \mbox{Ber}\, \left ( \pi^\ast \mathcal{F}_\mani \right ) \nonumber \\
& \cong \pi^\ast \left ( \det (\Omega^1_{\manir}) \otimes_{\mathcal{O}_{\manir}} ( \det \mathcal{F}_\mani )^{\otimes -1}  \right ), 
\end{align}
thus completing the proof.
\end{proof}
\noindent This result allows to evaluate the Berezinian of projected supermanifolds by means of completely classical elements: indeed, once there is a projection, what one needs is to know the canonical sheaf $\mathcal{K}_{\manir}\defeq \det (\Omega^1_{\manir})$ of the reduced manifold and the determinant sheaf $\det \mathcal{F}_\mani $ of the fermionic sheaf, that we recall it is a  (locally-free) sheaf of $\mathcal{O}_{\manir}$-modules, that is an object living on the reduced manifold! \\
Using this result, one can easily evaluate the Berezinian sheaf of a projective superspace $\proj {n|m},$ as the following theorem shows. 
\begin{teo}[Berezinian of $\proj {n|m}$ (1)] Let $\proj {n|m}$ be the $n|m$-dimensional projective superspace. Then
\bear
\mbox{\emph{Ber}} (\Omega^1_{\proj {n|m}})\cong \mathcal{O}_{\proj {n|m}} (m-n-1).
\eear
\end{teo}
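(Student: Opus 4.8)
The plan is to invoke Theorem~\ref{bersup} (Berezinian of a projected supermanifold), which applies directly because $\proj{n|m}$ is split --- hence in particular projected --- with reduced manifold $\manir = \proj n$ and projection $\pi : \proj{n|m} \rightarrow \proj n$. That theorem reduces the computation of $\mbox{Ber}(\Omega^1_{\proj{n|m}})$ to the pull-back along $\pi$ of two \emph{classical} objects living on $\proj n$: the canonical sheaf $\det(\Omega^1_{\proj n})$ and the inverse of the determinant of the fermionic sheaf $\mathcal{F}_{\proj{n|m}}$. So the whole argument is essentially an identification of these two pieces followed by a tensor product.

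First I would recall the standard fact that the canonical sheaf of ordinary projective space is $\det(\Omega^1_{\proj n}) \cong \mathcal{O}_{\proj n}(-n-1)$; this is classical (it follows at once from the ordinary Euler sequence) and requires no supergeometric input. The genuinely super step is to identify the fermionic sheaf $\mathcal{F}_{\proj{n|m}} = \mathcal{J}_{\proj{n|m}}/\mathcal{J}^2_{\proj{n|m}}$. Here I would simply read it off the $\mathcal{O}_{\proj n}$-module decomposition \eqref{fact}: its linear-in-$\theta$ component, i.e.\ the $k=0$ summand of the odd part, is precisely $\mathcal{O}_{\proj n}(-1)^{\oplus m}$. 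Equivalently, under the defining $\mathbb{C}^\ast$-action each odd coordinate carries weight one, so each descends to a section of $\mathcal{O}_{\proj n}(-1)$, giving the same conclusion.

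Given this, the determinant is immediate: $\det \mathcal{F}_{\proj{n|m}} \cong \det\big(\mathcal{O}_{\proj n}(-1)^{\oplus m}\big) \cong \mathcal{O}_{\proj n}(-m)$, whence $(\det \mathcal{F}_{\proj{n|m}})^{\otimes -1} \cong \mathcal{O}_{\proj n}(m)$. Substituting into Theorem~\ref{bersup} yields
\[
\mbox{Ber}(\Omega^1_{\proj{n|m}}) \cong \pi^\ast\big(\mathcal{O}_{\proj n}(-n-1)\otimes_{\mathcal{O}_{\proj n}}\mathcal{O}_{\proj n}(m)\big) \cong \pi^\ast \mathcal{O}_{\proj n}(m-n-1).
\]
The final step is to use that $\mathcal{O}_{\proj{n|m}}(\ell)$ is by construction the pull-back $\pi^\ast \mathcal{O}_{\proj n}(\ell)$, so the right-hand side is exactly $\mathcal{O}_{\proj{n|m}}(m-n-1)$, as claimed.

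I do not expect a serious obstacle here, since the heavy lifting has already been done in Theorem~\ref{bersup}. The only point requiring genuine care is the bookkeeping of the twist on the fermionic sheaf: one must get the multiplicity and the sign right so that, after inversion, $\det \mathcal{F}_{\proj{n|m}}$ contributes $\mathcal{O}_{\proj n}(+m)$ rather than $\mathcal{O}_{\proj n}(-m)$, and combines with the $-n-1$ from the canonical bundle to give the shift $m-n-1$. Everything else is a direct application of the structure theorem together with the classical value of the canonical sheaf of $\proj n$.
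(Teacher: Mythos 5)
Your proposal is correct and takes essentially the same approach as the paper: the paper's proof likewise rests on the split exact sequence $0 \to \pi^\ast \Omega^1_{\proj n} \to \Omega^1_{\proj {n|m}} \to \pi^\ast\big(\Pi \mathcal{O}_{\proj n}(-1)^{\oplus m}\big) \to 0$ underlying Theorem \ref{bersup}, uses multiplicativity of the Berezinian, and combines $\det \Omega^1_{\proj n} \cong \mathcal{O}_{\proj n}(-n-1)$ with $\big(\det \mathcal{F}_{\proj {n|m}}\big)^{\otimes -1} \cong \mathcal{O}_{\proj n}(m)$ before pulling back along $\pi$. The only cosmetic difference is that you invoke the formula of Theorem \ref{bersup} directly where the paper re-runs the Berezinian computation on the explicit split sequence, and your identification of $\mathcal{F}_{\proj {n|m}}$ as $\mathcal{O}_{\proj n}(-1)^{\oplus m}$ (via the decomposition \eqref{fact} or the weight-one $\mathbb{C}^\ast$-action on the odd coordinates) agrees with the paper's.
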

\begin{proof} In the case of $\proj {n|m}$ it boils down to consider the following split exact sequence
\bear
\xymatrix@R=1.5pt{ 
0 \ar[rr] && \pi^\ast \Omega^1_{\proj n} \ar[rr] && \Omega^1_{\proj {n|m}} \ar[rr] && \pi^\ast (\Pi \mathcal{O}_{\proj n} (-1)^{\oplus m}) \ar[rr] && 0.
 }
\eear 
Therefore, taking the Berezinian of the short exact sequence, one gets
\begin{align}
\mbox{Ber} (\Omega^1_{\proj {n|m}}) & \cong \mbox{Ber}\, \left (\pi^\ast \Omega^1_{\proj n} \oplus \pi^\ast \left ( \Pi \mathcal{O}_{\proj n}^{\oplus m} \right ) \right ) \nonumber \\
& \cong \mbox{Ber}(\pi^\ast \Omega^1_{\proj n}) \otimes_{\mathcal{O}_{\proj {n|m}}} \mbox{Ber} \left (\pi^\ast \left (\Pi \mathcal{O}_{\proj n} (-1)^{\oplus m} \right ) \right) \nonumber \\
& \cong \pi^{\ast} \left ( \mbox{det} (\Omega^1_{\proj n}) \otimes_{\mathcal{O}_{\proj n}} \left (\mbox{det} (\mathcal{O}_{\proj n} (-1)^{\oplus m}) \right )^{\otimes -1}  \right ) \nonumber \\
& \cong \pi^\ast \left ( \mathcal{O}_{\proj n} (-1-n) \otimes_{\mathcal{O}_{\proj n}} \mathcal{O}_{\proj n} (m) \right ) \nonumber \\
& \cong \pi^\ast \left ( \mathcal{O}_{\proj n} (m-n-1)\right ) \nonumber \\
& \cong \mathcal{O}_{\proj {n|m}} (m-n-1),
\end{align}
that yields the conclusion. 
\end{proof}
\subsection{Construction: a \virgolette Super'' First Chern Class} The above result, Theorem \eqref{bersup}, allows to define a supersymmetric analog for the Chern class of an ordinary supermanifold, at least in the case we are dealing with a projected supermanifold. Indeed, if we define 
\bear
c^s_1 (\Omega_\mani^1) \defeq c_1 (\det \Omega^1_{\manir}) - c_1 (\det \mathcal{F}_\mani ),
\eear
then, by imitating the usual definition for an ordinary reduced variety, it makes sense to put 
\bear
c_1^s (\mani) \defeq -c^s_1( \Omega_\mani^1).
\eear
Notice that this reduces to the usual definition of Chern class of a variety in case we set the odd part to zero (recall that $\mathcal{F}_\mani \subseteq \mathcal{J}_\mani$), that is we have $c_1^s (\manir) = c_1 (\manir) = - c_1 (\det \Omega^1_{\manir}).$ \\
This construction immediately gives the following corollary.
\begin{cor}[Super Chern Class of $\proj {n|m}$] Let $\proj {n|m}$ a projective superspace. Then we have
\bear
c_1^s (\proj {n|m}) = n + 1 - m. 
\eear
\end{cor}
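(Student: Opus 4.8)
The plan is to evaluate the two classical Chern classes appearing in the definition $c_1^s(\proj{n|m}) = -c_1^s(\Omega^1_\mani) = -\bigl(c_1(\det\Omega^1_{\manir}) - c_1(\det\mathcal{F}_\mani)\bigr)$, where the reduced manifold here is $\manir = \proj n$ and the fermionic sheaf is the one read off from the projection. First I would recall that on ordinary projective space the cotangent sheaf $\Omega^1_{\proj n}$ has determinant equal to the canonical sheaf $\mathcal{K}_{\proj n} \cong \mathcal{O}_{\proj n}(-n-1)$, so that $c_1(\det\Omega^1_{\proj n}) = c_1(\mathcal{O}_{\proj n}(-n-1)) = -(n+1)$ under the standard identification of $\mbox{Pic}(\proj n) \cong \mathbb{Z}$ with the degree.

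Next I would identify the fermionic sheaf of $\proj{n|m}$. From the splitting already used in Theorem \ref{bersup} and in the Berezinian computation, the relevant odd piece is $\mathcal{F}_{\proj{n|m}} \cong \Pi\,\mathcal{O}_{\proj n}(-1)^{\oplus m}$, a locally-free sheaf of $\mathcal{O}_{\proj n}$-modules of rank $0|m$. Taking its determinant gives $\det\mathcal{F}_{\proj{n|m}} \cong \mathcal{O}_{\proj n}(-1)^{\otimes m} \cong \mathcal{O}_{\proj n}(-m)$, hence $c_1(\det\mathcal{F}_\mani) = -m$.

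Assembling these, I would compute
\[
c_1^s(\proj{n|m}) = -\bigl(c_1(\det\Omega^1_{\proj n}) - c_1(\det\mathcal{F}_\mani)\bigr) = -\bigl(-(n+1) - (-m)\bigr) = n+1-m,
\]
which is exactly the claimed value. As a consistency check I would note that this agrees with the Berezinian computation $\mbox{Ber}(\Omega^1_{\proj{n|m}}) \cong \mathcal{O}_{\proj{n|m}}(m-n-1)$: since the super first Chern class should be the negative of the "degree" of the Berezinian (the super analog of the canonical sheaf), one recovers $-(m-n-1) = n+1-m$.

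The computation is essentially routine, so there is no serious obstacle; the only point requiring a little care is the bookkeeping of signs and the correct identification of the fermionic sheaf's determinant. The main subtlety is purely conventional: one must be consistent about whether the parity-reversal $\Pi$ affects the degree of $\det\mathcal{F}_\mani$ (it does not, since $\det$ of a rank $0|m$ sheaf is an ordinary line bundle on $\proj n$) and about the sign convention relating $c_1^s(\mani)$ to $c_1^s(\Omega^1_\mani)$, which the preceding construction fixes. Once these are pinned down, the result follows by direct substitution.
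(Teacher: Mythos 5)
Your proposal is correct and follows essentially the same route as the paper: identify $\mathcal{F}_{\proj{n|m}} \cong \mathcal{O}_{\proj n}(-1)^{\oplus m}$, compute $c_1(\det\Omega^1_{\proj n}) = -(n+1)$ and $c_1(\det\mathcal{F}_{\proj{n|m}}) = -m$, and substitute into the definition $c_1^s(\mani) = -c_1^s(\Omega^1_\mani)$. Your added consistency check against $\mbox{Ber}(\Omega^1_{\proj{n|m}}) \cong \mathcal{O}_{\proj{n|m}}(m-n-1)$ is a nice touch but not part of the paper's argument.
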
 
\begin{proof} Since we have that $\mathcal{F}_{\proj {n|m}} = \mathcal{O}_{\proj n} (-1)^{\oplus m} $, we have that 
\begin{align}
c_1^s (\proj {n|m} ) & = - c_1 (\det \Omega^1_{\proj n} ) + c_1 (\mathcal{O}_{\proj n} (-1)^{\oplus m})  \\
& =  - c_1 (\mathcal{O}_{\proj n} (-1-n)) +  m \cdot c_1 (\mathcal{O}_{\proj n} (-1)) \\
& = n + 1 - m, 
\end{align}
that proves the corollary.
\end{proof}
\noindent Actually, as in \cite{CNR} \cite{1DCY}, one can reasonably define a supermanifold $\mani$ to be a Calabi-Yau if it has trivial Berezinian sheaf. We see that, in the case of projected supermanifolds, Theorem \eqref{bersup} reduces the task of verifying the triviality of the Berezianian sheaf to computations on the reduced manifolds, and likewise the super first Chern class defined above give a useful numerical criteria (notice though, that a super-analog of Yau's Theorem does not hold in general, see \cite{1DCY}: so, as usual in supergeometry, some attention is required when addressing questions of this kind).
\begin{ttes}[$\proj {n|n+1}$ is a Calabi-Yau Supermanifold] a well-known fact that can be easily red off the theory developed above is that in the case of a projective superspace $\proj {n|m}$ one satisfies the Calabi-Yau condition choosing $m= n+1$: in other words $\proj {n|n+1}$ for any $n >1$ has trivial Berezinian sheaf and vanishing super first Chern class.
\end{ttes}

\subsection{A glimpse to de Rham Cohomology of $\proj {n|m}$}

For the sake of completeness and in order to give a self-contained and exhaustive treatment of the geometry of projective superspaces, we also report a result concerning the de Rham cohomology of $\proj {n|m}$, referring to \cite{1DCY} for details.
\begin{teo}[de Rham Cohomology of $\proj {n|m}$] The de Rham cohomology of $\proj {n|m}$ is given by
\bear
H_{dR}^{i;j} (\proj{n|m}) = \left \{ \begin{array}{lll} \mathbb R^{m\choose{j}} & &  i= 2k,\ k=0,\ldots,n, \; j=0,\ldots,m, \\
0 & & i=2k+1,\ k=0,\ldots, n-1, \; j=0,\ldots,m.
\end{array}
\right.
\eear
In particular, the generators are given by 
\bear
\omega_{k,I_j}\defeq \wedge^k \omega_{FS} \otimes \bigwedge_{\ell\in I_j}\theta_\ell \delta(d\theta_\ell),
\eear
where $I_j\subseteq \{0,1,\ldots,m\}$ has cardinality $j$, and $\omega_{FS}$ is the ordinary Fubini-Study form.
\end{teo}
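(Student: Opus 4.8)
The plan is to use that $\proj {n|m}$ is \emph{split}, so that the projection $\pi:\proj {n|m}\to\proj n$ exhibits it as a fibre bundle with purely odd fibre $\mathbb{C}^{0|m}$, and to compute the bigraded cohomology by a Künneth/Leray argument that reduces everything to the classical de Rham cohomology of $\proj n$ together with a one--variable odd computation. I work in the complex of integral forms (equivalently pseudoforms), bigraded by the form degree $i$ and the picture number $j$, i.e. the number of distributions $\delta(d\theta_\ell)$ appearing, with the usual de Rham differential $d$. Recall the degree conventions: a function such as $\theta$ has bidegree $(0;0)$, the even object $d\theta$ has bidegree $(1;0)$, while each $\delta(d\theta_\ell)$ carries bidegree $(0;1)$ and $\delta^{(b)}(d\theta_\ell)$ has bidegree $(-b;1)$; in particular $\theta_\ell\,\delta(d\theta_\ell)$ is a closed object of bidegree $(0;1)$, and $\wedge^k\omega_{FS}$ has bidegree $(2k;0)$, so that the candidate generators $\omega_{k,I_j}$ indeed sit in bidegree $(2k;j)$.

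First I would record the two building--block computations on the odd line $\mathbb{C}^{0|1}$, where $d=d\theta\,\partial_\theta$. In picture $0$, acting on $\mathbb{C}[\theta][d\theta]$, one checks that the $(d\theta)^b$ with $b\ge 1$ are exact, whence the cohomology is $\mathbb{R}$ concentrated in bidegree $(0;0)$ and generated by the constant $1$; in picture $1$, using $d\theta\,\delta^{(b)}(d\theta)=-b\,\delta^{(b-1)}(d\theta)$, one finds that all the $\delta^{(b)}(d\theta)$ are exact and the cohomology is again $\mathbb{R}$, now concentrated in bidegree $(0;1)$ and generated by $\theta\,\delta(d\theta)$. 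A super Künneth theorem across the $m$ odd directions then yields
\[
H^{b;j}_{dR}(\mathbb{C}^{0|m})=\begin{cases}\mathbb{R}^{\binom{m}{j}}& b=0,\\ 0& b\neq 0,\end{cases}
\]
where the picture--$j$ classes are indexed by the subsets $I_j\subseteq\{1,\dots,m\}$ of cardinality $j$ and represented by $\bigwedge_{\ell\in I_j}\theta_\ell\,\delta(d\theta_\ell)$, the directions outside $I_j$ contributing the constant $1$.

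Next I would glue this over the base via the Leray spectral sequence of $\pi$, whose second page is $E_2^{p,q}=H^p_{dR}(\proj n)\otimes\mathcal{H}^{q;j}$, with $\mathcal{H}^{q;j}$ the local system of fibrewise picture--$j$ cohomology. The crucial point is that this local system is \emph{constant}: since the fermionic sheaf of $\proj {n|m}$ is $\mathcal{O}_{\proj n}(-1)^{\oplus m}$, the odd coordinates transform by the transition functions $g$ of $\mathcal{O}(-1)$, so $\theta_\ell\mapsto g\,\theta_\ell$ and $\delta(d\theta_\ell)\mapsto g^{-1}\delta(d\theta_\ell)+(\text{lower order in }d\theta)$; using $\theta_\ell^2=0$ the generator $\theta_\ell\,\delta(d\theta_\ell)$ is invariant in each direction, and its products define global classes. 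Because the fibre cohomology is concentrated in $q=0$, every higher differential $d_r$ lands in a zero group, the spectral sequence degenerates at $E_2$, and one obtains $H^{i;j}_{dR}(\proj {n|m})\cong H^{i}_{dR}(\proj n)\otimes\mathbb{R}^{\binom{m}{j}}$. Substituting the classical values $H^{2k}_{dR}(\proj n)=\mathbb{R}\langle\wedge^k\omega_{FS}\rangle$ for $0\le k\le n$ and $H^{\mathrm{odd}}_{dR}(\proj n)=0$ then gives exactly the stated dimensions, with generators $\omega_{k,I_j}=\wedge^k\omega_{FS}\otimes\bigwedge_{\ell\in I_j}\theta_\ell\,\delta(d\theta_\ell)$.

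The hard part will be foundational rather than combinatorial, and is in twofold form. First, making the picture $\geq 1$ computations rigorous: one must control the distributional complex generated by the $\delta^{(b)}(d\theta)$ and verify that $d$ has the claimed cohomology there, with no spurious classes hidden in the completion. Second, one must justify the super Künneth theorem and the Leray spectral sequence within the category of integral/pseudoforms, where the naive tensor product has to be completed to accommodate the distributions $\delta(d\theta_\ell)$ and the $\mathcal{O}_{\manir}$--twisting of the fibre must be seen to be trivial over $\mathbb{R}$. Once the integral--form bicomplex and its multiplicative structure are set up, the degeneration argument and the final bookkeeping are immediate; this is precisely why the detailed foundations are deferred to \cite{1DCY}.
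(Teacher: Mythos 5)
The paper does not actually prove this theorem: it is quoted as a known result, with all details deferred to \cite{1DCY}, where the computation (carried out there in detail for the supercurves $\proj{1|m}$) proceeds by direct analysis of the complexes of superforms and integral forms. Your proposal is therefore necessarily a different route, and it is a sound one. The two odd-line computations check out: $d\left(\theta\,(d\theta)^{b-1}\right)=(d\theta)^{b}$ kills all positive form degrees in picture zero, and $d\left(\theta\,\delta^{(b)}(d\theta)\right)=-b\,\delta^{(b-1)}(d\theta)$ shows every $\delta^{(b)}(d\theta)$ is exact while $\theta\,\delta(d\theta)$ survives. The K\"unneth step is legitimate because the algebraic K\"unneth theorem holds for arbitrary complexes of vector spaces over a field, once one checks that the chart-wise picture-$j$ pseudoform module on $\mathbb{C}^{0|m}$ really is the direct sum, over choices of $j$ odd directions, of tensor products of the one-variable complexes (it is, in the coordinate basis of the $d\theta_\ell$, which is the natural spanning set in a fixed chart). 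Your invariance argument is also the correct mechanism: under $\theta_\ell\mapsto g\,\theta_\ell$ one has $\delta(d\theta_\ell)\mapsto g^{-1}\sum_k \frac{1}{k!}\left(\theta_\ell\, dg/g\right)^k\delta^{(k)}(d\theta_\ell)$, and all higher terms die against $\theta_\ell^2=0$, so $\theta_\ell\,\delta(d\theta_\ell)$ is strictly invariant; note moreover that on $\proj{n|m}$ the transition functions scale all the odd coordinates by the \emph{same} scalar, so you never have to confront mixing of the odd directions, and constancy of the local system is immediate. Two points you should make explicit: convergence of the Leray spectral sequence is unproblematic even though the fibre complex is unbounded in form degree at intermediate picture, because the filtration by base degree has finite length $0\le p\le 2n$; and concentration of the fibre cohomology in $q=0$ makes degeneration at $E_2$ automatic, as you say. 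What your approach buys, compared with the direct computation in \cite{1DCY}, is uniformity in $n$ — the cited source treats the one-dimensional case, whereas your fibration argument covers all $\proj{n|m}$ at once — at the price of the foundational overhead you honestly flag (setting up the pseudoform bicomplex, its multiplicative structure, and the Leray machinery in that category); the direct computation avoids the machinery but does not scale as cleanly. One cosmetic remark: the index set in the statement should be $I_j\subseteq\{1,\ldots,m\}$ (the paper's $\{0,1,\ldots,m\}$ is evidently a typo), and your generators $\wedge^k\omega_{FS}\otimes\bigwedge_{\ell\in I_j}\theta_\ell\,\delta(d\theta_\ell)$ agree with the stated ones in bidegree $(2k;j)$.
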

\noindent Notice that in the supergeometric setting the de Rham cohomology depends in general on \emph{two} numbers, $i$ and $j$ above, and indeed the de Rham complex in supergeometry is not really a complex, but a bicomplex instead. The first number, denoted as $i$, refers to the actual \emph{degree} of the forms and for $j=0$ it is indeed just the analog of the usual degree of a differential form. The second one, denoted with $j$, is instead a supersymmetric novelty, indeed it refers to the so-called \emph{picture number} associated to a form in supergeometry. Loosely speaking, the picture number of a form tells the number of expressions of the kind $\delta (d\theta_\ell)$ that appear. These, in turn, are needed to make the complex bounded from above in the case $j = m$, something which is crucial for integration on supermanifolds. We will not dwell into these deep and important issues any further though, as they are not directly related to the main goals of the paper. The interested reader might want to look at \cite{Cat2} \cite{1DCY} \cite{IntW} for details and an explanation of the set of problems concerning differential forms and integration on supermanifolds.

\section{Euler Sequence and the Cohomology of Tangent Sheaf of $\proj {n|m}$}

\noindent The main tool that we will exploit here to compute the cohomology of the tangent bundle of the projective super space $\mathbb{P}^{n|m}$ is a generalisation to a supergeometric setting of the ordinary Euler exact sequence, that reads
\bear \label{euler}
\xymatrix{
0 \ar[r] & \mathcal{O}_{\proj {n|m}}  \ar[r] &  \mathcal{O}_{\proj {n|m}} (+1) \otimes \mathbb{C}^{n+1|m}  \ar[r] & \mathcal{T}_{\proj {n|m}} \ar[r]  & 0,
}  
\eear  
we will write $\mathcal{O}_{\proj {n|m}} (1)^{\oplus n+1 |m } = \mathcal{O}_{\proj {n|m}} (+1) \otimes \mathbb{C}^{n+1|m} $. \\
First we notice that this has an easy consequence: 
\begin{teo}[Berezinian of $\proj {n|m}$ (2)] \label{ber2} Let $\proj {n|m}$ be the $n|m$-dimensional projective superspace. Then
\bear
\mbox{\emph{Ber}} (\Omega^1_{\proj {n|m}})\cong \mathcal{O}_{\proj {n|m}} (m-n-1).
\eear
\end{teo}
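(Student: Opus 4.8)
The plan is to feed the Euler exact sequence \eqref{euler} into the Berezinian functor and exploit the fact that the Berezinian is multiplicative along short exact sequences of locally-free sheaves: for $0 \to \mathcal{A} \to \mathcal{B} \to \mathcal{C} \to 0$ one has $\mbox{Ber}(\mathcal{B}) \cong \mbox{Ber}(\mathcal{A}) \otimes_{\mathcal{O}_{\proj{n|m}}} \mbox{Ber}(\mathcal{C})$. Applying this to \eqref{euler}, with $\mathcal{A} = \mathcal{O}_{\proj{n|m}}$, $\mathcal{B} = \mathcal{O}_{\proj{n|m}}(+1)\otimes\mathbb{C}^{n+1|m}$ and $\mathcal{C} = \mathcal{T}_{\proj{n|m}}$, yields $\mbox{Ber}(\mathcal{B}) \cong \mbox{Ber}(\mathcal{O}_{\proj{n|m}}) \otimes \mbox{Ber}(\mathcal{T}_{\proj{n|m}})$, so that the whole computation reduces to evaluating the Berezinian of the two flanking terms and then dualising.

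The two flanking Berezinians are immediate. The subsheaf $\mathcal{A} = \mathcal{O}_{\proj{n|m}}$ is invertible of rank $1|0$, hence its Berezinian is just its ordinary determinant, namely $\mathcal{O}_{\proj{n|m}}$ itself. For the free middle term I would split it according to parity as $\mathcal{O}_{\proj{n|m}}(+1)\otimes\mathbb{C}^{n+1|m} = \mathcal{O}_{\proj{n|m}}(1)^{\oplus(n+1)} \oplus \Pi\,\mathcal{O}_{\proj{n|m}}(1)^{\oplus m}$ and use that on a free module the Berezinian is the determinant of the even part tensored with the inverse determinant of the odd part. This gives $\mbox{Ber}(\mathcal{B}) \cong \mathcal{O}_{\proj{n|m}}(n+1) \otimes \mathcal{O}_{\proj{n|m}}(-m) \cong \mathcal{O}_{\proj{n|m}}(n+1-m)$.

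Combining the two steps gives $\mbox{Ber}(\mathcal{T}_{\proj{n|m}}) \cong \mathcal{O}_{\proj{n|m}}(n+1-m)$, and since $\Omega^1_{\proj{n|m}} = (\mathcal{T}_{\proj{n|m}})^\vee$ and the Berezinian of a dual is the inverse of the Berezinian, I conclude $\mbox{Ber}(\Omega^1_{\proj{n|m}}) \cong \mathcal{O}_{\proj{n|m}}(m-n-1)$, which is exactly the asserted identity (and agrees, as a sanity check, with Theorem \ref{bersup} specialised to $\proj{n|m}$, proved there via the splitting of $\Omega^1$). The main obstacle — really the only nontrivial point — is justifying the multiplicativity of the Berezinian along the short exact sequence \eqref{euler}: locally the sequence splits and multiplicativity on direct sums is the defining property of $\mbox{Ber}$, but one must check that the gluing data are compatible, i.e.\ that the Berezinian of the transition cocycle respects the block-triangular form of the transition matrices induced by \eqref{euler}. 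Once this is granted, the remaining parity bookkeeping in the free term is entirely routine.
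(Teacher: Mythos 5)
Your proposal is correct and is essentially the paper's own argument: the paper applies the multiplicativity of $\mbox{Ber}$ to the \emph{dualised} Euler sequence $0 \to \Omega^1_{\proj{n|m}} \to \mathcal{O}_{\proj{n|m}}(-1)^{\oplus n+1|m} \to \mathcal{O}_{\proj{n|m}} \to 0$ (where the quotient has trivial Berezinian), whereas you apply it to the Euler sequence itself to get $\mbox{Ber}(\mathcal{T}_{\proj{n|m}}) \cong \mathcal{O}_{\proj{n|m}}(n+1-m)$ and dualise the Berezinian at the end --- the same computation, merely rearranged. Your parity bookkeeping on the free middle term and the concluding step $\mbox{Ber}(\Omega^1_{\proj{n|m}}) \cong \mbox{Ber}(\mathcal{T}_{\proj{n|m}})^{\otimes -1} \cong \mathcal{O}_{\proj{n|m}}(m-n-1)$ are correct and match the paper's result.
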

\begin{proof} We consider the dual of the super version of the Euler exact sequence in \eqref{euler}, that is
\bear
\xymatrix{
0 \ar[r] & \Omega^1_{\proj {n|m}}  \ar[r] &  \mathcal{O}_{\proj {n|m}} (- 1)^{\oplus n+1|m}  \ar[r] & \mathcal{O}_{\proj {n|m}} \ar[r]  & 0.
}  
\eear
Since $\mbox{Ber} (\mathcal{O}_{\proj {n|m}})$ is trivial, taking into account the multiplicative behaviour of the Berezinian with respect to exact sequence, one finds
\begin{align}
\mbox{Ber} (\Omega^1_{\proj {n|m}}) & \cong \mbox{Ber}\, (\mathcal{O}_{\proj {n|m}} (-1)^{\oplus n+1|m}) \cong \mbox{Ber}\, \left (\mathcal{O}_{\proj {n|m}} (-1)^{\oplus n+1} \oplus \Pi \mathcal{O}_{\proj {n|m}} (-1)^{\oplus m} \right ) \nonumber \\ 
& \mathcal{O}_{\proj {n|m}} (-n-1) \otimes_{\mathcal{O}_{\proj {n|m}}} \mathcal{O}_{\proj {n|m}} (m) \cong \mathcal{O}_{\proj {n|m}} (m-n-1),
\end{align}
which concludes the proof.
\end{proof}
\noindent Notice that is nothing but an easier way to get to the same result we obtained above for $\proj {n|m}$. \vspace{.5cm}
 
We now look at the cohomology exact sequence associated to the Euler exact sequence. We find the long exact sequence 
\begin{align} 
\xymatrix@R=.5cm{ 
0 \ar[r] &   H^0 (\mathcal{O}_{\proj {n|m}}) \ar[r]^{\tilde{e}_0} & H^0(\mathcal{O}_{\proj {n|m}} (1)^{\oplus n+1|m} ) \ar[r] & H^0 (\mathcal{T}_{\proj {n|m}} ) \ar[r] & H^1(\mathcal{O}_{\proj {n|m}}) \ar[r] & \ldots \nonumber \\
\ldots \ar[r] & H^{n-1} (\mathcal{T}_{\proj {n|m}} ) \ar[r] & H^n(\mathcal{O}_{\proj {n|m}}) \ar[r]^{\tilde{e}_n} & H^n (\mathcal{O}_{\proj {n|m}} (1)^{\oplus n+1|m} ) \ar[r] & H^n (\mathcal{T}_{\proj {n|m}}) \ar[r] & 0.
}   
\end{align}
These are the only relevant parts of the long exact sequence in cohomology associated to the Euler sequence, since, considering the $\mathcal{O}_{\proj n}$-module structure of the sheaf of algebras $\mathcal{O}_{\proj {n|m}}$ obtained by the projection map $\pi : \proj {n|m} \rightarrow \proj n$, one has the factorisation in a direct sum as in \eqref{fact} and Theorem \ref{dimensions} holds true.\\
Actually, the map $\tilde e_n : H^n(\mathcal{O}_{\proj {n|m}}) \longrightarrow H^n(\mathcal{O}_{\proj {n|m}} (1)\otimes \mathbb{C}^{n+1|m}) $ in cohomology deserves some special attention, indeed the following theorem holds true, as Riccardo Re explained to us.  
\begin{teo} \label{teoCY} The map 
\begin{equation}\label{eq:maxrkmap}
\tilde e_n: H^n(\mathcal{O}_{\proj {n|m}}) \longrightarrow H^n(\mathcal{O}_{\proj {n|m}} (1)\otimes \mathbb{C}^{n+1|m}).
\end{equation}
has maximal rank. In particular it is injective if $m \neq n+1$.
\end{teo}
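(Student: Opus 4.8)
The plan is to compute everything explicitly through \v{C}ech cohomology on the standard affine cover $\{U_i\}_{i=0}^n$ of $\proj n$, which also covers $\proj {n|m}$ since the two share the same underlying topological space. First I would record the description of the top cohomology: a class in $H^n(\mathcal{O}_{\proj {n|m}}(\ell))$ is represented on the $(n+1)$-fold intersection $U_0 \cap \cdots \cap U_n$ by a $\mathbb{C}$-linear combination of monomials $x_0^{a_0}\cdots x_n^{a_n}\,\xi^{[\underline s]}$, with $\xi^{[\underline s]} = \xi_{j_1}\cdots \xi_{j_k}$, subject to $a_i \le -1$ for every $i$ and total degree $\sum_i a_i + |[\underline s]| = \ell$. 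This is the classical top-cohomology description decorated by odd monomials, and it is consistent with the $\mathcal{O}_{\proj n}$-module splitting \eqref{fact} used throughout.

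Next I would identify the map. The first arrow of the Euler sequence \eqref{euler} is determined $\mathcal{O}$-linearly by $1 \mapsto \sum_i x_i \otimes e_i + \sum_j \xi_j \otimes \epsilon_j$, where $\{e_i\}$ and $\{\epsilon_j\}$ are the even and odd bases of $\mathbb{C}^{n+1|m}$; so by functoriality $\tilde e_n$ sends a class $\alpha$ to $\sum_i (x_i\alpha)\otimes e_i + \sum_j (\xi_j\alpha)\otimes \epsilon_j$. Its kernel therefore consists of the $\alpha$ with $x_i\alpha = 0$ for all $i$ and $\xi_j\alpha = 0$ for all $j$ inside $H^n(\mathcal{O}_{\proj {n|m}}(1))$.

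The core step is the kernel analysis. Multiplication by $x_i$ raises the $i$-th exponent by one, and a monomial survives in $H^n(\mathcal{O}(1))$ precisely when all its even exponents remain $\le -1$; hence a monomial with $a_i \le -2$ maps to a nonzero, and distinct, basis element, while one with $a_i = -1$ dies. Imposing $x_i\alpha = 0$ for every $i$ thus forces every monomial of $\alpha$ to have $a_0 = \cdots = a_n = -1$, which by the degree constraint forces $|[\underline s]| = n+1$. Then, for such a monomial $x_0^{-1}\cdots x_n^{-1}\xi^{[\underline s]}$ and any index $j \notin [\underline s]$, the product $\xi_j\cdot x^{-1}\xi^{[\underline s]}$ is a nonzero class $\pm\, x^{-1}\xi^{[\underline s]\cup\{j\}}$ in $H^n(\mathcal{O}(1))$ as soon as $|[\underline s]\cup\{j\}| = n+2 \le m$. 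So if $m \ge n+2$ there is always such a $j$, the coefficient must vanish, and the kernel is zero; the argument is parity-blind, giving injectivity on both the even and the odd parts.

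Finally I would dispose of the remaining cases to upgrade injectivity to maximal rank everywhere. If $m \le n$ the constraint $|[\underline s]| \ge n+1$ cannot be met, so $H^n(\mathcal{O}_{\proj {n|m}}) = 0$ and the statement is vacuous. If $m = n+1$ the same constraint on the target forces $H^n(\mathcal{O}_{\proj {n|m}}(1)) = 0$, while the source is the one-dimensional space spanned by $x_0^{-1}\cdots x_n^{-1}\xi_1\cdots\xi_{n+1}$ (the Calabi--Yau top class); hence $\tilde e_n$ is the zero map between spaces of super-dimension $1|0$ (or $0|1$) and $0|0$, which has maximal rank while failing to be injective, exactly the exceptional behaviour singled out in the statement. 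The main obstacle is the bookkeeping in the kernel analysis: tracking which shifted monomials stay cohomologically nonzero and verifying that the odd-multiplication step genuinely produces a surviving class, which is precisely where the numerical threshold $m = n+1$ enters.
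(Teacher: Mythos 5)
Your proposal is correct, and it reaches the conclusion by a genuinely different route from the paper's. The paper dualizes: invoking Serre duality for supermanifolds (citing \cite{VoMaPe}, with dualizing sheaf the Berezinian $\mbox{Ber}(\Omega^1_{\proj{n|m}}) \cong \mathcal{O}_{\proj{n|m}}(m-n-1)$ from Theorem \ref{ber2}), it transposes $\tilde e_n$ into the multiplication map $H^0(\mathcal{O}_{\proj {n|m}}(m-n-2)\otimes \mathbb{C}^{n+1|m}) \to H^0(\mathcal{O}_{\proj {n|m}}(m-n-1))$, identifies these spaces with the symmetric powers $Sym^{m-n-2}(\mathcal{U}_{n+1|m})$ and $Sym^{m-n-1}(\mathcal{U}_{n+1|m})$ of the dual coordinate superspace, and recognizes the dual of $\tilde e_n$ as the super gradient $\tilde \nabla_{(X^*_i,\Theta^*_j)}$, whose kernel is visibly the constants, present exactly when $m-n-1=0$. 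You instead stay on the cohomology side and compute directly in the monomial basis $x_0^{a_0}\cdots x_n^{a_n}\xi^{[\underline s]}$, $a_i \le -1$, of top \v{C}ech cohomology: multiplication by $x_i$ either kills a basis monomial (when $a_i=-1$) or sends it injectively to another basis monomial, pinning any kernel element to $a_0=\cdots=a_n=-1$ and hence $|[\underline s]|=n+1$, and then multiplication by some $\xi_j$ with $j\notin [\underline s]$ (available precisely when $m\ge n+2$) finishes the job; your handling of the exceptional case is in fact more explicit than the paper's, since you exhibit the source as the line spanned by $x_0^{-1}\cdots x_n^{-1}\xi_1\cdots\xi_{n+1}$ and check that the target $H^n(\mathcal{O}_{\proj{n|m}}(1))$ vanishes for $m=n+1$, so the zero map still has maximal rank. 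What each approach buys: yours is elementary and self-contained, requiring no super Serre duality and making every step verifiable by hand on the standard cover, while the paper's is shorter once duality is granted, conceptually locates the failure of injectivity in the constants of $Sym^0$ (i.e.\ exactly the Calabi--Yau condition $m=n+1$), and reuses the Berezinian computation already established. One sentence worth adding to your kernel analysis for the $\xi_j$ step: distinct monomials $\xi^{[\underline t]}$ with $j\notin[\underline t]$ map to distinct monomials $\xi^{[\underline t]\cup\{j\}}$, so no cancellation among images can rescue a nonzero coefficient --- the same no-collision remark you made explicit for the $x_i$'s.
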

\begin{proof} We use the Serre duality on a supermanifold (see \cite{VoMaPe}, Proposition 3, for a thorough discussion). The dualising sheaf of $\proj {n |m}$ is given by $\mbox{Ber} (\Omega^1_{\proj {n|m}})$, that is the so called Berezinian sheaf of $\proj {n |m}$: in Theorem \eqref{ber2} we have shown that it is given by $\mathcal{O}_{\proj {n|m}} (m-n-1)$. Given a sheaf $\mathcal{E}_{\proj {n|m}}$ of $\mathcal{O}_{\proj{n|m}}$-module, Serre duality then reads 
\bear
H^i (\mathcal{E}_{\proj {n|m}}) \cong H^{n-i} (\mathcal{E}^\vee_{\proj {n|m}} \otimes \mathcal{O}_{\proj {n|m}} (m-n-1))^{\vee} 
\eear 
By functoriality of Serre duality, we see therefore that the map (\eqref{eq:maxrkmap}) can be written as
\begin{equation}\label{eq:serredualized}
\tilde e_n : H^0(\mathcal{O}_{\proj {n|m}}(m-n-1))^\vee \longrightarrow H^0(\mathcal{O}_{\proj {n|m}}(m-n-2) \otimes \mathbb{C}^{n+1|m})^\vee,
\end{equation} 
which is the dual to the map $ H^0(\mathcal{O}_{\proj {n|m}}(m-n-2)\otimes \mathbb{C}^{n+1|m}) \stackrel{(X_0,\ldots,\Theta_m)}{\longrightarrow} H^0(\mathcal{O}_{\proj {n|m}}(m-n-1))$ defined by multiplication of matrices of global sections. \\
Setting $X^*_i$ and $\Theta_j^*$ to be the \emph{dual} base to $\langle X_0,\ldots, X_n, \Theta_1, \ldots \Theta_m \rangle$, that generates the vector superspace $H^0 (\mathcal{O}_{\proj {n|m}} (1))$, we can consider the superspace $\mathcal{U}_{n+1|m}$, spanned by $\langle X^*_0,\ldots, X^*_n,\Theta^*_1,\ldots,\Theta^*_m \rangle$, and we may write 
\bear
\begin{array}{l}
H^0(\mathcal{O}_{\proj {n|m}}(m-n-1))^\vee=Sym^{m-n-1}\,(\mathcal{U}_{n+1|m})\\ 
H^0(\mathcal{O}_{\proj {n|m}}(m-n-2))^\vee = Sym^{m-n-2}\,(\mathcal{U}_{n+1|m}),
\end{array} 
\eear
where $Sym$ denotes the symmetric power functor in the supercommutative setting. In other words, this actually means that we are writing these spaces as the superspace of the homogeneous forms in $X^*_i,\Theta^*_j$ of global degrees $m-n-1$ and $m-n-2$, respectively.
As usual, the dual operation to the multiplication by a variable $X^*_i$ or $\Theta^*_j$, is the derivation $\partial_{X^*_i}$ or $\partial_{\Theta^*_j}$, respectively. Therefore the map (\eqref{eq:serredualized}) can be written as the {\em super gradient map}
\bear
\xymatrix{ 
\tilde e_n : Sym^{m-n-1} ( \mathcal{U}_{n+1|m} ) \ar[rr]^{\tilde \nabla_{(X^*_i,\Theta^*_j)}} &&  Sym^{m-n-2} (\mathcal{U}_{n+1|m} \otimes \mathbb{C}^{n+1|m}).
} 
\eear
where the super gradient map is given by 
\bear
\tilde \nabla_{(X^*_i,\Theta^*_j)}  
\defeq 
\left ( \begin{array}{c} 
\partial_{X_0^*} \\
\vdots \\
\partial_{X_n^*} \\
-\partial_{\Theta_1^*} \\
\vdots \\
- \partial_{\Theta_m^*}
\end{array}
\right )
\eear
where the minus signs in front of the odd derivatives are due to the super transposition.\\
Now it is obvious by inspection that this map has non-zero kernel if and only if $m=n+1$, in which case the first space consists in the constant homogeneous forms, and the second space is zero.
\end{proof}
\noindent The previous theorem and the knowledge of the cohomology of the sheaves $\mathcal{O}_{\proj {n|m}} (\ell)$ we have achieved early on, allow us to compute the cohomology of the tangent space of projective super spaces $\proj{n |m}$. Notice that, surprisingly, some attention must be paid in the case the projective superspace is Calabi-Yau in the sense explained above (i.e. trivial Berezinian sheaf), corresponding to $m = n+1$. \\
We first consider separately the cohomology of the sheaves that appear in the long exact sequence and then we take on some remarkable examples. 
\begin{itemize}
\item $\mathcal{O}_{\proj {n|m}}:$ first of all it is immediate to see from the previous decomposition that one has $h^0 (\mathcal{O}_{\proj {n|m}}) = 1;$ moreover, by means of the previous theorem, one has 
\bear
h^n (\mathcal{O}_{\proj {n|m}})  = \frac{1}{n!} \left [ \frac{d^n}{dx^n} \frac{1}{x+1} \left ( 1 + (x+2)^m \right )  \right ]_{x= 0} 
\eear
Recall that of course one needs $m\geq n+1$, otherwise we found no cohomology in this case.
\item $\mathcal{O}_{\proj {n|m}} (+1)^{n+1|m}:$ we start observing that, disregarding the parity, we find the following decomposition over $\proj n$
\bear
\mathcal{O}_{\proj {n|m}} (+1) \otimes \mathbb{C}^{ n+1|m} = \bigoplus_{k = 0}^m \mathcal{O}_{\proj {n}} (-k +1)^{\oplus (n+m+1){m \choose k}} \nonumber
\eear
This immediately leads to realise that only the cases $k = 0$ and $k = 1$ contribute to $h^0 (\mathcal{O}_{\proj {n|m}} (+1)^{\oplus n+1|m})$, and indeed one finds
\bear
h^0 (\mathcal{O}_{\proj {n|m}} (+1)^{\oplus n+1|m}) = (n+m +1) (n+1) + (n+m +1) (m) = (n+m +1)^2 \nonumber   
\eear
The $n$-cohomology space can be computed by the above theorem, or directly as follows:
\begin{align}
h^n (\mathcal{O}_{\proj {n|m}} (+1)^{\oplus { n+1|m}}) & = \sum_{k = n+2}^m {m \choose k } { {k-2} \choose {k-2 - n} } = \frac{n+m +1}{n!} \sum_{k = 2}^m {m \choose k} \left [ \frac{d^n}{dx^n} \frac{x^k}{x^2}  \right ]_{x=1} \nonumber \\
& = \frac{n+m +1}{n!} \left [ \frac{d^n}{dx^n} \left \{ \frac{(x+2)^m}{(x+1)^2} - \frac{1}{(x+1)^2} - \frac{m}{(x+1)} \right \}\right ]_{x=0} = \nonumber \\
& = \frac{n+m +1}{n!} \left [ \frac{d^n}{dx^n} \left \{ \frac{(x+2)^m}{(x+1)^2} \right \} - (-1)^n (m+n+1)  \right ]_{x=0}  \nonumber
\end{align} 
\end{itemize}
Putting together these results and counting the dimensions, it is easy to see what happens in the case $n \geq 2$: 
\begin{itemize}
\item[{\bf Automorphisms:}] taking into account the even and odd dimensions, we have that of $h^0 (\mathcal{T}_{\proj {n|m}})$ matches the dimension of $\mathfrak{sl} (n+1|m)$, the Lie superalgebra of the Lie supergroup $PGL(n+1|m)$, as somewhat expected by similarity with the ordinary case on $\proj n$. In particular, we have
\bear
h^0 (\mathcal{T}_{\proj {n|m}}) = n^2 + m^2 + 2n | 2nm + 2m \qquad n\geq 2, \; \forall m,  
\eear
that indeed equals $\dim \mathfrak{sl} (n+1|m).$
\item[{\bf Deformations:}] dimensional reasons assures that, in the case $n>2$, the supermanifold $\proj {n|m}$ is rigid for all $m$. Moreover, in the case $n=2$, Theorem \ref{teoCY} guarantees that when $m \neq 3$, we have $h^1 (\mathcal{T}_{\proj {n|m}}) = 0,$ since $\tilde e_2 : H^2 (\mathcal{O}_{\proj {2|m}}) \rightarrow H^2 (\mathcal{O}_{\proj {2|m}}^{\oplus 3|m})$ is injective and therefore $\proj {2|m}$ is rigid also whenever $m\neq 3$. \\
The only case that actually needs to be treated carefully is that of the Calabi-Yau supermanifold $\proj {2|3}$: indeed, in this case Theorem \ref{teoCY} is not helping us, and further, since we are working over the projective plane $\proj 2$ the second cohomology groups could, in principle, be non-zero. We have, thus, the following exact sequence: 
\bear
0\longrightarrow  H^1 (\mathcal{T}_{\proj {2|3}}) \longrightarrow H^2 (\mathcal{O}_{\proj {2|3}}) \longrightarrow H^2 (\mathcal{O}_{\proj {2|3}} (+1)^{\oplus 3|3} ) \longrightarrow H^2 (\mathcal{T}_{\proj {2|3}}) \longrightarrow 0.
\eear  
A direct computation, or the use of the previous formulas, shows that $H^2 (\mathcal{O}_{\proj {2|3}}) \cong \mathbb{C}^{0|1}$ and $H^2 (\mathcal{O}_{\proj {2|3}} (+1)^{\oplus 3|3} ) = 0$, so one has that $h^1 (\mathcal{T}_{\proj {2|3}}) = 0|1$ and therefore $\proj {2|3}$ possess a single \emph{odd} deformation. This is the only projective superspace having a first order deformation whenever $n \geq 2$. We will see that the situation is much different over $\proj 1.$
\end{itemize}
In the least section of the this paper, even if it fits what we have just said above about projective superspaces having bosonic dimension greater than $2$, we will still treat in detail the case of the Calabi-Yau supermanifold $\proj {3|4}$, that have entered many formal constructions in theoretical physics. Now, we will instead focus our attention on the case of supercurves over $\proj 1.$  

\subsection{Supercurves over \proj {1} and the Calabi-Yau supermanifold \proj{1|2}}
We start considering supercurves of the kind $\proj {1|m}$, where $m \neq 2$. In this case, as seen above, the map $\tilde e_1 : H^1 (\mathcal{O}_{\proj {1|m}}) \rightarrow H^1 (\mathcal{O}_{\proj {1|m}}(1)^{\oplus 2|m})$ is actually injective and the long exact sequence in cohomology splits in two short exact sequences. \\
From the previous theorem, or if preferred, by direct computation, we get the following results for the cohomologies involved,
\bear
\left \{ \begin{array}{l}
h^0 (\mathcal{O}_{\proj {1|m}}) = 1\\
h^1 (\mathcal{O}_{\proj {1|m}}) = (m-2) 2^{m-1} + 1 \\
h^0 (\mathcal{O}_{\proj {1|m}}(+1)^{\oplus 2|m}) = (m+2)^2 \\
h^1 (\mathcal{O}_{\proj {1|m}}(+1)^{\oplus 2|m}) = (m+2) [(m+2) + (m - 4) 2^{m-1}].
\end{array}
\right.
\eear 
This is enough to conclude that 
\bear
h^0 (\mathcal{T}_{\proj {1|m}}) = (m+2)^2 -1 = m^2 + 4m + 3.
\eear
This is what we expect, since this number corresponds to the dimension - actually to the sum of the even and odd dimensions - of the super Lie algebra $\mathfrak{sl} (2|m)$, connected to the super group ${PGL}(2|m)$, the \virgolette superisation'' of the general projective group $PGL (2, \mathbb{C})$, the group of automorphisms of $\proj 1.$  \\
As for the first-order deformations (see also \cite{1DCY}), we finds that 
\bear
h^1(\mathcal{T}_{\proj {1|n}}) =  (m+2) \left [ (m+2) + (m-4)2^{m-1} \right ]- (m-2)2^{m-1}-1, 
\eear
Therefore we can observe that we have no (first-order) deformations in the case of $\proj {1|1}$, $\proj{1|2}$, $\proj {1|3}$ and we start having deformations from $\proj {1|4}$, where we find $h^1(\mathcal{T}_{\proj {1|4}}) = 19$. A more careful analysis, aimed to distinguish between even and odd dimensions, yields: $h^1(\mathcal{T}_{\proj {1|4}}) = 11 | 8 $. \\
Before we go on we notice that, of course, $H^2 (\mathcal{T}_{\proj {1|m}}) = 0,$ therefore following the supersymmetric generalisation of a well-known result by Kodaira and Spencer (\cite{AdvMod}, page 21) due to A. Yu. Vaintrob \cite{Vaintrob}, we have that for any $m \geq 4$, the complex supervariety $\proj {1|m}$ has no \emph{obstruction classes} and there exists a Kuranishi family whose base space is a complex \emph{supermanifold} having indeed dimension equal to $h^1 (\mathcal{T}_{\proj {1|m}}).$ It would be certainly interesting to study this families in details to get acquainted with the - still partially mysterious - \emph{odd deformations} appearing in the theory of supermanifolds.  
\vspace{4pt}\\

We are left with the Calabi-Yau supermanifold $\proj {1|2}$: in this case, the map $\tilde e_1 : H^1 (\mathcal{O}_{\proj {1|2}}) \rightarrow H^1 (\mathcal{O}_{\proj {1|2}}(1)^{\oplus 2|2})$ is not injective, and the long exact sequence does not split into two short exact sequence as for $\proj {1|m}$, $m \neq 2$ and something interesting happens. \\
The key is to observe that in the case $m = 2$ we get $h^1 (\mathcal{O}_{\proj {1|2}} (+1)^{\oplus 2|2}) = 0$, so we immediately have that $h^1 (\mathcal{T}_{\proj {1|2}})= 0$, which tells us that $\proj {1|2}$ is rigid. We are left with the following sequence to evaluate: 
\begin{align} 
0 \longrightarrow H^0 (\mathcal{O}_{\proj {1|2}}) {\longrightarrow} H^0(\mathcal{O}_{\proj {1|2}} (+1)^{\oplus 2|2} ) \longrightarrow H^0 (\mathcal{T}_{\proj {1|2}} ) \longrightarrow H^1(\mathcal{O}_{\proj {1|2}}) \longrightarrow 0
\end{align}
Distinguishing between even and odd dimensions we finds the following results: 
\bear
\left \{ \begin{array}{l}
h^0 (\mathcal{O}_{\proj {1|2}}) = 1|0 \\
h^1 (\mathcal{O}_{\proj {1|2}}) = 1|0 \\
h^0 (\mathcal{O}_{\proj {1|2}}(+1)^{\oplus 2|2}) = 8 | 8, 
\end{array}
\right.
\eear
therefore we have that we can find
\begin{align} 
0\to \mathbb{C}^{1|0} {\longrightarrow} \mathbb{C}^{8|8} \longrightarrow H^0 (\mathcal{T}_{\proj {1|2}} ) \longrightarrow \mathbb{C}^{1|0} \to 0
\end{align}
so as for the dimensions we have 
\bear
h^0 (\mathcal{T}_{\proj {1|2}}) =  8|8 + 1|0 - 1|0 = 8|8. 
\eear
This is somehow surprising for this dimension does not correspond to the dimension of the super Lie algebra $\mathfrak{sl} (2|2)$, connected to $PGL (2|2)$: we would indeed find $\dim \mathfrak{sl} (2|2) = 7|8 \neq 8 |8$! \\
The SCY variety $\proj {1|2}$ stands out as the \emph{unique} exception among projective super spaces having $h^{0} (\mathcal{T}_{\proj {n|m}}) \neq \dim \mathfrak{sl} (n|m)$ (see \cite{1DCY}). There is indeed one more \virgolette infinitesimal automorphism'' to account for this correction to $\mathfrak{sl} (2|2)$, this is given by the everywhere defined field $\theta_1 \theta_2 \partial_z  \in H^0 (\mathcal{T}_{\proj {1|2}})$ (here represented in one of the two chart covering $\proj {1|2}$): this is the \emph{only} existing \emph{bosonisation} of the even (local) coordinate $z$.\\
Actually, one might think that following the same line - that is considering bosonisations of the even coordinates - one might discover many more everywhere-defined vector fields enlarging the symmetry transformation of $\proj {n|m}$. It is not like this, as the above computation in cohomology makes manifest. Indeed, such supposedly everywhere defined vector fields are not allowed by the transformation properties of the coordinates $\proj {n|m}$: the correct compensations that makes them into global vector fields happen only in the case on one even and two odd coordinates, corresponding to $\proj {1|2}$. The reader might convince himself by considering the $\theta\theta$-bosonisation in the case of $\proj {1|3}$ or $\proj {2|2}$. Going up in the order of bosonisation only makes the situation worse

We can indeed be more explicit by finding a basis of global sections. The most generic section, (in the local chart of coordinates $(z | \theta_1,\theta_2)$, has the form
\begin{align}
 s(z,\theta_1,\theta_2) =& (a(z)+b_1(z)\theta_1+b_2(z)\theta_2+c(z)\theta_1\theta_2)\partial_z\cr
 &+\sum_{i=1}^2(A^{(i)}(z)+B_1^{(i)}(z)\theta_1+B_2^{(i)}(z)\theta_2+C^{(i)}(z)\theta_1\theta_2)\partial_{\theta_i}.
\end{align}
By passing to the chart $(w | \phi_1,\phi_2)$ one has the transformation
\begin{align}
 z=\frac 1w, \quad\ \theta_i=\frac {\phi_i}w, \qquad i=1,2,
\end{align}
so that the local generators $\{ \partial_z , \partial_{\theta_i}\}$ for $i = 1, 2$ of $\mathcal{T}_{\proj {1|2}}$, transform as
\begin{align}
\partial_z=-(w^2\partial_w+w\phi_1\partial_{\phi_1}+w\phi_2\partial_{\phi_2}), \quad\ \partial_{\theta_i}=w\partial_{\phi_i}, \qquad  i=1,2.
\end{align}
Imposing the absence of singularities when changing local charts, from $(z|\theta_1, \theta_2)$ to $(w|\phi_1 , \phi_2)$ - that is computing $H^0 (\mathcal{T}_{\proj {1|2}})$ - we get the following 
\begin{teo}[Global Sections of $\mathcal{T}_{\proj {1|2}}$] \label{global} A basis of the vector superspace $H^0(\mathcal{T}_{\proj {1|2}})$ is given by the sections
\begin{align}
 & V_1=\partial_z, \qquad\ V_2=z\partial_z, \qquad\ V_3=z^2\partial_z+z\theta_1\partial_{\theta_1}+z\theta_2\partial_{\theta_2}, \qquad\ V_4=\theta_1\theta_2\partial_z, \cr
 & V_5=\theta_1\partial_{\theta_1}, \qquad\ V_6=\theta_2\partial_{\theta_1}, \qquad\ V_7=\theta_1\partial_{\theta_2}, \qquad\ V_8=\theta_2\partial_{\theta_2}, \\
 & \Xi_1=\theta_1 \partial_z, \qquad\ \Xi_2=z\theta_1\partial_z+\theta_1\theta_2\partial_{\theta_2}, \qquad\ \Xi_3=\theta_2 \partial_z, \qquad\ \Xi_4=z\theta_2\partial_z-\theta_1\theta_2\partial_{\theta_1}, \cr
 & \Xi_5=\partial_{\theta_1}, \qquad\ \Xi_6=z\partial_{\theta_1}, \qquad\ \Xi_7=\partial_{\theta_2}, \qquad\ \Xi_8=z\partial_{\theta_2}.
\end{align}
\end{teo}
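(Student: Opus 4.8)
The plan is to determine $H^0(\mathcal{T}_{\proj {1|2}})$ directly, by writing down the most general section on the chart $\mathcal{U}$ with coordinates $(z|\theta_1,\theta_2)$ and imposing that it stay free of singularities after being transported to the second chart $\mathcal{V}$ with coordinates $(w|\phi_1,\phi_2)$. On $\mathcal{U}$ the coefficient functions $a,b_i,c,A^{(i)},B^{(i)}_j,C^{(i)}$ of the general section $s$ displayed above are holomorphic in $z$; since $\mathcal{U}$ and $\mathcal{V}$ cover $\proj {1|2}$, a global section is exactly one whose expression in the $(w|\phi)$-frame --- obtained by substituting $z=1/w$, $\theta_i=\phi_i/w$ together with the given transformation rules for $\partial_z$ and $\partial_{\theta_i}$ --- contains no negative powers of $w$. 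The whole computation thus reduces to a pole-cancellation analysis at $w=0$.

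First I would collect the transported section into the coefficients of $\partial_w$, $\partial_{\phi_1}$, $\partial_{\phi_2}$, and organise each according to its monomial in the $\phi$'s, i.e. by $1$, $\phi_1$, $\phi_2$ and $\phi_1\phi_2$. The coefficient of $\partial_w$ is $-Pw^2$, and requiring that $a(1/w)w^2$, $b_i(1/w)w$ and $c(1/w)$ be regular immediately yields the even-sector bounds $\deg a\le 2$, $\deg b_i\le 1$ and $c$ constant. The coefficients of $\partial_{\phi_i}$ instead receive contributions both from $Q^{(i)}\partial_{\theta_i}$ and from the \emph{odd} piece $w\phi_i\partial_{\phi_i}$ hidden inside the transformation of $\partial_z$; it is this second contribution that couples the even coefficients $a,b_i$ to the odd-sector coefficients $B^{(i)}_j,C^{(i)}$.

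The main obstacle, and the only genuinely delicate point, is the bookkeeping of these coupling constraints. Working monomial by monomial one finds that regularity does not merely bound the degrees of the surviving coefficients but ties some of them together: the quadratic coefficient of $a$ must equal the linear coefficient of $B^{(1)}_1$ and of $B^{(2)}_2$ (this is exactly what forces the compensating terms $z\theta_i\partial_{\theta_i}$ in $V_3$), while the linear coefficients of $b_1,b_2$ must equal $C^{(2)}$ and $-C^{(1)}$ respectively (producing the compensations in $\Xi_2$ and $\Xi_4$). Crucially, the potentially dangerous $1/w$ and $1/w^2$ poles that the terms $c\,\theta_1\theta_2\partial_z$ and $b_i\theta_i\partial_z$ would naively generate in the $\partial_{\phi_j}$-coefficients simply vanish, because $\phi_i^2=0$ and $\phi_1\phi_2\phi_i=0$. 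This nilpotency is what allows the \emph{bosonisation} $V_4=\theta_1\theta_2\partial_z$ to be globally regular, and it is special to the case of exactly two odd coordinates.

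Finally I would solve the resulting linear system. Sorting the remaining free parameters by parity gives an $8$-dimensional even space spanned by $V_1,\dots,V_8$ and an $8$-dimensional odd space spanned by $\Xi_1,\dots,\Xi_8$, each $V_i,\Xi_j$ being the section attached to one free constant with its forced compensating terms appended. Linear independence is immediate, since these sections carry pairwise distinct leading monomials in the $(z|\theta)$-chart. As an independent check, the total count $8|8$ agrees with $h^0(\mathcal{T}_{\proj {1|2}})=8|8$ obtained earlier from the Euler sequence, which also confirms that no global section has been missed.
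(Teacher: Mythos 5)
Your proposal is correct and takes essentially the same route as the paper: the paper's proof likewise consists of writing the most general section in the chart $(z|\theta_1,\theta_2)$, transporting it via $z=1/w$, $\theta_i=\phi_i/w$ with the stated transformation rules for $\partial_z$ and $\partial_{\theta_i}$, and imposing absence of singularities at $w=0$. Your explicit pole-cancellation bookkeeping (the coupling of the quadratic part of $a$ to $B^{(i)}_i$, of the linear parts of $b_1,b_2$ to $C^{(2)},-C^{(1)}$, and the nilpotency $\phi_i^2=0$, $\phi_1\phi_2\phi_i=0$ that renders $\theta_1\theta_2\partial_z$ global) merely spells out the computation the paper leaves implicit, and your resulting count $8|8$ agrees with the cohomological computation from the Euler sequence.
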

\noindent Notice that $h^0 (\mathcal{T}_{\proj 1|2}) = 8|8$, as expected upon using homological methods. In the next section we will start from these sections to study the $\mathcal{N}=2$ super Riemann surfaces structure we can endow $\proj {1|2}$ with.
\subsection{$\proj {1|2}$ as $\mathcal{N} = 2$ Super Riemann Surface}
We now make explicit the $\mathcal{N}=2$ super Riemann surface structure of $\proj {1|2}$. Following \cite{ManinNC}, a supermanifold $\mani$ of dimension $1|2$ can be endowed with a $\mathcal{N} = 2 $ super Riemann surface structure if the super tangent sheaf $\mathcal{T}_{\mani}$ has two $0|1$-dimensional sub-sheaves $\mathcal D_1$ and $\mathcal D_2$, locally generated by vector fields $D_1$, $D_2$ such that they are integrable, i.e. $D^2_i = fD_i$ for some odd function $f$, and $\mathcal  D_1$,  $\mathcal D_2$ and $\mathcal D_1\otimes \mathcal D_2$ generate $\mathcal{T}_\mani$ at \emph{any} point. We will call $\mathcal{D}_1$ and $\mathcal{D}_2$ the \emph{structure distributions} of the $\mathcal{N}=2$ super Riemann surface. \\
In other words, a $\mathcal{N} =2$ super Riemann surfaces can be described by a triple $(\mani, \mathcal{D}_1, \mathcal{D}_2)$ where $\mani$ is an ordinary complex supermanifold of dimension $1|2$ and $\mathcal{D}_1$ and $\mathcal{D}_2$ are the two structure distributions having the properties mentioned above. 
\begin{ttes}[$\mathbb{C}^{1|2}$ as $\mathcal{N} = 2$ super Riemann surface] In order to endow the complex superspace $\mani = \mathbb C^{1|2}$ with a $\mathcal{N} = 2$ super Riemann surface structure one takes the sub-bundles generated, for example, by the global sections
\begin{align}
 D_{0,1}=\partial_{\theta_1}+\theta_2 \partial_z, \qquad  D_{0,2}=\partial_{\theta_2}+\theta_1 \partial_z,
\end{align}
which are integrable (indeed $\{ D_{0,i} , D_{0,i}\} = 0 $ for $i= 1, 2$ ) and have anticommutator given by
\begin{align}
 \{ D_{0,1},D_{0,2} \}=2\partial_z,
\end{align}
so that $D_{0,1}, D_{0,2}, \{D_{0,1},D_{0,2}\}$ generate the whole $\mathcal{T}_{\mathbb{C}^{1|2}}$, that is 
\bear
\mathcal{T}_{\mathbb{C}^{1|2}} \cong \mbox{\emph{Span}}_\mathbb{C} \big \{ D_{0,1}, \, D_{0,2}, \, \{D_{0,1} , D_{0,2} \} \big \}.
\eear 
This is an example of non-compact $\mathcal{N}=2$ super Riemann surface.
\end{ttes}
\noindent Now a remark is in order: the \virgolette defining sections'' $\{D_{0,1}, \, D_{0,2}, \{D_{0,1}, D_{0,2} \}  \}$ for the $\mathcal{N}=2$ super Riemann surface structure of $\mathbb{C}^{1|2}$, remain global sections of tangent sheaf even for the supermanifold $\mani=\proj {1|2}$, since, looking at the previous theorem \eqref{global} one has  
\begin{align}
D_{0,1}=\Xi_3+\Xi_5, \qquad\ D_{0,2}=\Xi_1+\Xi_7, \qquad\ \partial_z=V_1.
\end{align}
The big difference resides in that \emph{such sections are not sufficient to generate the whole $\mathcal{T}_{\proj {1|2}}$}, since $\partial_z$ has a double zero in $w=0$! Indeed, since the Euler characteristic of $\proj {1}$ is $2$, any even section has indeed two zeros and the invertible sheaves we are looking for cannot be everywhere generated by a single section. We thus need to look for more general odd integrable sections. \\
The most general form that a odd global section can take is
\begin{align}
 D_{odd}=\sum_{i=1}^8 \alpha^i \Xi_i,
\end{align}
where $\Xi$'s that appeared in \eqref{global} are such that $\mbox{Span}_{\mathbb{C}} \{\Xi_1, \ldots, \Xi_8 \} \cong \left (H^0 (\mathcal{T}_{\proj {1|3}}) \right )_1$ and where $\alpha^i$ for $i = 1, \ldots, 8$ are complex constants. After imposing the integrability condition in the form $D_{odd}^2 = 0$, we get that one must have the following conditions satisfied
\begin{align}
\left \{ 
\begin{array}{l} 
\alpha_1\alpha_5+\alpha_7\alpha_3 =0,\\
\alpha_2\alpha_6+\alpha_8\alpha_4 =0,\\
\alpha_1\alpha_6+\alpha_2\alpha_5 +\alpha_3\alpha_8+\alpha_4\alpha_7 =0. 
\end{array}
\right.
\end{align}
Solving, we find the sections
\begin{align} \label{lineb1}
 D_1=&\alpha_1 (\Xi_3+\Xi_5)+\alpha_2(\Xi_4+\Xi_6), \\
 D_2=&\beta_1 (\Xi_1+\Xi_7)+\beta_2(\Xi_2+\Xi_8),  \label{lineb2}
\end{align}
again for $\alpha_1, \alpha_2, \beta_1, \beta_2 \in \mathbb{C}$. The anticommutator reads
\begin{align} \nonumber 
 \{D_1,D_2\}=2[\alpha_1\beta_1\partial_z+\alpha_2\beta_1(\theta_1\partial_{\theta_1}+z\partial_z)+\alpha_1\beta_2(\theta_2\partial_{\theta_2}+z\partial_z)+\beta_1\beta_2(z^2\partial_z+z\theta_1\partial_{\theta_1}
 +z\theta_2\partial_{\theta_2})].
\end{align}
Notice that the sections $D_1$ and $D_2$ of equations \eqref{lineb1}, \eqref{lineb2} can be re-written in the more meaningful form
\begin{align}
D_1=&[\alpha_1+\alpha_2(z-\theta_1\theta_2)](\Xi_3+\Xi_5),\\
D_2=&[\beta_1+\beta_2(z+\theta_1\theta_2)](\Xi_1+\Xi_7).
\end{align}
These expressions make apparent that $D_1$ and $D_2$ generate two invertible sheaves of rank $0|1$ - we call them $\mathcal{D}_1$ and $\mathcal{D}_2$ respectively as above - by varying the coefficients (as the zeros are moved everywhere), that is we have 
\bear
\mathcal{D}_1 \cong \mbox{Span}_{\mathbb{C}} D_1, \qquad \mathcal{D}_2 \cong \mbox{Span}_{\mathbb{C}} D_2.
\eear 
Also, we see that now $\mathcal  D_1$,  $\mathcal D_2$ and $\mathcal D_1\otimes \mathcal D_2$ generate
the whole $\mathcal{T}_{\proj {1|2}}$, since the triple $\{D_1, D_2, \{D_1, D_2\} \}$ does.

\

We can also investigate the \emph{automorphisms} of the $\mathcal{N}=2$ super Riemann surface structure. Indeed, the automorphisms of $\proj {1|2}$ are generated by the set of all global sections of $\mathcal T_{\proj {1|2}}$ determined above.
We have to select the sub-algebra of global sections acting internally on the invertible sheaves $\mathcal D_1$, $\mathcal D_2$ and $\mathcal D_1\otimes \mathcal D_2$. By a direct inspection we see that the automorphisms of the $\mathcal{N}=2$ super structure are generated by a $4|4$-dimensional linear superspace with basis given by $\{ U_1, \ldots U_4, \Sigma_1 , \ldots, \Sigma_4\}$, where
\begin{align}
 & U_1 \defeq V_1, \qquad  U_2 \defeq V_2+V_5, \qquad U_3 \defeq V_3, \qquad U_4\defeq V_2+V_8, \nonumber \\
 & \Sigma_1\defeq\Xi_1+\Xi_7, \qquad \Sigma_2\defeq \Xi_2+\Xi_8, \qquad \Sigma_3\defeq\Xi_3+\Xi_5, \qquad \Sigma_4\defeq\Xi_4+\Xi_6. 
\end{align}
These generators satisfy the super commutation relations
\begin{align*}
 & [U_1,U_2]=U_1, \quad [U_1,U_3]=U_2+U_4, \quad [U_1,U_4]=U_1, \\
 & [U_2,U_3]= U_3, \quad  [U_2,U_4]=0, \quad [U_3,U_4]=-U_3;\\ 
 \\
 & \{\Sigma_1,\Sigma_2\}=0, \quad  \{\Sigma_1,\Sigma_3\}=2 U_1, \quad \{\Sigma_1,\Sigma_4\}=2U_2, \\
 & \{\Sigma_2,\Sigma_3\}=2U_4, \quad \{\Sigma_2,\Sigma_4\}=2U_3, \quad \{\Sigma_3,\Sigma_4\}=0;\\
 \\
 & [U_1, \Sigma_1]=0, \quad\ [U_1,\Sigma_2]=\Sigma_1, \quad\ [U_1,\Sigma_3]=0, \quad\ [U_1,\Sigma_4]=\Sigma_3, \\
 & [U_2,\Sigma_1]=0, \quad\ [U_2,\Sigma_2]=\Sigma_2, \quad\ [U_2,\Sigma_3]=-\Sigma_3, \quad\ [U_2,\Sigma_4]=0, \\
 & [U_3,\Sigma_1]=-\Sigma_2, \quad\ [U_3,\Sigma_2]=0, \quad\ [U_3,\Sigma_3]=-\Sigma_4, \quad\ [U_3,\Sigma_4]=0, \\
 & [U_4,\Sigma_1]=-\Sigma_1, \quad\ [U_4,\Sigma_2]=0, \quad\ [U_4,\Sigma_3]=0, \quad\ [U_4,\Sigma_4]=\Sigma_4.
\end{align*}
Something better can be done in order to write the resulting superalgebra in a more meaningful and, in particular, physically relevant form. 
We define 
\begin{align}
& H \defeq U_1, \qquad \quad K \defeq  U_3, \qquad \quad D \defeq \frac{1}{2} \left (U_2 + U_4 \right ),  \qquad \quad Y \defeq \frac{1}{2}\left (U_2 - U_4 \right ), \nonumber \\
& Q_1 \defeq \frac{1}{\sqrt{2}}\left ( \Sigma_1 - i \Sigma_3 \right ), \quad Q_2 \defeq \frac{1}{\sqrt{2}} \left ( \Sigma_3 - i \Sigma_1 \right ), \quad S_1 = - \frac{1}{\sqrt{2}} \left ( \Sigma_2 - i \Sigma_ 4 \right ), \quad S_2 \defeq - \frac{1}{\sqrt{2}} \left ( \Sigma_4 - i \Sigma_2 \right ).
\end{align} 
For completeness, we write these elements in terms of the (local) basis of the tangent space:
\begin{align}
& \mbox{{\bf Bosonic generators:} \; \; \; \;} 
 \left \{ \begin{array}{l}
H \defeq \partial_{z},\\
K \defeq   z^2\partial_z+z\theta_1\partial_{\theta_1}, \\
D \defeq z \partial_z + \frac{1}{2}(\theta_1 \partial_{\theta_1} + \theta_2 \partial_{\theta_2}), \\
Y \defeq \frac{1}{2}\left ( \theta_1 \partial_{\theta_1} -  \theta_2 \partial_{\theta_2} \right ); 
 \end{array}
 \right.
 \nonumber \\
& \mbox{{\bf Fermionic generators:} \; \; }  
 \left \{ \begin{array}{l}
Q_1 \defeq \frac{1}{2}\left ( \theta_1 \partial_{z} + \partial_{\theta_2} - i (\partial_{\theta_1} + \theta_2 \partial_z )\right ), \\
Q_2 \defeq \frac{1}{2}\left ( \partial_{\theta_1} + \theta_2 \partial_z  - i (\theta_1 \partial_{z} + \partial_{\theta_2} ) \right ), \\ 
S_1 \defeq \frac{1}{2} \left ( (- z \theta_1 + i z \theta_2 ) \partial_z + i(z  - \theta_1 \theta_2) \partial_{\theta_1} + (- \theta_1 \theta_2 -  z) \partial_{\theta_2}  \right ), \\
S_2 \defeq \frac{1}{2} \left ( (- z\theta_2 + i z\theta_1)\partial_z + (- z + \theta_1 \theta_2 )\partial_{\theta_1} + i ( z + \theta_1\theta_2 ) \partial_{\theta_2}\right ). 
\end{array}
\right.
\end{align} 
These definitions allows to prove, by simply computing the supercommutators, the following
\begin{teo}[$\mathcal{N} = 2$ SUSY Algebra] Let $(\proj {1|2}, \mathcal{D}_1, \mathcal{D}_2 )$ be the $\mathcal{N} = 2$ super Riemann surfaces constructed from $\proj {1|2}$. Then the algebra of the $\mathcal{N}=2$ SUSY-preserving infinitesimal automorphisms is generated by $ \Big \{H, K, D, Y \, \big | \, Q_1, Q_2, S_1, S_2 \Big \}$ and it corresponds to the Lie superalgebra $\mathfrak{osp} (2|2)$ of the orthosymplectic Lie supergroup $OSp(2|2) $, as it satisfies the following structure equations: 
\begin{align}
& \{ Q_i, Q_j \} = - 2 i \delta_{ij} H, \qquad \{S_i, S_j \} = - 2 i \delta_{ij } K, \qquad \{ Q_i , S_j \} = + 2 i \delta_{ij} D -  2 \epsilon_{ij} Y, \nonumber \\
& [H, Q_i ] =  0, \qquad [H, S_i ] = - Q_i, \qquad [H, Q_i]  = S_i, \qquad [H, S_i] = 0, \nonumber \\
& [D, Q_i ] = - \frac{1}{2} Q_i, \qquad [D, S_i] =  \frac{1}{2} S_i, \qquad [Y, Q_i] =  \frac{1}{2}\epsilon_{ij}Q_j, \qquad [Y, S_i] =  \frac{1}{2}\epsilon_{ij} S_j, \nonumber \\
& [Y, H] =0 , \qquad  \quad [Y, D] = 0, \qquad  \quad [Y, K] = 0,
\end{align}
together with the structure equations of the closed (bosonic) sub-algebra $\mathfrak{o} (2,1)$:
\begin{align}
[H, D] = H, \qquad [H, K] = 2D, \qquad [D, K] = K.
\end{align}
\end{teo}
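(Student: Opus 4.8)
The plan is to read the statement as a two-part claim: first, that the eight generators $\{H,K,D,Y\,|\,Q_1,Q_2,S_1,S_2\}$ close under the super-bracket with exactly the displayed structure constants, and second, that the resulting $4|4$-dimensional Lie superalgebra is abstractly the orthosymplectic superalgebra $\mathfrak{osp}(2|2)$. Since the preceding discussion has already isolated, by direct inspection, these eight sections as the ones preserving the structure distributions $\mathcal{D}_1$, $\mathcal{D}_2$ and $\mathcal{D}_1\otimes\mathcal{D}_2$, I would take the generating set as given and concentrate entirely on the bracket relations and the final identification.

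For the structure equations, rather than expand every supercommutator directly from the local expressions in the basis $\{\partial_z,\partial_{\theta_1},\partial_{\theta_2},\dots\}$, I would exploit the commutation table of the auxiliary generators $U_1,\dots,U_4$ and $\Sigma_1,\dots,\Sigma_4$ recorded immediately above the statement. Because $H,K,D,Y$ and each $Q_i,S_i$ are defined as explicit complex linear combinations of the $U_a$ and $\Sigma_b$, every bracket among the new generators follows by bilinearity, with the appropriate Koszul signs, from those already-known structure constants, so the computation collapses to linear algebra. I would organise it by parity into three blocks. The odd--odd anticommutators $\{Q_i,Q_j\}$, $\{S_i,S_j\}$, $\{Q_i,S_j\}$ come from the $\{\Sigma_a,\Sigma_b\}$ relations, where one uses crucially that the diagonal terms $\{\Sigma_a,\Sigma_a\}$ vanish --- this is precisely the integrability $D^2=0$ established earlier for the sections generating $\mathcal{D}_1,\mathcal{D}_2$. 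The mixed even--odd brackets $[H,Q_i],[K,Q_i],[D,Q_i],[Y,Q_i]$ and their $S$-analogues come from the $[U_a,\Sigma_b]$ table, and the even--even relations, including the centrality of $Y$, come from the $[U_a,U_b]$ table. In each block one substitutes the definitions and checks that the factors of $\tfrac{1}{\sqrt{2}}$, the $i$'s, and the signs assemble into the claimed $\delta_{ij}$ and $\epsilon_{ij}$ coefficients.

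For the identification with $\mathfrak{osp}(2|2)$, I would read the abstract structure off the verified relations. The even generators $H,D,K$ close into the three-dimensional simple algebra $[H,D]=H$, $[H,K]=2D$, $[D,K]=K$, that is $\mathfrak{o}(2,1)\cong\mathfrak{sp}(2)$, while $Y$ commutes with all of $H,D,K$ and hence spans a central $\mathfrak{o}(2)$ summand; thus the even part is exactly $\mathfrak{sp}(2)\oplus\mathfrak{o}(2)$, the even part of $\mathfrak{osp}(2|2)$. The four odd generators form the bifundamental $\mathbb{C}^2\otimes\mathbb{C}^2$: the index $i$ carries the $\mathfrak{o}(2)$ rotation encoded by $[Y,Q_i]=\tfrac{1}{2}\epsilon_{ij}Q_j$, while $(Q_i,S_i)$ is an $\mathfrak{sp}(2)$ doublet graded by $D$ with weights $\mp\tfrac{1}{2}$. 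The anticommutators then split into the symmetric-in-$ij$ part valued in $\mathfrak{sp}(2)$ (the terms $-2i\delta_{ij}H$, $-2i\delta_{ij}K$, $+2i\delta_{ij}D$) and the antisymmetric part valued in $\mathfrak{o}(2)$ (the term $-2\epsilon_{ij}Y$), which is exactly the defining pattern of $\mathfrak{osp}(2|2)$. The dimension count $4|4$ confirms that there is no extra summand, completing the identification.

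The main obstacle is bookkeeping rather than conceptual: keeping the Koszul sign conventions consistent across the odd--odd and even--odd brackets, tracking the minus signs already introduced by super-transposition in the local expressions, and making sure the complex rotations defining $Q_i,S_i$ reproduce precisely the $\delta_{ij}$ versus $\epsilon_{ij}$ dichotomy. A convenient sanity check at each stage is that the bosonic $\mathfrak{o}(2,1)$ relations must emerge independently of the fermionic data, and that every odd--odd anticommutator must land in the even part with no leftover nilpotent pieces --- any surviving $\theta_1\theta_2$ term would signal a sign error.
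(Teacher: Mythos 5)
Your proposal is correct and takes essentially the same route as the paper, whose proof consists precisely of the remark that the theorem follows \emph{by simply computing the supercommutators}: one expands $H,K,D,Y,Q_i,S_i$ bilinearly in the $U_a,\Sigma_b$ basis using the tables recorded just above the statement, with the diagonal vanishings $\{\Sigma_a,\Sigma_a\}=0$ guaranteed by the integrability conditions imposed on the $D_1,D_2$ families, exactly as you say. Your final paragraph, identifying the even part as $\mathfrak{sp}(2)\oplus\mathfrak{o}(2)$ with the odd generators in the bifundamental and matching the $\delta_{ij}$/$\epsilon_{ij}$ split of the anticommutators to the defining pattern of $\mathfrak{osp}(2|2)$, spells out an identification the paper merely asserts, but this is an elaboration rather than a different method.
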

\noindent We stress that, as the reader with some expertise in supersymmetric QFT's might have easily noticed, the form above has the merit to make manifest all the physically relevant elements of the superalgebra, such as the translations, rotations, supersymmetries, dilatations and so on. This shows a direct connection with physical theories, which is sometimes left hidden in the more mathematical oriented literature.  \vspace{.5cm}

It is anyway fair to stress that some attention need to to be paid here. Indeed, even if $\mathfrak{osp} (2|2)$ is actually the Lie superalgebra of automorphisms of $\proj {1|2}$ as $\mathcal{N}=2$ super Riemann surface, the related supergroup $OSp(2|2)$, defined as
\bear
OSp(2|2) \defeq \left \{ A \in GL(2|2): \; A^{st}I_{2|2}A = I_{2|2} \right \} \quad \mbox{where}\quad I_{2|2} \defeq \left ( \begin{array}{cc|cc} 
0 & - 1 & 0 & 0 \\
-1 & 0 & 0 & 0 \\
\hline 
0 & 0 & 0 & 1 \\
0 & 0 & -1 & 0  
\end{array}
\right ),
\eear 
is \emph{not} the supergroup of automorphisms of $\proj {1|2}$ as $\mathcal{N}=2$ super Riemann surface. Instead, it turns out (see for example \cite{ManinNC}) that the supergroup of automorphisms of $\proj {1|2}$ as a $\mathcal{N} =2 $ super Riemann surfaces - call it $\proj {1|2}_{\mathcal{N }=2}$ is obtained as a suitable quotient of $OSp(2|2)$, indeed we have
\bear
\xymatrix{ 
1 \ar[rr] && \mathbb{Z}_2 \ar[rr] && OSp (2|2) \ar[rr] && Aut\, (\proj{1|2}_{\mathcal{N}=2}) \ar[rr] && 1.
} 
\eear 
where $\mathbb{Z}_2 = \{ \pm 1 \}$. One can see that $Aut (\proj{1|2}_{\mathcal{N}=2})$ has two connected components as $OSp (2|2)$: an automorphism that do not belong to the identity component interchanges the two structure distributions, $\mathcal{D}_1 \leftrightarrow \mathcal{D}_2$.  
 
\subsection{$\mathcal{N}=2$ Semi-Rigid Super Riemann Surfaces and Genus $0$ Topological String} 
 
In the previous section we have shown how to endow $\proj{1|2}$ with the structure of an $\mathcal{N}=2$ super Riemann surface and we have studied its geometry to some extent. Actually, this particular supermanifold plays a fundamental role in a certain formulation of \emph{topological string theory}.

Roughly speaking, a topological string theory is obtained by coupling a topological field theory - that is a quantum field theory whose correlation functions can be exactly solved -, with a suitable worldsheet gravity (called \emph{$D=2$ topological gravity}). It has been observed long ago in \cite{DistlerNelson} that the worldsheet of a topological string theory can be constructed geometrically in a very natural way as a \emph{$\mathcal{N}= 2$ semi-rigid super Riemann surface}: these supermanifolds are $\mathcal{N}=2$ super Riemann surface whose fermionic sheaf $\mathcal{F}_{\mani_{\mathcal{N}=2}} = \mathcal{D}_1 \oplus \mathcal{D}_2$ undergoes a certain \emph{twist}.\\
Even if there exists some recent physical literature on the subject (see for example \cite{BeiJia}), only the big-picture is addressed and no specific example is actually carried out in detail. We leave to a forthcoming paper the actual construction of genus 0 topological string worldsheet and the related \emph{$A$-model} and \emph{$B$-model}. Here we limit ourselves to construct the only genus zero $\mathcal{N}=2$ semi-rigid super Riemann surface (indeed the related supermoduli stack has dimension $0|0$ if $g=0$, it is a superpoint). The general recipe to obtain a semi-rigid super Riemann surface from an $\mathcal{N}=2$ super Riemann surface is to twist its distinguished sheaves, call it $\mathcal{D}_+$ and $\mathcal{D}_-$ as to get an odd trivial sheaf (together with an odd non-zero trivial section) and an odd canonical sheaf. In the case of $\proj{1|2}_{\mathcal{N}=2}$ the twist leads then to $\Pi \mathcal{O}_{\proj 1}$ and $\Pi \mathcal{O}_{\proj 1} (-2),$ and there are two such possible twists:  
\begin{align}
\mbox{{\bf Topological Twists:}} \qquad \left \{ \begin{array}{l}  \mathcal{TW}_+ : \mathcal{D}_+ \oplus \mathcal{D}_- \longrightarrow \Pi \mathcal{O}_{\proj 1} \oplus \Pi \mathcal{O}_{\proj 1} (-2),\\ \\
 \mathcal{TW}_- : \mathcal{D}_+ \oplus \mathcal{D}_- \longrightarrow \Pi \mathcal{O}_{\proj 1}(-2) \oplus \Pi \mathcal{O}_{\proj 1},
\end{array}
\right.
\end{align}
We call the related semi-rigid super Riemann surfaces $\proj {1|2}_{\mathcal{N}=2, +}$ and $\proj {1|2}_{\mathcal{N}=2, -}$: clearly one has $\proj {1|2}_{\mathcal{N}=2, +} \cong \proj {1|2}_{\mathcal{N}=2, -}.$ These two supermanifolds will be the basic ingredients that enter the construction of genus 0 topological string worldsheet and different choices of twists for certain pairs of $\mathcal{N}=2$ semi-rigid super Riemann surface yields $A$-model or $B$-model, but we will need to deal with some subtleties concerning embedding of supermanifolds.

\subsection{The Calabi-Yau Supermanifold \proj{3|4}}

We now briefly discuss the Calabi-Yau supermanifold $\proj {3|4}$. This is definitely one of the most celebrated supermanifold in theoretical physics, where it is sometimes called \emph{supertwistor space}. The relevance of $\proj {3|4}$ is mainly due to one \cite{Witten}. In this famous paper, Edward Witten showed that there exists an \virgolette equivalence'' between the perturbative expansion of $\mathcal{N} = 4$ super Yang-Mills theory and the $D$-instantons expansion of a particular string theory. This string theory is actually a \emph{topological B model} having as target space the Calabi-Yau supermanifold $\mathbb{P}^{3|4}.$  \\ 
Even if $\proj{3|4}$ enters this striking and unexpected duality, it is fair to say that on the supergeometrical ground it is a pretty rough supermanifold, as we shall see shorly.\\
As discussed early on in this paper, its automorphisms supergroup is given - as expected - by $PGL (4|4)$, of dimension $31|32$. This matches the calculation at the level of the algebra of the automorphisms that can be done using the Euler exact sequence, 
\bear
\xymatrix{ 
0 \ar[rr] && \mathcal{O}_{\proj {3|4}} \ar[rr] && \mathcal{O}_{\proj {3|4}}(+1)^{\oplus 4|4} \ar[rr] && \mathcal{T}_{\proj {3|4}} \ar[rr] && 0.
} 
\eear 
The induced long cohomology exact sequence begins with
\bear
\xymatrix{ 
0 \ar[r] & H^0(\mathcal{O}_{\proj {3|4}}) \ar[r] & H^0(\mathcal{O}_{\proj {3|4}}(+1))^{\oplus 4|4} \ar[r] & H^0(\mathcal{T}_{\proj {3|4}}) \ar[r] & 0,
} 
\eear 
then using the above formulas or with a direct calculation one easily finds that, on the one hand $H^0 (\mathcal{O}_{\proj {3|4}}) = \mathbb{C}^{1|0}$, while on the other hand
\begin{align}
H^0 (\mathcal{O}_{\proj {3|4}}(+1)^{\oplus 4|4}) \cong H^0 (\mathcal{O}_{\proj 3} (+1)^{\oplus 4} \oplus \mathcal{O}_{\proj 3}^{\oplus 16}) \oplus \Pi \,H^0 (\mathcal{O}_{\proj 3} (+1)^{\oplus 4} \oplus \mathcal{O}_{\proj 3}^{\oplus 16}) \cong \mathbb{C}^{32|32},
\end{align}
so that taking the quotient one has $H^0 (\mathcal{T}_{\proj {3|4}}) = \mathbb{C}^{31|32}$.

Moreover, going up in the long cohomology exact sequence and looking again at the sheaves on $\proj {3|4}$ as sheaves on $\proj {3}$ by means of the projection $\pi : \proj {3|4} \rightarrow \proj 3$, one sees that $H^1 (\mathcal{T}_{\proj {3|4}}) = 0$. This is so since one has $H^i (\mathcal{O}_{\proj 3} (k) ) = 0$ for $i=1, 2$ for any $k\in \mathbb{Z}$ and therefore $H^1 (\mathcal{T}_{\proj {2|3}})$ sits between two zeroes. This tells that the Calabi-Yau supermanifolds $\proj {3|4}$ is actually \emph{rigid}, it does not allow (first order) deformations (see \cite{Vaintrob}). Such a geometric rigidity result, in turn, calls for a better understanding of the \emph{super mirror map} proposed by Aganagic and Vafa in \cite{AgaVafa}: this involves in particular $\proj{3|4}$, whose supposed mirror supermanifold would be (in a suitable limit) a certain quadric in $\proj {3|3} \times \proj {3|3}$. The whole construction is carried out by mean of a path-integral computation and it is hard to retrive the underlying geometry and the actual mirror map. For example, if in the ordinary geometric formulation of mirror symmetry the K\"ahler moduli and the complex moduli of two mirror manifolds get interchanged, in a supergeometric setting it is not even clear how to define a K\"ahler structure - and thus a K\"alher moduli space -, so that it is hard to say in what sense the map given in \cite{AgaVafa} should be intended as a mirror map, rather than something else.

\appendix

\section{Even Exponential Map}


\noindent In this appendix we prove the actual exactness of \emph{even} exponential exact sequence, that we use throughout this paper to study the invertible sheaves of rank $1|0$ on a supermanifold,  
\bear
\xymatrix{
0 \ar[rr] && \mathbb{Z}_{\mani}  \ar[rr] &&  \mathcal{O}_{\mani, 0}  \ar[rr]^{\exp} &&  \mathcal{O}_{\mani, 0}^\ast \ar[rr] & & 0,
}  
\eear
where $\mathcal{O}_{\mani, 0} $ is the even part of the structure sheaf of $\mani$ and $\mathcal{O}_{\mani, 0}^\ast$ is the even part of the sheaf of invertible elements in $\mathcal{O}_{\mani}$, i.e. whose sections have a non-zero reduced part. Notice that they are sheaves of \emph{abelian} groups (respectively, additive and multiplicative).\\
Actually, the only thing that we need to check is the surjectivity of the exponential map. Given a supermanifold $\mani$, we therefore consider
\begin{align}
\mathcal{U} \longmapsto \exp_{\mathcal{U}} \; : \mathcal{O}_{\mani,0}(\mathcal{U}) & \longrightarrow \mathcal{O}_{\mani, 0}^{\ast}(\mathcal{U}) \nonumber \\
s_{0} & \longmapsto \exp_{\mathcal{U}}(s_0) \defeq e^{2\pi i s_0}.
\end{align}
where $\mathcal{O}_{\mani,0}$ and $\mathcal{O}^{\ast}_{\mani,0}$ are the \emph{even} part of the corresponding $\mathbb{Z}_2$-graded sheaf. We have the following
\begin{teo} The map $\exp$ defined above is surjective and $\ker (\exp) = \mathbb{Z}_\mani$. 
\end{teo}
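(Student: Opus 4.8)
The plan is to use, as the authors note, that exactness of the sequence amounts to surjectivity of $\exp$ together with the computation $\ker(\exp) = \set{Z}_\mani$; I treat the kernel first, as it is the shorter half. Every even section $s_0$ splits as $s_0 = \bar s_0 + \nu$, where $\bar s_0 = i^\sharp(s_0)$ is its holomorphic reduced part and $\nu \in \mathcal{J}_\mani$ is even and nilpotent. Because even sections commute, $\exp(s_0) = e^{2\pi i \bar s_0}\,e^{2\pi i \nu}$, where the second factor is the \emph{terminating} (hence globally defined) sum $1 + 2\pi i \nu + \tfrac{1}{2}(2\pi i \nu)^2 + \cdots \in 1 + \mathcal{J}_\mani$. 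Applying $i^\sharp$ to the equation $\exp(s_0) = 1$ gives $e^{2\pi i \bar s_0} = 1$ on $\manir$, so by the classical exponential sequence $\bar s_0$ is a locally constant integer; substituting back leaves $e^{2\pi i \nu} = 1$, i.e. $2\pi i\nu\,(1 + \pi i\nu + \cdots) = 0$ with the parenthesised factor a unit, forcing $\nu = 0$. Hence $\ker(\exp) = \set{Z}_\mani$, and the reverse inclusion is immediate.

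For surjectivity it suffices, this being a morphism of sheaves, to lift sections locally. Let $g \in \mathcal{O}^\ast_{\mani,0}(\mathcal{U})$; by the invertibility hypothesis its reduced part $\bar g = i^\sharp(g)$ is a nowhere-vanishing holomorphic function, so after shrinking $\mathcal{U}$ the classical exponential sequence on $\manir$ furnishes a holomorphic $h_0$ with $\bar g = e^{2\pi i h_0}$. Since $\mani$ is locally isomorphic to a superdomain, $h_0$ sits inside $\mathcal{O}_{\mani,0}(\mathcal{U})$ as the degree-zero part of the local Grassmann algebra. Setting $N \defeq e^{-2\pi i h_0} g - 1$, one checks $i^\sharp(N) = \bar g^{-1}\bar g - 1 = 0$, so $N \in \mathcal{J}_\mani$ is even and nilpotent; the truncating series $L \defeq N - \tfrac{1}{2}N^2 + \tfrac{1}{3}N^3 - \cdots$ then defines an even nilpotent section with $e^L = 1 + N$. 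Taking $s_0 \defeq h_0 + \tfrac{1}{2\pi i}L$ yields an even section with $\exp(s_0) = e^{2\pi i h_0}e^{L} = g$, once more using commutativity of even sections to split the exponential.

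The only genuinely supergeometric content is the passage through the nilpotent ideal $\mathcal{J}_\mani$, and it is precisely here that I expect the care to lie: the identities $e^{A+B}=e^Ae^B$ and $e^{\log(1+N)}=1+N$ must be read in the commutative ring $\mathcal{O}_{\mani,0}(\mathcal{U})$, and the logarithmic and exponential series must be recognised as \emph{finite} sums by nilpotency, so that no convergence issue arises. The steps needing verification are thus bookkeeping rather than analysis — that $N$ indeed lands in $\mathcal{J}_\mani$ (so $L$ is nilpotent and the series truncate) and that $1+N$ is a unit — both of which follow from $g$ being an even invertible section with reduced part $\bar g$. Beyond this I anticipate no obstruction.
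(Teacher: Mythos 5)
Your proposal is correct and takes essentially the same route as the paper: locally split an even section into its reduced part plus an even nilpotent, treat the reduced part via the classical exponential sequence on $\manir$, and treat the nilpotent part via the terminating $\log$/$\exp$ series; your kernel endgame, factoring $e^{2\pi i \nu}-1 = 2\pi i \nu \cdot u$ with $u$ a unit because $u \in 1 + \mathcal{J}_\mani$, is in fact marginally cleaner than the paper's, which instead multiplies $\sum_{k\geq 1} N^k/k!$ by $N^{m-2}$ to contradict the nilpotency index. The only caution is that your opening decomposition $s_0 = \bar s_0 + \nu$ with $\bar s_0 = i^\sharp(s_0)$ must be read in a local chart (as you explicitly do in the surjectivity half), since on a non-projected $\mani$ there is no canonical global lift of $i^\sharp(s_0)$ back into $\mathcal{O}_{\mani,0}$ --- harmless here, as exactness and the identification $\ker(\exp) = \mathbb{Z}_\mani$ are local statements, and this is exactly the level of locality at which the paper itself works.
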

\begin{proof} Surjectivity is to be proved locally, on the stalks. Choosing an open set $\mathcal{U} \owns x$, we can take a representative of an element in $\mathcal{O}^\ast_{\mani,0,\, x}$ such that the corresponding element in $\mathcal{O}^{*}_{\mani, 0}(U)$ has the following expansion
\bear
f_0 (x, \theta ) =  f_{\emptyset} (x) + N (x ,\theta), \qquad f_{\emptyset } (x) \neq 0.
\eear
Notice that, for the sake of convenience, we have split the contribution on the reduced manifold, $f_{\emptyset} (z) $ - which is an ordinary non-zero holomorphic function since we are considering an invertible element in $\mathcal{O}^{\ast}_{\mani,0}(\mathcal{U})$ - and we have gathered all the nihilpotent contributions in the expansion in the term $N(x,\theta) \in \mathcal{J}_{\mani}(\mathcal{U})$, such that $N^m (x, \theta) = 0$ and $N^{m-1} (x , \theta) \neq 0$ for some $m \geq 2,$ nihilpotency index. \\
Now, since $f_{\emptyset} (x) \neq 0$, if one wish, it can be collected to give
\bear
f_0 (x, \theta ) = f_{\emptyset}(x) \left ( 1+ \frac{N(x, \theta)}{f_{\emptyset} (x)} \right ).
\eear
This might be useful in writing the logarithm, defined as to be the (local) inverse of the exponential, that is $\mathcal{U} \mapsto \log_{\mathcal{U}} $ with $\log_\mathcal{U} (s_0) = \frac{1}{2\pi i} \log (s_0)$ for $s_0 \in \mathcal{O}_{\mani,0}^{\ast}$. In this way, using the expression above, one finds:  
\begin{align}
\log_{\mathcal{U}} ( f_0 ) & = \frac{1}{2\pi i}\log \left ( f_{\emptyset}(x) \right ) + \frac{1}{2\pi i}\log \left (1 +  \frac{N(x, \theta)}{f_{\emptyset} (x)} \right ) \\
& = \frac{1}{2\pi i}\log  \left ( f_{\emptyset}(x) \right ) + \frac{1}{2\pi i}\sum_{k=0}^{m-2} \frac{(-1)^k}{k+1} \left ( \frac{N(x,\theta)}{f_{\emptyset} (x)} \right )^{k+1} 
\end{align}
This is well-defined for $\log  \left ( f_{\emptyset}(x) \right )$ is the logarithm of an ordinary holomorphic non-zero function and it is locally single-valued and the remaining part is a finite sum of nihilpotents. Therefore one has that over a generic open set $\mathcal{U} \subset |\mani|$ containing $x$, $f_0 = \exp_\mathcal{U} (\log_\mathcal{U} (f_0)$, that is $\exp$ is surjective.
We can now evaluate the exponential of the above quantity to establish the kernel of the map:  
\begin{align}
\exp_\mathcal{U} (f_0) & = e^{2 \pi i \left ( f_{\emptyset} (x) + N (x ,\theta)\right )} = e^{2 \pi i f_{\emptyset} (x)} e^{2 \pi i N(x , \theta)} = \\
& = 2\pi i \, e^{2 \pi i f_{\emptyset} (x)} \left ( 1 + \sum_{k =1}^{m-1} \frac{N(x, \theta)^k}{k!} \right ) \stackrel{!}{=} 1_\mathcal{U}
\end{align}
Now the exponential above, $e^{2\pi i f_{\emptyset } (x)}$, is the usual complex exponential map that has kernel given by the sheaf of locally constant functions taking integral values $\mathbb Z$. Let suppose that $f_0 \in \ker (\exp) $, the only way for this to be true is that $\sum_{k =1}^{m-1} \frac{N(x, \theta)^k}{k!}= 0$, which in turn implies that $N(x, \theta) = 0$, indeed, multiplying on the left and on the right side by $N^{m-2}$: 
\bear
\left ( \sum_{k =1}^{m-1} \frac{N(x, \theta)^k}{k!} \right )\cdot N^{m-2} (x , \theta) = N^{m-1} (x, \theta) \neq 0. 
\eear 
 \end{proof}

\end{document}